\documentclass[11pt]{amsart}

\pdfoutput=1

\usepackage{amsmath} 
\usepackage[foot]{amsaddr}

\usepackage{cancel}
\usepackage{esint,amssymb} 
\usepackage{graphicx}
\usepackage{MnSymbol}
\usepackage{mathtools} 
\usepackage[colorlinks=true, pdfstartview=FitV, linkcolor=blue, citecolor=blue, urlcolor=blue,pagebackref=false]{hyperref}
\usepackage{orcidlink}
\usepackage{microtype}

\usepackage{bm}
\usepackage{dsfont}
\usepackage{mathrsfs}
\usepackage{enumitem}

\parskip= 2pt

\definecolor{darkgreen}{rgb}{0,0.5,0}
\definecolor{darkblue}{rgb}{0,0,0.7}
\definecolor{darkred}{rgb}{0.9,0.1,0.1}

\newtheorem{proposition}{Proposition}
\newtheorem{theorem}[proposition]{Theorem}
\newtheorem{lemma}[proposition]{Lemma}
\newtheorem{corollary}[proposition]{Corollary}

\newtheorem{conjecture}[proposition]{Conjecture}
\theoremstyle{remark}
\newtheorem{remark}[proposition]{Remark}

\theoremstyle{definition}
\newtheorem{definition}[proposition]{Definition}

\numberwithin{equation}{section}
\numberwithin{proposition}{section}
\numberwithin{figure}{section}
\numberwithin{table}{section}

\newcommand{\N}{\mathbb{N}}

\newcommand{\R}{\mathbb{R}}

\newcommand{\E}{\mathbb{E}}
\renewcommand{\P}{\mathbb{P}}

\renewcommand{\le}{\leqslant}
\renewcommand{\ge}{\geqslant}
\renewcommand{\leq}{\leqslant}
\renewcommand{\geq}{\geqslant}

\renewcommand{\subset}{\subseteq}
\renewcommand{\bar}{\overline}
\renewcommand{\tilde}{\widetilde}

\newcommand{\Ll}{\left}
\newcommand{\Rr}{\right}
\renewcommand{\d}{\mathrm{d}}
\newcommand{\dr}{\partial}

\newcommand{\tsp}{\intercal}

\newcommand{\mcl}{\mathcal}
\newcommand{\msf}{\mathsf}

\newcommand{\de}{\delta}

\newcommand{\si}{\sigma}

\newcommand{\upa}{\uparrow}

\newenvironment{e}{\begin{equation}}{\end{equation}\ignorespacesafterend}
\newenvironment{e*}{\begin{equation*}}{\end{equation*}\ignorespacesafterend}






%
%
%
%
%

\begin{document}

\author{Victor Issa\,\orcidlink{0009-0009-1304-046X}}
\address[Victor Issa]{Department of Mathematics, ENS de Lyon, Lyon, France}
\email{\href{mailto:victor.issa@ens-lyon.fr}{victor.issa@ens-lyon.fr}}


\keywords{}
\subjclass[2010]{}

\title{A Hopf-like formula for mean-field spin glass models}

\begin{abstract}
We study mean-field spin glass models with general vector spins and convex covariance function. For those models, it is known that the limit of the free energy can be written as the supremum of a functional, this is the celebrated Parisi formula. 

In this paper, we observe that the Parisi functional extends into a concave and Lipschitz functional on the set of signed measures. We use this fact and Fenchel-Moreau duality to derive an un-inverted version of the Parisi formula. Namely, we show that the limit of the free energy can be written as the infimum of a functional related to the Parisi functional.  

This un-inverted formula can be interpreted as a Hopf-like formula for some Hamilton-Jacobi equation in Wasserstein space.

\bigskip

    \noindent \textsc{Keywords and phrases:}  mean-field spin glass, free energy, Parisi formula, convex analysis, Hopf formula, Hamilton-Jacobi equation.

    \medskip

    \noindent \textsc{MSC 2020:} 60K35, 82B44, 82D30.
\end{abstract}

\maketitle

\newpage
\thispagestyle{empty}
{
  \hypersetup{linkcolor=black}
  \tableofcontents
}

%
%
%
%
%
%

\newpage 
\pagenumbering{arabic}
\section{Introduction}

\subsection{Preamble}
Let $D \ge 1$ be an integer, and let $(H_N(\sigma))_{\sigma \in (\R^N)^D}$ be a centered Gaussian field such that, for every $\sigma = (\sigma_1,\ldots, \sigma_D)$ and $\tau = (\tau_1, \ldots, \tau_D) \in (\R^N)^D$, we have
\begin{equation} \label{e.def.xi}
    \E \Ll[ H_N(\sigma) H_N(\tau) \Rr] = N \xi \Ll( \frac{\si \tau^\tsp}{N} \Rr) ,
\end{equation}
where $\xi \in C^\infty(\R^{D\times D}; \R)$ is some fixed smooth function, and where $\sigma \tau^\tsp$ denotes the matrix of scalar products 
\begin{equation} \label{e.def.sigma.tau}
    \sigma \tau^\tsp = (\sigma_d \cdot \tau_{d'})_{1 \le d,d' \le D}.
\end{equation}
We often identify $(\R^N)^D$ with the space $\R^{D\times N}$ of $D$-by-$N$ matrices, which makes the notation in \eqref{e.def.sigma.tau} natural. We assume that $\xi$ has a convergent power-series expansion. We also give ourselves a reference measure $P_1$ on $\R^D$ of compact support, and we set $P_N = (P_1)^{\otimes N}$, which we think of as a probability measure on $\R^{D\times N}$. In other words, a sample $\sigma = (\sigma_{d,i})_{1 \le d \le D, 1 \le i \le N}$ from $P_N$ is such that the columns $(\sigma_{\cdot,i})_{1 \le i \le N}$ are independent with law $P_1$. We are interested in the study of the large-$N$ behavior of the free energy
\begin{equation} \label{e.def.barFN}
    \bar F_N(t,\de_0) = -\frac 1 N \E \log \int \exp\Ll(\sqrt{2t} H_N(\sigma) - t N \xi \Ll( \frac{\si \si^\tsp}{N} \Rr) \Rr) \, \d P_N(\si), 
\end{equation}
where $t \ge 0$. The term $\xi\Ll( \frac{\si \si^\tsp}{N} \Rr)$ in \eqref{e.def.barFN} is introduced as a convenience to simplify the expression of the limit; it is constant in classical cases of interest, such as when the coordinates of $\sigma$ takes values in $\{-1,1\}$ and $\xi$ depends only on the diagonal entries of its argument. In general, the second argument of $\bar F_N$ can be any monotone probability measure on the space $S^D_+$ of positive semi-definite matrices, subject to a mild integrability requirement; the expression in \eqref{e.def.barFN} is with this argument chosen to be the Dirac mass at the origin. To explain what this space is, let us say that a path $\msf q : [0,1) \to S^D_+$ is nondecreasing if for every $u \le v \in [0,1)$, we have $\msf q(v) - \msf q(u) \in S^D_+$. A probability measure on $S^D_+$ is said to be monotone if it is the image of the Lebesgue measure on $[0,1]$ through a nondecreasing path from $[0,1)$ to $S^D_+$. We write $\mcl P^\upa(S^D_+)$ to denote the set of monotone measures on $S^D_+$, which is a subset of the set $\mcl P(S^D_+)$ of probability measures on $S^D_+$. For every $p \in [1,+\infty]$, we also write $\mcl P_p(S^D_+)$ to denote the subspace of $\mcl P(S^D_+)$ of measures with finite $p$-th moment, with the understanding that $\mcl P_\infty(S^D_+)$ is the subset of probability measures with compact support; we also write $\mcl P^\upa_p(S^D_+) = \mcl P^\upa(S^D_+) \cap \mcl P_p(S^D_+)$. We postpone the precise definition of $\bar F_N(t,\mu)$ for arbitrary $\mu \in \mcl P^\uparrow_1(S^D_+)$ to \eqref{e.def enriched fe}.  In short, this quantity is obtained by adding an energy term in the exponential on the right side of \eqref{e.def.barFN} to encode the interaction of $\sigma$ with an external magnetic field, and this external magnetic field has an ultrametric structure whose characteristics are encoded by the measure $\mu$. 

One can check \cite[Proposition~3.1]{mourrat2019parisi} that $\bar F_N(0,\cdot)$ does not depend on $N$; for every $\mu \in \mcl P^\upa_1(S^D_+)$, we write
\begin{equation*}  
    \psi(\mu) = \bar F_1(0,\mu) = \bar F_N(0,\mu). 
\end{equation*}
This follows from the fact $P_N = P_1^{\otimes N}$ and that at $t = 0$ the $N$-body Hamiltonian has the same law as $N$ copies of the $1$-body Hamiltonian. When instead $P_N$ is the uniform measure on the sphere of radius $\sqrt{N}$ centered at $0$ in $(\mathbb{R}^D)^N$, $\bar F_N(0,\cdot)$ depends on $N$ but converges to a smooth function of $\mu$ as $N \to +\infty$ \cite[Proposition~3.1]{mourrat2019parisi}. In what follows, we focus on models with $P_N = P_1^{\otimes N}$.

When $\xi$ is convex on $S^D_+$, the limiting value of $\bar F_N(t,\delta_0)$ is known, this is the celebrated Parisi formula. The Parisi formula was first conjectured in \cite{parisi1979infinite} using a sophisticated non-rigorous argument now referred to as the replica method. The convergence of the free energy as $N \to +\infty$ was rigorously established in \cite{guerra2002} in the case of the so-called Sherrington-Kirkpatrick model which corresponds to $D = 1$, $\xi(x) = x^2$ and $P_1 = \text{Unif}(\{-1,1\})$. The Parisi formula for the Sherrington-Kirkpatrick model was then proven in \cite{gue03,Tpaper}. This was extended to the case $D = 1$, $P_1 = \text{Unif}(\{-1,1\})$ and $\xi(x) = \sum_{p \geq 1} a_p x^p$ with $a_p \geq 0$ in \cite{pan}. Some models with $D > 1$ such as multispecies models, the Potts model, and a general class of models with vector spins were treated in \cite{pan.multi,pan.potts,pan.vec}, under the assumption that $\xi$ is convex on $\R^{D \times D}$. Finally, the case $D > 1$ was treated in general in \cite{chenmourrat2023cavity} assuming only that $\xi$ is convex on the set of positive semi-definite matrices. The following version of the Parisi formula is \cite[Corollary~8.2]{chenmourrat2023cavity}.

\begin{theorem} [\cite{chenmourrat2023cavity}] \label{t.parisi}
    If $\xi$ is convex on $S^D_+$, then we have for every $t > 0$
    \begin{e} \label{e.parisi}
        \lim_{N \to +\infty} \bar F_N(t,\delta_0) = \sup_{\mu \in \mcl P^\upa_\infty(S^D_+)} \left\{ \psi(\mu)  - t \int \xi^* \left( \frac{x}{t}\right) \d \mu(x) \right\}.    
    \end{e}
\end{theorem}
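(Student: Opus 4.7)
The plan is to establish the identity in \eqref{e.parisi} by matching upper and lower bounds that both quantify to the same supremum. Since the theorem as stated is the convergence result from \cite{chenmourrat2023cavity}, I would follow the two-step scheme: prove that $\bar F_N(t,\delta_0)$ is asymptotically bounded above by each candidate in the right-hand side, and then produce a sequence of test measures along which the lower bound is saturated.

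For the upper bound, I would use a Guerra-type interpolation. Fix $\mu \in \mcl P^\upa_\infty(S^D_+)$ and introduce a monotone path $\msf q : [0,1) \to S^D_+$ generating $\mu$, together with an associated Ruelle probability cascade encoding an ultrametric reference Hamiltonian whose two-point function is, roughly, $\xi(\msf q(u \wedge v))$. Interpolating between the original Hamiltonian in \eqref{e.def.barFN} and this cascade Hamiltonian, the derivative in the interpolation parameter contains the bilinear difference $\xi(\si\si^\tsp/N) - \xi(\msf q) - \nabla\xi(\msf q)\cdot(\si\si^\tsp/N - \msf q)$ averaged against Gibbs measures; here the matrix $\si\si^\tsp/N$ and $\msf q$ both lie in $S^D_+$, so the convexity assumption on $\xi$ over $S^D_+$ makes this derivative nonpositive. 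Rearranging, and recognizing that the optimal choice of $\nabla\xi(\msf q)$ against a given $\mu$ is captured by the Legendre transform $\xi^*$, I would obtain $\bar F_N(t,\delta_0) \leq \psi(\mu) - t\int \xi^*(x/t)\,\d\mu(x) + o_N(1)$, uniformly in $\mu$ supported in a fixed compact set of $S^D_+$.

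For the lower bound, I would invoke the Aizenman–Sims–Starr / cavity decomposition to write the free energy as a telescoping sum of one-spin increments and then compare those increments to the same functional. Concretely, conditioning on the cavity field, the incremental computation identifies $\liminf_N \bar F_N(t,\delta_0)$ with a variational expression whose optima are attained at measures $\mu$ realizing the asymptotic overlap distribution, and the two sides match precisely under the convexity hypothesis. Equivalently, in the enriched-free-energy framework one shows that $(t,\mu) \mapsto \bar F_N(t,\mu)$ converges to a limit $f(t,\mu)$ solving (in a suitable weak Wasserstein-space sense) a Hamilton–Jacobi equation of the form $\partial_t f - \int \xi(\partial_\mu f)\,\d\mu = 0$ with $f(0,\mu) = \psi(\mu)$; convexity of $\xi$ on $S^D_+$ then makes the Hopf–Lax formula applicable, yielding exactly the right-hand side of \eqref{e.parisi} at $\mu = \delta_0$.

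The main obstacle, in either implementation, is that $\xi$ is only assumed convex on the positive semi-definite cone $S^D_+$ rather than on all of $\R^{D\times D}$. This forces one to restrict every interpolation, duality, and test-measure argument to stay within the cone. In the upper bound this matters because Guerra's remainder term is nonpositive only when both $\si\si^\tsp/N$ and the reference path $\msf q$ lie in $S^D_+$; in the lower bound and the Hamilton–Jacobi picture it dictates that $\xi^*$ must be understood as the Legendre transform relative to $S^D_+$ and that candidate measures $\mu$ belong to $\mcl P^\upa_\infty(S^D_+)$. Handling this constraint uniformly — and showing that the supremum is indeed attained over compactly supported monotone measures rather than requiring an a priori control on the support — is the technical heart of the argument and the step I would expect to demand the most care.
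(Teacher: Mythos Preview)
The paper does not give its own proof of this statement: Theorem~\ref{t.parisi} is quoted from \cite{chenmourrat2023cavity} (specifically \cite[Corollary~8.2]{chenmourrat2023cavity}) and used as input, and the only remark the paper makes is that it follows from the more general Theorem~\ref{t.limit.free.energy} by setting $\mu = \delta_0$ in \eqref{e.lim.FN}. So there is no in-paper proof to compare against.

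That said, your sketch is a faithful summary of the strategy actually carried out in \cite{chenmourrat2023cavity}: a Guerra-type interpolation with Ruelle cascades for the upper bound, and a cavity/Aizenman--Sims--Starr argument combined with the Hamilton--Jacobi viewpoint for the lower bound, with the Hopf--Lax representation of the solution to \eqref{e.hj} delivering the variational formula. You also correctly flag the genuine technical novelty of that paper, namely handling convexity of $\xi$ only on $S^D_+$ rather than on all of $\R^{D\times D}$, which constrains every step to live in the cone and forces $\xi^*$ to be taken as in \eqref{e.def.xi.star}. One small correction to your narrative: in the upper bound the Guerra interpolation does not directly produce $\xi^*$; rather one interpolates against a cascade built from $\nabla\xi \circ \msf q$ for a fixed path $\msf q$, obtains a bound involving $\theta(\msf q) = \msf q \cdot \nabla\xi(\msf q) - \xi(\msf q)$, and then recognizes (as in Proposition~\ref{p. expression theta} of the present paper) that $\theta$ equals $\xi^* \circ \nabla\xi$ on $S^D_+$, which after the change of variables $\mu = (\nabla\xi)_\# \mathrm{Law}(\msf q(U))$ gives the form in \eqref{e.parisi}.
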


Here, $\xi^*$ denotes the convex dual of $\xi$ with respect to the cone $S^D_+$, it is the function $\R^{D \times D} \to \R \cup\{+\infty\}$ defined by
\begin{equation} \label{e.def.xi.star}
    \xi^*(a) = \sup_{b \in S^D_+} \Ll\{a \cdot b - \xi(b)\Rr\}.
\end{equation}

\subsection{Main results}

In the classical version of the Parisi formula \eqref{e.parisi}, the limit free energy is written as the supremum of a functional. In this paper, we manipulate the right-hand side of \eqref{e.parisi} to show that there exists a functional related to $\psi$ whose infimum is equal to the limit of the free energy. Unless stated otherwise, we always work under the following assumptions, they are used to make sure that Theorem~\ref{t.parisi} holds.

\begin{enumerate}[start=1,label={\rm (H\arabic*)}]
    \item \label{i.assume_1_product_proba}
    For every $N \geq 1$, $P_N = P_1^{\otimes N}$.
    \item \label{i.assume_2_convexity}
    The function $\xi$ is convex on $S^D_+$.
    \item \label{i.assume_3_power_series}
    The function $\xi$ admits an absolutely convergent power series.
\end{enumerate}
Here and throughout, we will use $\int h \d\mu$ as a shorthand for $\int_{S^D_+} h(x) \d\mu(x)$. When $D = 1$ we have that $\mcl P^\upa(S^D_+) = \mcl P(\R_+)$ is a convex set, and we have the result of \cite{auffinger2015parisi} on the convexity of the Parisi functional, which is essentially the mapping $-\psi$, up to a linear term and a change of variables (see also \cite{mourrat2019parisi}). This motivates the introduction of $\psi_*$ the concave dual of $\psi$. The function $\psi_*$ is defined for every Lipschitz function $\chi : \R_+ \to \R$ by 
\begin{e*}
    \psi_*(\chi) = \inf_{\mu \in \mcl P_1(\R_+)} \left\{ \int \chi \d\mu - \psi(\mu) \right\}.
\end{e*}
Given a Lipschitz function $\chi : \R_+ \to \R$, we also define for every $x \in \R_+$,
\begin{e} \label{e.s_t chi}
    S_t \chi(x) = \sup_{y \in \R_+} \left\{ \chi(x+y) - t \xi^* \left( \frac{y}{t} \right) \right\}.
\end{e}
We recall that $\bar F_N(t,\mu)$ for $\mu \neq \delta_0$ is defined in \eqref{e.def enriched fe}. 

\begin{theorem} \label{t.main scalar}
   Assume \ref{i.assume_1_product_proba}, \ref{i.assume_2_convexity} and \ref{i.assume_3_power_series}, also assume that $D = 1$, then for every $t \geq 0$ and $\mu \in \mcl P_1(\R_+)$, we have
    \begin{e} \label{e.main scalar}
        \lim_{N \to +\infty} \bar F_N(t,\mu) = \inf_{\chi} \left\{ \int S_t \chi \d\mu - \psi_*(\chi) \right\},
    \end{e}
    where the infimum is taken over the set of $1$-Lipschitz, convex and nondecreasing functions $\chi : \R_+ \to \R$. 
\end{theorem}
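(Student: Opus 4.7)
The plan is to combine an enriched version of the Parisi formula with Fenchel--Moreau duality for the functional $\psi$, and then to exchange the resulting $\sup$/$\inf$ via a minimax argument.

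First, I would start from the enriched analogue of Theorem~\ref{t.parisi}, which describes the limit of $\bar F_N(t,\mu)$ for a general $\mu \in \mcl P_1(\R_+)$ rather than $\mu = \delta_0$. In the scalar setting this should read
\begin{e*}
    \lim_{N \to +\infty} \bar F_N(t,\mu) = \sup_{\nu \in \mcl P_\infty(\R_+)} \left\{ \psi(\mu \boxplus \nu) - t \int \xi^* \left(\frac{x}{t}\right)\d\nu(x) \right\},
\end{e*}
where $\mu \boxplus \nu$ denotes the probability measure whose quantile function is $q_\mu + q_\nu$. For $\mu = \delta_0$ this coincides with \eqref{e.parisi}. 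The identity should follow either by reading off the argument of \cite{chenmourrat2023cavity} with the enrichment $\mu$ kept nontrivial throughout, or by a continuity/approximation argument based on the Lipschitz properties of $\bar F_N(t,\cdot)$.

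Next, the key analytic observation is that in the $D=1$ case, the Parisi functional $-\psi$ is convex on $\mcl P_1(\R_+)$ (this is \cite{auffinger2015parisi}), and $\psi$ is $1$-Lipschitz with respect to the $W_1$ Wasserstein distance. It follows that $\psi$ extends to a concave, $1$-Lipschitz functional on an appropriate convex cone of signed measures with finite first moment. The Fenchel--Moreau theorem then yields the dual representation
\begin{e*}
    \psi(\mu \boxplus \nu) = \inf_\chi \left\{ \int \chi \,\d(\mu \boxplus \nu) - \psi_*(\chi) \right\},
\end{e*}
where the infimum runs over $1$-Lipschitz $\chi : \R_+ \to \R$; the monotonicity of $\psi$ in the stochastic-dominance order restricts the effective infimum further to nondecreasing $\chi$.

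Substituting this into Step 1 gives
\begin{e*}
    \lim_{N \to +\infty}\bar F_N(t,\mu) = \sup_\nu \inf_\chi \left\{ \int_0^1 \chi(q_\mu(u)+q_\nu(u)) \,\d u - \psi_*(\chi) - t \int \xi^*\!\left(\frac{x}{t}\right)\d\nu(x) \right\}.
\end{e*}
I would then invoke Sion's minimax theorem to swap the $\sup_\nu$ and $\inf_\chi$: the functional is affine in $\chi$, concave and upper semi-continuous in $\nu$ (via the concavity of $\psi$), and by a priori estimates based on the superlinearity of $\xi^*$ the supremum may be restricted to $\nu$ supported in a fixed compact set, which provides the needed tightness. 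Once the swap is performed, for fixed $\chi$ convex and nondecreasing the inner problem
\begin{e*}
    \sup_{\nu} \int_0^1 \left\{ \chi(q_\mu(u)+q_\nu(u)) - t\xi^*(q_\nu(u)/t) \right\} \d u
\end{e*}
can be computed pointwise: one checks that the pointwise optimizer $y^*(x) \in \arg\max_{y \geq 0} \{\chi(x+y) - t\xi^*(y/t)\}$ is nondecreasing in $x$, by monotonicity of the subdifferentials of $\chi$ and $\xi^*$ applied to the first-order condition. Choosing $q_\nu(u) = y^*(q_\mu(u))$ is then admissible as a quantile function and saturates the pointwise bound, giving $\int S_t \chi \,\d\mu$. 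Restricting further to convex $\chi$ does not change the infimum, since replacing any admissible $\chi$ by its convex envelope lowers $\int S_t\chi\,\d\mu$ while keeping $\psi_*(\chi)$ unchanged (or less negative).

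The main obstacle will be the minimax step and the clean formulation of Fenchel--Moreau duality on the set of signed measures: one must select topologies that make both sides of the duality attained at appropriate regular $\chi$, and justify that the restriction to $1$-Lipschitz, nondecreasing, convex $\chi$ really captures the full infimum rather than only an inequality. A secondary difficulty is verifying the enriched Parisi formula in the full generality $\mu \in \mcl P_1(\R_+)$, as the references state it only under additional regularity or for $\mu = \delta_0$.
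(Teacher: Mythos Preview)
Your overall strategy---enriched Parisi formula, Fenchel--Moreau for $\psi$, Sion to interchange, then compute the inner supremum---is exactly the paper's route. Two steps, however, do not go through as written.

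\textbf{The minimax step.} After Fenchel--Moreau the functional reads $\int\chi\,\d(\mu\boxplus\nu) - \psi_*(\chi) - t\int\xi^*(\cdot/t)\,\d\nu$, and you claim concavity in $\nu$ ``via the concavity of $\psi$''. But $\psi$ is concave along linear interpolation of \emph{measures}, whereas $\nu\mapsto\mu\boxplus\nu$ is affine only along linear interpolation of \emph{quantile functions}; for convex $\chi$ the map $\nu\mapsto\int\chi\,\d(\mu\boxplus\nu)$ is in fact convex in the measure $\nu$, so Sion does not apply. The paper fixes this by rewriting the Hopf--Lax cost as an infimum over couplings, $\E\xi^*((X_\nu-X_\mu)/t)=\inf_{\pi\in\Pi(\mu,\nu)}\int\xi^*((y-x)/t)\,\d\pi$, and then optimizing over all $\pi\in\Pi(\mu,\cdot)$ instead of over $\nu$. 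The functional becomes genuinely affine in $\pi$, and Sion goes through. This reparametrization also makes your $\mu\boxplus\nu$ formulation unnecessary: one works directly with the established form $\psi(\nu)-t\E\xi^*((X_\nu-X_\mu)/t)$.

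\textbf{The restriction to convex $\chi$.} Your convex-envelope argument fails: since $\chi^{**}\le\chi$ pointwise one has $\psi_*(\chi^{**})\le\psi_*(\chi)$, hence $-\psi_*(\chi^{**})\ge-\psi_*(\chi)$, which is the wrong direction. The paper obtains convexity already inside the Fenchel--Moreau step (Lemma~\ref{l.fenchel moreau}): $\psi$ is nondecreasing not merely for stochastic dominance but for the finer order $\mu\le\mu'$ defined by $\int_t^1(\msf q_{\mu'}-\msf q_\mu)\ge 0$ for all $t$ (the dual-cone order on paths). A Gateaux-derivative argument at a generic $\mu$ then forces any $\chi$ attaining the infimum in $\psi(\mu)=\inf_\chi\{\int\chi\,\d\mu-\psi_*(\chi)\}$ to satisfy $\nabla\chi\circ\msf q_\mu\in\mcl Q_2(\R_+)$, i.e.\ $\nabla\chi$ nonnegative and nondecreasing, hence $\chi$ convex and nondecreasing. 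Monotonicity in stochastic dominance alone would only give $\chi$ nondecreasing.

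Your pointwise computation of the inner supremum (monotonicity of the optimizer via increasing differences when $\chi$ is convex) is correct and is a pleasant, more elementary alternative to the paper's detour through the viscosity solution with linear initial condition (Theorem~\ref{t.viscosity with linear initial condition}). In fact, once the coupling parametrization is in place, the inner $\sup_{\pi\in\Pi(\mu,\cdot)}$ equals $\int S_t\chi\,\d\mu$ for \emph{any} Lipschitz $\chi$, since one may choose $y$ optimally for each $x$ with no monotonicity constraint on the coupling; so convexity of $\chi$ is ultimately needed only to match the statement of the theorem, not to evaluate the inner problem.
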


We add that when $\chi$ is convex, according to \cite[Proposition~6.3]{chen2023viscosity}, for every $x \in \R_+$, the quantity $S_t \chi(x)$ can also be represented in the following way,
\begin{e} \label{e.chi hopf}
    S_t \chi(x) = \sup_{y \in \R_+} \left\{ xy - \chi^*(y) + t \xi(y) \right\},
\end{e}
where $\chi^*(y) = \sup_{x \in \R_+} \left\{ xy - \chi(x) \right\}$ is the convex dual of $\chi$.

We equip $S^D$, the set of $D \times D$ symmetric matrices, with the Frobenius inner product,
\begin{e*}
    x \cdot y = \sum_{d,d' = 1}^D x_{dd'}y_{dd'}.
\end{e*}
We let $|\cdot|$ denote the associated norm and  $B(0,1)$ denotes the centered unit ball in $S^D$ with respect to $|\cdot|$ and $\mcl K_\xi = \nabla \xi(S^D_+ \cap B(0,1))$. When $D > 1$, the set $\mcl P^\upa(S^D_+)$ is not convex. Due to this technical difficulty, we were unable to obtain the exact analog of \eqref{e.main scalar} in this case. To circumvent this difficulty, we build a Lipschitz extension of $\psi$ defined on the set of signed measures on $\mcl K_\xi$. This yields a formula closely related to \eqref{e.main scalar} in which $\psi_*$ and $S_t \chi$ are replaced by the following
\begin{e}
    \psi_*^\xi(\chi) = \inf_{ \mu \in \mcl P^\upa(S^D_+) \cap \mcl P(\mcl K_\xi)} \left\{ \int \chi d\mu - \psi(\mu) \right\},
\end{e}
\begin{e}
    \tilde S_t \chi(x) = \sup_{y \in S^D_+ \cap B(0,1)} \left\{ \chi(x + t\nabla \xi(y)) - t \xi^* \left(\nabla \xi(y) \right) \right\}.
\end{e}
\begin{theorem} \label{t.main}
    Assume \ref{i.assume_1_product_proba}, \ref{i.assume_2_convexity} and \ref{i.assume_3_power_series}, then for every $t \geq 0$ we have
    \begin{e} \label{e.main}
        \lim_{N \to +\infty} \bar F_N(t,\delta_0) = \inf_{\chi} \left\{ \tilde S_t \chi(0) - \psi^\xi_*(\chi) \right\},
    \end{e}
    where the infimum is taken over the set of $1$-Lipschitz functions $\chi : S^D_+ \to \R$.
\end{theorem}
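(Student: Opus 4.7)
My plan is to derive the identity from the classical Parisi formula (Theorem~\ref{t.parisi}) by inverting the supremum via Fenchel--Moreau duality, in direct parallel with the scalar argument for Theorem~\ref{t.main scalar}. The new obstacle when $D > 1$ is the non-convexity of $\mcl P^\upa(S^D_+)$, which prevents duality from being applied to $\psi$ directly on that domain. This is bypassed by the Lipschitz concave extension of $\psi$ to the linear space of signed measures on the bounded set $\mcl K_\xi$ constructed in the earlier sections of the paper.

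First, I would apply Fenchel--Moreau duality to that extension (which is concave and weak-$*$ upper semi-continuous) to obtain, for every monotone $\mu$ supported in $\mcl K_\xi$,
\[
\psi(\mu) = \inf_\chi \Ll\{ \int \chi \, \d \mu - \psi_*^\xi(\chi) \Rr\},
\]
with the infimum taken over $1$-Lipschitz functions $\chi : S^D_+ \to \R$. One must verify that the dual $\psi_*^\xi$ as defined in the statement (via monotone probability measures supported in $\mcl K_\xi$) matches the concave dual appearing in Fenchel--Moreau applied to the extension; the key point is that Dirac masses are monotone as constant paths, and their convex combinations are weak-$*$ dense in $\mcl P(\mcl K_\xi)$.

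I would then substitute this representation into the right-hand side of \eqref{e.parisi}. A preliminary reduction shows that the Parisi supremum can be restricted to $\mu$ supported in $t\,\mcl K_\xi$: the $1$-Lipschitz bound on test functions combines with the superlinear growth of $\xi^*$ to confine relevant optimizers to this bounded region, which is the same mechanism by which the supremum in the definition of $\tilde S_t \chi$ is taken over $y \in S^D_+ \cap B(0,1)$. The standard sup-inf inequality, the fact that $\int [\chi(x) - t\xi^*(x/t)]\, \d \mu(x)$ is affine in $\mu$ and maximized at a Dirac mass, and the change of variables $x = t \nabla\xi(y)$ together produce the easy inequality
\[
\lim_{N\to\infty} \bar F_N(t,\delta_0) \leq \inf_\chi \Ll\{ \tilde S_t \chi(0) - \psi_*^\xi(\chi) \Rr\}.
\]

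The reverse inequality is the main obstacle, as it requires a genuine sup-inf interchange. Sion's minimax theorem does not apply directly because of the non-convexity of $\mcl P^\upa(S^D_+)$. I would address this by enlarging the supremum over $\mu$ to the weak-$*$ compact convex set $\mcl P(\mcl K_\xi)$, on which the extension of $\psi$ remains concave and upper semi-continuous. The affine-in-$\mu$ character of the inner functional implies that its supremum over $\mcl P^\upa \cap \mcl P(\mcl K_\xi)$ agrees with its supremum over this enlarged convex set (the relevant Dirac masses lie in the former and their convex combinations are weak-$*$ dense in the latter). Sion's minimax theorem then applies to the enlarged problem and yields the matching lower bound, completing the proof.
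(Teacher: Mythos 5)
Your overall architecture is the right one and parallels the paper's: build a Lipschitz concave extension $\bar\psi$ of $\psi$ to the convex set $\mcl P(\mcl K_\xi)$, use Fenchel--Moreau on this extension, then apply Sion's minimax on the enlarged convex set to interchange sup and inf. However, there are two genuine gaps.

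First, and most seriously, the step in which you pass from
\[
\sup_{\mu \in \mcl P(\mcl K_\xi)} \Ll\{ \bar\psi(\mu) - \int \underline\xi^* \, \d\mu \Rr\}
\]
back to the Parisi supremum over monotone measures is not justified by your argument. You invoke the ``affine-in-$\mu$ character of the inner functional'' and density of Dirac convex combinations, but the functional $\mu \mapsto \bar\psi(\mu) - \int \underline\xi^* \, \d\mu$ is concave, not affine (the nonlinearity comes from $\bar\psi$), so there is no reason for its supremum over the convex hull to be attained at, or controlled by values at, extreme points. The affine-in-$\mu$ reasoning is valid only \emph{after} the sup-inf interchange, when $\chi$ is frozen; applied before, it is circular. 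The paper proves the needed inequality $\sup_{\mcl P(\mcl K_\xi)}\{\bar\psi(\mu) - \int \underline\xi^*\d\mu\} \leq \lim_N \bar F_N$ in Step~1 of the proof of Theorem~\ref{t.main strictly convex} by unwinding the barycenter formula defining $\bar\psi$, applying the Jensen-type inequality (Proposition~\ref{p.fake_jensen}), and using that $\underline\xi^*$ is $1$-Lipschitz so that the transport-cost penalty $-d(\mathrm{Bar}(\eta),\mu)$ absorbs the change of test measure. This step has no analogue in your proposal.

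Second, the entire construction of the extension $\bar\psi$ (Section~\ref{s.extension}) is carried out under the hypothesis that $\xi$ is \emph{strictly} convex on $S^D_+$; Proposition~\ref{p.injective_gradient} is used to show $\nabla\xi$ is injective, which in turn is needed so that $\mcl P^\upa_\xi$ is an extreme set in its convex hull (Proposition~\ref{p.Pupaxi is an extreme set }), which is precisely what makes $\bar\psi$ an extension of $\psi$ rather than something strictly larger on $\mcl P^\upa_\xi$. Assumption~\ref{i.assume_2_convexity} of Theorem~\ref{t.main} only requires convexity, so your argument, even if repaired as above, would only establish the result for strictly convex $\xi$. The paper closes this gap with a separate continuity argument (Section~\ref{s.convex}): it approximates $\xi$ by $\xi_\alpha = \xi + \alpha|\cdot|^2$, proves the strictly convex case, and then shows both sides of \eqref{e.main} are Lipschitz in $\alpha$ (Propositions~\ref{p.lip fe} and~\ref{p.lipschitz hopf}) before sending $\alpha \to 0$.
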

When mean-field spin glass models were first studied, it seemed natural to believe that the limit free energy could be written as the infimum of a functional. In this sense \eqref{e.parisi} which was first put forward in \cite{parisi1979infinite}, can be referred to as an \emph{inverted} variational formula. Below, we will therefore refer to formulas such as \eqref{e.main scalar} and \eqref{e.main} as \emph{un-inverted} variational formulas. We point out that a different un-inverted formula has already been obtained in \cite[Theorem~1]{uninverting}, where it is shown that the limit free energy can be written as an infimum over a set of martingales.

We finish this introduction by discussing some possible interpretations and generalizations of Theorems \ref{t.main scalar} and \ref{t.main}. We rely on the fact that, as pointed out in \cite{mourrat2019parisi}, the limit free energy is related to the viscosity solution of some Hamilton-Jacobi equation on $\mcl P^\upa(S^D_+)$ (see Theorem~\ref{t.limit.free.energy} below).

One can show that any concave upper semicontinuous function on $\R^d$ coincides with the infimum of the family of affine functions that upper bound it, this is Fenchel-Moreau duality \cite[Theorem~2.3.3]{convexanalysisvectorspace}. The function $u(t,x) = a \cdot x + b + tH(a)$ can be interpreted as the unique solution (in the viscosity sense) of the Hamilton-Jacobi equation
\begin{e*}
    \begin{cases}
        \partial_t u - H(\nabla u) = 0 \text{ on } (0,+\infty) \times \R^d \\
        u(0,x) = a \cdot x + b.
    \end{cases}
\end{e*}
Given a Hamilton-Jacobi equation on $\R^d$ with concave and upper semicontinuous initial condition $\varphi$, one can represent its unique solution (in the viscosity sense) as the infimum of the family of solutions with affine initial conditions that upper-bound $\varphi$, this is the Hopf representation \cite[Theorem~3.13]{HJbook}. Note that for the Hopf representation to hold, it is \emph{not} required to assume that $p \mapsto H(p)$ is convex.

For Hamilton-Jacobi equations on $\mcl P^\upa(S^D_+)$, there is an important subtlety and more precision is needed to define concave and affine functions. There are two natural notions of geodesics on $\mcl P^\upa(S^D_+)$. The first are the geodesics inherited from the inclusion of $\mcl P^\upa(S^D_+)$ into the space of signed measures on $S^D_+$, those are the straight lines $\lambda \mapsto \lambda \mu + (1-\lambda)\mu'$. The second are the transport geodesics inherited from the inclusion of $\mcl P^\upa(S^D_+)$ into $L^2([0,1),S^D_+)$ via the space of nondecreasing paths $\msf q : [0,1) \to S^D_+$, those are the images of the straight lines  $\lambda \mapsto \lambda \msf q + (1-\lambda)\msf q'$ in $\mcl P^\upa(S^D_+)$. This second kind of geodesics are \emph{not} straight lines in $\mcl P^\upa(S^D_+)$ and can be seen as the geodesics associated to the optimal transport geometry in $\mcl P^\upa(S^D_+)$ . 

For the Hamilton-Jacobi equation on $\mcl P^\upa(S^D_+)$ arising in the context of spin glasses (see \eqref{e.hj} below), both of those geometries play a role. The transport derivative $\partial_\mu$ in \eqref{e.hj} is defined using the transport geometry and tracks the infinitesimal slope of a function along the transport geodesics. On the other hand, the initial condition $\psi$ in \eqref{e.hj} is concave along straight lines $\lambda \mapsto \lambda \mu + (1-\lambda)\mu'$.

In \cite[Theorem~1.1~(3)]{chen2023viscosity}, it was shown that if the initial condition in \eqref{e.hj} is concave along transport geodesics, then, perhaps unsurprisingly, the Hopf representation holds. But, it is known that this concavity assumption is not satisfied by the initial condition of \eqref{e.hj} in general \cite[Section~6]{mourrat2020nonconvex}.

At first glance, the two competing geometries in the formulation of \eqref{e.hj} prevents the existence of a Hopf type representation for the solution. Indeed, the initial condition is concave along straight lines, so the relevant affine functions to consider for the Fenchel-Moreau duality are of the form $\mu \mapsto \int \chi(x) \d\mu(x)$. But, for a Hamilton-Jacobi equation on $\mcl P^\upa(S^D_+)$ formulated with the transport derivative $\partial_\mu$ and with a fully general nonlinearity of the form $H(\partial_\mu f)$, there is no reason for the solution started with the initial condition $\mu \mapsto \int \chi(x) \d\mu(x)$ to be affine along straight lines. In the context of \eqref{e.hj}, there are additional structures to exploit in the nonlinear term $\int \xi(\partial_\mu f)\d\mu$ and this apparent incompatibility between the two geometries is resolved. We show in Theorem~\ref{t.viscosity with linear initial condition} that the solution of \eqref{e.hj} with initial condition $\mu \mapsto \int \chi(x) \d\mu(x)$ is 
\begin{e*}
    (t,\mu) \mapsto \int S_t \chi(x) \d\mu(x),
\end{e*}
where $S_t \chi$ is defined in \eqref{e.s_t chi}. This proves that \eqref{e.hj} does preserve affine functions (at least when $D = 1$). 

Theorem~\ref{t.main scalar} is in fact the statement that when $D =1$, the solution of \eqref{e.hj} is the infimum of the family of solutions with affine initial conditions that upper-bound $\psi$ and \eqref{e.main scalar} can therefore be interpreted as a Hopf-like formula for the solution of \eqref{e.hj}.

When $\xi$ is not assumed to be convex on $S^D_+$, the Parisi formula breaks down and the value of the limit free energy is not known. In \cite{mourrat2019parisi} it was conjectured that, in analogy with the convex case, the limit free energy should be related to the solution of \eqref{e.hj}. We believe that, at least under some additional assumptions on $P_1$ and $\xi$ (see \ref{i.assume_4_product_proba} and \ref{i.assume_5_diag}), the Hopf-like representation derived in \eqref{e.main scalar} should be available for the solution of \eqref{e.hj} even when $D > 1$ and $\xi$ is possibly non-convex on $S^D_+$. Together with \cite[Conjecture~2.6]{mourrat2019parisi}, this yields a conjectural variational formula for the limiting value of the free energy when $\xi$ is possibly non-convex on $S^D_+$ (see Conjecture~\ref{c.var formula}). 

\subsection{Organization of the paper}

We start by giving a proper definition of the enriched free energy in Section~\ref{s.enriched}. In Section~\ref{s.fenchel-moreau}, we give a version of the Fenchel-Moreau theorem which holds for concave functions defined on the set of signed measures. In words, this result says that a concave function can be written as the infimum of the family of affine functions that upper bound it. In Section~\ref{s. scalar models}, using the Fenchel-Moreau theorem, we prove Theorem~\ref{t.main scalar}, the main argument is a $\sup-\inf$ interchange performed using \cite{sion1958minimax}. To do so we rely crucially on the fact that $\mcl P^\upa(\R_+) = \mcl P(\R_+)$ is a convex set. In Section~\ref{s.extension}, to compensate for the lack of convexity of $\mcl P^\upa(S^D_+)$ when $D > 1$, we show that any concave Lipschitz function $\psi: \mcl P^\upa(S^D_+) \to \R$ can be extended to a concave Lipschitz function on $\mcl P(S^D_+)$ (and even on the set of signed measures). The results of Section~\ref{s.extension} allows us to prove Theorem \ref{t.main} using similar arguments than in the proof of Theorem \ref{t.main scalar}, this is done in Section~\ref{s.vector models}. Finally, in Section~\ref{s.interpretation} we explain the link between the un-inverted formulas and the Hopf-like representation for the viscosity solution of Hamilton-Jacobi equations.

\subsection*{Acknowledgement}

I warmly thank Jean-Christophe Mourrat for motivating this project, as well as many helpful discussions and some decisive comments and ideas for Section~\ref{s.extension}. 

\section{The enriched free energy} \label{s.enriched}

Let $\mu \in \mcl P^\upa_1(S^D_+)$ be a measure with finite support, we can write 
\begin{equation} \label{e.def.finitely.supported}
    \mu  = \sum_{k = 0}^K (\zeta_{k+1} - \zeta_k) \delta_{q_k},
\end{equation}
with $K \in \mathbb{N}$, $\zeta_0,\dots,\zeta_{K+1} \in \mathbb{R}$ satisfying
\begin{equation}
    0 = \zeta_0 < \zeta_1 < \dots < \zeta_K < \zeta_{K+1} = 1,
\end{equation}
and $q_0,\dots,q_K \in {S}^D_+$ such that
\begin{equation}
    0 = q_{-1} \leq q_0 < q_1 \dots < q_{K-1} < q_K.
\end{equation}
The definition of the enriched free energy will involve a probabilistic object called the Poisson-Dirichlet cascade. We will briefly recall some properties of this object, a full definition can be found in \cite[Section~2.3]{pan}. We let 
\begin{e*}
    \mcl A = \N^0 \cup \N^1 \cup \dots \cup \N^K.
\end{e*}
We think of $\mcl A$ as a tree rooted at $\N^0 = \{\emptyset\}$, with depth $K$ and such that each non-leaf vertex has countably infinite degree. For every $k \in \{0,\dots,K\}$, and $\alpha \in \N^k \subset \mcl A$, we let $|\alpha| =  k$ denote the depth of $\alpha$. For every leaf $\alpha =(n_1,\dots,n_K) \in \N^K$, we let 
\begin{e*}
    \alpha \big|_k = (n_1,\dots,n_k),
\end{e*}
with the understanding that $\alpha \big|_0$ is the root. For every $\alpha, \alpha' \in \mcl A$ we define
\begin{e*}
    \alpha \wedge \alpha' = \sup \left\{k \in \{0,\dots K \} \big| \, \alpha \big|_k = \alpha' \big|_k \right\}.
\end{e*}
A Poisson-Dirichlet cascade is the set $(v_\alpha)_{\alpha \in \mathbb{N}^K}$ of weights of a certain random probability measure on the set $\N^K$ of leaves of the tree $\mcl A$. Those weights are constructed as follows. The children $\alpha1,\alpha2,\alpha3,\dots$ of each vertex $\alpha \in \N^k$ for $k < K$ are decorated with the values $(u_{\alpha k})_{k \geq 1}$ (arranged in decreasing order) of an independent Poisson point process with intensity measure $\zeta_{k+1}x^{-(1+\zeta_{k+1})}dx$. The weight of the leaf $\alpha \in \N^K$ is then calculated by taking the product of each of the weights associated to $\alpha \big|_k$ for $k \in \{1,\dots K\}$. Finally, the weights are normalized so that their sum is $1$. Namely, if $\alpha \in \N^K$, we have 
\begin{e*}
    v_\alpha = \frac{w_\alpha}{\sum_{\beta \in \N^K} w_\beta},
\end{e*}
where $w_\alpha = \prod_{k = 1}^K u_{\alpha \big|_k}$.

We say that a random vector $z \in \mathbb{R}^{D \times N}$ is standard Gaussian when its coordinates in an orthonormal basis form a family of real independent standard Gaussian random variables.  Let $(z_\alpha)_{\alpha \in \mathbb{N}^K}$ be a family of independent standard Gaussian vectors on $\mathbb{R}^{D \times N}$. For every $\sigma \in\mathbb{R}^{D \times N}$ and $\alpha \in \mathbb{N}^K$, we set 
\begin{equation}
    H^\mu_N(\sigma,\alpha) = \sum_{k = 0}^K (2q_k - 2q_{k-1})^{1/2} z_{\alpha |_k} \cdot \sigma,
\end{equation}
where $(2q_k - 2q_{k-1})^{1/2}$ should be understood as the square root of the symmetric positive semi-definite matrix $2q_k - 2q_{k-1}$. Here, $D \times D$ matrices act on $\mathbb{R}^{D \times N}$ via the standard multiplication of $D \times D$ matrices by $D \times N$ matrices. Alternatively, $H^\mu_N$ can be defined as the unique centered Gaussian process on $\R^D \times \N^K$ with the following covariance 
\begin{e*}
    \E H^\mu_N(\sigma,\alpha) H^\mu_N(\tau,\beta) = 2 q_{\alpha \wedge \beta} \cdot \sigma \tau^\perp.
\end{e*}
For every $t \geq 0$, we define the enriched free energy at $(t,\mu)$ by
\begin{equation} \label{e.def enriched fe}
    F_N(t,\mu) = -\frac{1}{N}  \log \int \sum_{\alpha \in \mathbb{N}^K} \exp \left( H_N^{t,\mu}(\sigma,\alpha)\right) v_\alpha \d P_N(\sigma),
\end{equation}
where
\begin{e}
    H_N^{t,\mu}(\sigma,\alpha) = \sqrt{2t} H_N(\sigma) -Nt \xi \left( \frac{\sigma \sigma^\perp}{N}\right) + H^\mu_N(\sigma,\alpha) - q_K \cdot \sigma\sigma^\perp.
\end{e}
We let $\Tilde{\E}$ denote the expectation conditionally on the randomness coming from $(H_N(\sigma))_{\sigma \in \mathbb{R}^{D \times N}}$. We define the partially and fully averaged free energies
\begin{align*}
    \Tilde{F}_N(t,\mu) =  \tilde \E [ F_N(t,\mu) ], \\
    \bar F_N(t,\mu) = \E[ F_N(t,\mu)].
\end{align*}
For every $t \geq 0$, $\Tilde{F}_N(t,\cdot)$ is Lipschitz continuous on the set of finitely supported measures in $\mcl P^\upa_\infty(S^D_+)$ \cite[Proposition~3.1]{mourrat2020free}. In particular $\bar F_N$ and $\tilde F_N$ can be extended by continuity to $\R_+ \times \mcl P^\upa_1(S^D_+)$. We let $\psi =  F_1(0,\cdot)$, that is $\psi$ is the unique Lipschitz continuous function on $\mcl P^\upa_1(S^D_+)$ such that for every finitely supported $\mu$ as in \eqref{e.def.finitely.supported}, we have 
\begin{equation} \label{e.def_psi}
    \psi(\mu) = - \mathbb{E} \log \int \sum_{\alpha \in \mathbb{N}^K} \exp \left( H^\mu_1(\sigma,\alpha) - q_K \cdot \sigma \sigma^\perp \right) v_\alpha  \d P_1(\sigma). 
\end{equation}
Let $U$ be a uniform random variable on $[0,1)$. Given a monotonically coupled measure, $\mu \in \mcl P^\upa_p(S^D_+)$, there exists a unique nondecreasing right continuous path with left limits $\msf q_\mu \in L^p([0,1),S^D)$ such that $\mu$ is the law of the random variable $X_\mu = \msf q_\mu(U)$. We let $\mcl Q(S^D_+)$ denote the set of nondecreasing right continuous paths with left limits, and we define $\mcl Q_p(S^D_+) = \mcl Q(S^D_+) \cap L^p([0,1),S^D)$. The set $\mcl Q_p(S^D_+)$ is a closed convex cone of $L^p([0,1),S^D)$, meaning that it is a closed convex subset and for every $\msf q, \msf q' \in \mcl Q_p(S^D_+)$, $t,t' \geq 0$ we have 
\begin{e*}
    t \msf q + t' \msf q' \in \mcl Q_p(S^D_+).
\end{e*}
The cone $\mcl Q_2(S^D_+)$ is embedded in the Hilbert space $L^2([0,1),S^D)$, we can define its dual cone
\begin{e*}
    \mcl Q_2(S^D_+)^* = \left\{ \msf p \in L^2([0,1),S^D) \big| \, \forall \msf q \in \mcl Q_2(S^D_+), \; \langle \msf p, \msf q\rangle_{L^2} \geq 0  \right\}.
\end{e*}
Let $\kappa \in L^2$, according to \cite[Lemma~3.5]{chenmourrat2023cavity}, we have $\kappa \in \mcl Q_2(S^D_+)^*$ if and only if for every $t \in [0,1)$,
\begin{e*}
    \int_t^1 \kappa(u) \d u \in S^D_+.
\end{e*}
Given $\msf q, \msf q' \in \mcl Q_1(S^D_+)$, we write $\msf q \leq \msf q'$ whenever for every $t \in [0,1)$,
\begin{e*}
    \int_t^1 \msf q'(u) - \msf q(u) \d u \in S^D_+.
\end{e*}
When $\msf q, \msf q' \in \mcl Q_2(S^D_+)$, we have $\msf q \leq \msf q'$ if and only if $\msf q' - \msf q \in \mcl Q_2(S^D_+)^*$. This notation can be extended to $\mcl P^\upa_1(S^D_+)$ by setting $\mu \leq \mu'$ whenever $\msf q_{\mu} \leq \msf q_{\mu'}$. We say that a function defined on a subset of $\mcl P^\upa_1(S^D_+)$ is nondecreasing when it is nondecreasing with respect to the order that we have just defined.

The dual of the closed convex cone $ \mcl Q_2(S^D_+)^*$ is the cone $\mcl Q_2(S^D_+)$ itself, more precisely we have 
\begin{e} \label{e.biduality}
     \mcl Q_2(S^D_+) = \left\{ \msf q \in L^2([0,1),S^D) \big| \, \forall \msf p \in \mcl Q_2(S^D_+)^*, \; \langle \msf p, \msf q\rangle_{L^2} \geq 0  \right\}.
\end{e}
This property will be useful to show that certain functions defined on $\mcl P^\upa_2(S^D_+)$ are nondecreasing.

The point of introducing the enriched free energy \eqref{e.def enriched fe} is \cite[Corollary~8.2]{chenmourrat2023cavity} which we state as Theorem~\ref{t.limit.free.energy} below. Roughly speaking, Theorem~\ref{t.limit.free.energy} states that the limit of the enriched free energy is the unique solution to a partial differential equation. One can recover the classical Parisi formula \eqref{e.parisi} from Theorem~\ref{t.limit.free.energy} by simply setting $\mu = \delta_0$ in \eqref{e.lim.FN}. Theorem~\ref{t.limit.free.energy} gives a new way of interpreting the Parisi formula, which does not solely rely on a variational representation. In particular, this point of view allows us to formulate a credible conjecture for the limit free energy of models whose covariance function $\xi$ is nonconvex on $S^D_+$ \cite[Conjecture~2.6]{mourrat2019parisi}. We refer to  \cite{chenmourrat2023cavity,mourrat2020nonconvex,mourrat2020free} for partial results in this direction. 
\begin{theorem}[\cite{chenmourrat2023cavity}] \label{t.limit.free.energy}
    Suppose that the function $\xi$ is convex over $S^D_+$. Then for every $t \ge 0$ and $\mu \in \mcl P^\upa_1(S^D_+)$, we have
    \begin{equation} \label{e.lim.FN}
        \lim_{N \to +\infty} \bar F_N(t,\mu) = \sup_{\nu \in \mcl P^\upa_\infty(S^D_+), \nu \ge \mu} \Ll\{ \psi(\nu) - t \E \Ll[ \xi^* \Ll( \frac{X_\nu - X_\mu}{t} \Rr)  \Rr]  \Rr\} .
    \end{equation}
    Moreover, denoting by $f(t,\mu)$ the expression above, we have that $f : \R_+ \times \mcl P^\upa_2(S^D_+) \to \R$ solves the Hamilton-Jacobi equation
    \begin{equation} \label{e.hj}
        \Ll\{
        \begin{aligned} 
            & \dr_t f - \int \xi(\dr_{\mu} f) \, \d \mu = 0 & \text{ on } \R_+ \times \mcl P^\upa_2(S^D_+) \\
            & f(0,\cdot) = \psi \quad & \text{ on } \mcl P^\upa_2(S^D_+). 
        \end{aligned}
        \Rr.
    \end{equation}
\end{theorem}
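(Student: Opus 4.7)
The theorem bundles together a Parisi-type variational representation \eqref{e.lim.FN} and the characterization of the limit as a viscosity solution of the Hamilton-Jacobi equation \eqref{e.hj}. The plan is to first establish \eqref{e.lim.FN} by matching upper and lower bounds on $\bar F_N(t,\mu)$, and then to extract the HJ structure from the Hopf-Lax form of the resulting variational formula. The Guerra-side direction $\lim \bar F_N(t,\mu) \geq \sup_{\nu \geq \mu}\{\psi(\nu) - t\E[\xi^*((X_\nu - X_\mu)/t)]\}$ proceeds by fixing $\nu \in \mcl P^\upa_\infty(S^D_+)$ with $\nu \geq \mu$, constructing a Gaussian interpolation between the enriched Hamiltonian at $(t,\mu)$ and the one at $(0,\nu)$, differentiating the interpolated free energy in the interpolation parameter, and signing the derivative using convexity of $\xi$ on $S^D_+$ together with the Fenchel-Young inequality $\xi(a)+\xi^*(b) \geq a\cdot b$. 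Integrating back and supremizing over $\nu$ gives the bound.

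The matching inequality $\lim \bar F_N(t,\mu) \leq \sup_{\nu\geq\mu}\{\dots\}$ is the deep Parisi direction and is where I expect the main obstacle. My plan is to run an Aizenman-Sims-Starr cavity scheme, writing $\bar F_N(t,\mu)$ as a telescoping sum whose successive increments $N\bar F_N - (N-1)\bar F_{N-1}$ are cavity free energies. The classical Parisi formula of \cite{pan.vec} combined with the overlap synchronization theorem, which is the place where convexity of $\xi$ on $S^D_+$ genuinely enters, forces the limiting matrix-valued overlap array to concentrate along a single nondecreasing path, encoded by some $\nu$. Ensuring that this $\nu$ satisfies the order constraint $\nu \geq \mu$ imposed by the external field structure of $H_N^{t,\mu}$, and threading the Ghirlanda-Guerra identities through the cavity computation so that the resulting bound has the exact Hopf-Lax form, is the delicate step.

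For the HJ equation, let $f$ denote the RHS of \eqref{e.lim.FN}. The involutive duality $\xi^{**} = \xi$ on $S^D_+$ immediately yields a Hopf-Lax semigroup property $f(t+s,\mu) = \sup_{\nu \geq \mu}\{f(t,\nu) - s\E[\xi^*((X_\nu - X_\mu)/s)]\}$ directly from the definition of $f$. Lipschitz and concavity estimates on $\psi$ transfer to $f$, so that the transport derivative $\partial_\mu f$ is well defined in the sense of \cite{mourrat2019parisi}. Writing the first-order condition for the supremum, and using the biduality \eqref{e.biduality} of the cone $\mcl Q_2(S^D_+)$ to encode the one-sided constraint $\nu \geq \mu$ in $L^2$ terms, I would identify the maximizer with the direction dictated by $\partial_\mu f$ and send $s \downarrow 0$ in the semigroup relation to obtain $\partial_t f = \int \xi(\partial_\mu f)\d\mu$ in the viscosity sense. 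The initial condition $f(0,\mu) = \psi(\mu)$ is recovered by choosing $\nu = \mu$, and standard viscosity uniqueness theory on $\mcl P^\upa_2(S^D_+)$ then closes the argument.
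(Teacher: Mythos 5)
Theorem~\ref{t.limit.free.energy} is not proved in this paper at all: the text explicitly imports it as \cite[Corollary~8.2]{chenmourrat2023cavity}, and the whole point of introducing the enriched free energy in Section~\ref{s.enriched} is to be able to \emph{quote} this result and build on it. There is therefore no in-paper proof to compare your argument against, and an attempt to prove the theorem from scratch is off-target for the present manuscript.

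Taken on its own as a précis of how the cited reference establishes the result, your sketch has the right high-level shape (Guerra interpolation for the bound $\lim\bar F_N\ge\sup$ — correct with the sign convention here, since $\bar F_N$ carries a leading minus; Aizenman--Sims--Starr cavity plus Panchenko synchronization for the reverse bound; Hopf--Lax/viscosity machinery for the PDE). But two of your closing moves are presented as much easier than they are. First, the semigroup identity $f(t+s,\mu)=\sup_{\nu\ge\mu}\{f(t,\nu)-s\,\E[\xi^*((X_\nu-X_\mu)/s)]\}$ does not follow "immediately" from $\xi^{**}=\xi$: iterating the variational formula on the cone with the one-sided constraint $\nu\ge\mu$ requires the monotone coupling structure and is itself a theorem, proved via finite-dimensional approximation by the cones $\mcl Q^j(S^D_+)$ in \cite{chen2022hamilton}. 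Second, "standard viscosity uniqueness theory on $\mcl P^\upa_2(S^D_+)$" is not standard: $\mcl Q_2(S^D_+)$ has empty interior in $L^2$, the derivative in \eqref{e.hj} is a Fréchet/transport derivative, and the comparison principle on this cone is one of the main contributions of \cite{chen2022hamilton,chen2023viscosity}. You also do not explain how the order constraint $\nu\ge\mu$ is produced in the cavity step — it is not a consequence of synchronization alone but of the external cascade field encoded by $\mu$ in $H^{t,\mu}_N$ — which is precisely the refinement that upgrades the classical Parisi formula to the enriched form \eqref{e.lim.FN}. If your goal is to reprove the theorem rather than cite it, you would essentially need to reproduce Section~8 of \cite{chenmourrat2023cavity} together with the well-posedness results of \cite{chen2022hamilton}.
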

\begin{remark}
    When $D = 1$, as pointed out in \cite[Propostion~A.3]{chen2022hamilton}, we can drop the condition “$\nu \ge \mu$" in \eqref{e.lim.FN} and we have 
    \begin{equation} \label{e.lim.FN.scalar}
        \lim_{N \to +\infty} \bar F_N(t,\mu) = \sup_{\nu \in \mcl P_\infty(\R_+)} \Ll\{ \psi(\nu) - t \E \Ll[ \xi^* \Ll( \frac{X_\nu - X_\mu}{t} \Rr)  \Rr]  \Rr\} .
    \end{equation}
\end{remark}
In \eqref{e.hj}, the symbol $\partial_\mu$ should be interpreted as a derivative of transport type. Informally, this means that given any sufficiently smooth function $g : \mcl P_2^\upa(S^D_+) \to \R$, the symbol $\partial_\mu g(\mu)$ should be understood as an $S^D$-valued function defined on $S^D_+$ that is square integrable with respect to the measure $\mu$ and is such that the following holds as $\mu'$ converges to $\mu$ in $\mcl P_2^\upa(S^D_+)$,
\begin{e*}
    g(\mu') = g(\mu) + \E [\partial_\mu g(\mu)(X_\mu) \cdot (X_{\mu'} - X_\mu) ] + o\left( \E \left[ \left|X_{\mu'} - X_\mu\right|^2 \right ]^{\frac{1}{2}} \right).
\end{e*}
\begin{definition} \label{d.frechet}
    Let $\msf g : \mcl Q_2(S^D_+) \to \R$ and $\msf q \in \mcl Q_2(S^D_+)$. We say that $\msf g$ is Fréchet differentiable at $\msf q$ if there exists a unique $\msf p \in L^2([0,1),S^D)$ such that
    \begin{e*}
        \lim_{r \to 0} \sup_{\substack{\msf q' \in \mcl Q_2(S^D_+) \\ 0 < |\msf q - \msf q'|_{L^2} \leq r}} \frac{\msf g (\msf q') - \msf g (\msf q) - \langle \msf p, \msf q' - \msf q \rangle_{L^2} }{|\msf q - \msf q'|_{L^2}} = 0. 
    \end{e*}
    In this case, we say that $\msf p$ is the Fréchet derivative of $\msf g$ at $\msf q$ and we denote it $\nabla \msf g(\msf q)$.
\end{definition}

The derivative $\partial_\mu$ can be reinterpreted as a Fréchet derivative in the following way. The map $\Omega : \mu \mapsto \msf q_\mu$ yields a nonlinear isometric bijection between $\mcl P^\upa_2(S^D_+)$ and $\mcl Q_2(S^D_+)$. Given a sufficiently smooth function $g : \mcl P_2^\upa(S^D_+) \to \R$, we define $\msf g  = g \circ \Omega^{-1}$. Then formally, we have for every $u \in [0,1)$
\begin{e*}
    \partial_\mu g(\mu) (\msf q_\mu(u)) = \nabla \msf g(q_\mu)(u).
\end{e*}
In particular, setting $\msf f(t,\msf q) =  f(t,\Omega^{-1} \msf q)$, the partial differential equation \eqref{e.hj} can be seen as a special case of 
\begin{e} \label{e.hj frechet}
   \begin{cases}
       \partial_t \mathsf f - H(\nabla \mathsf f  ) = 0 \textrm{ on } (0,+\infty) \times \mcl Q_2(S^D_+) \\
       \msf f (0,\msf q) = \psi(\Omega^{-1} \msf q),
   \end{cases} 
\end{e}
with $H(\msf p ) = \int_0^1 \xi( \msf p(u)) \d u$. This is the point of view adopted in \cite{chen2022hamilton} to prove that \eqref{e.hj} is well-posed. Note that in \eqref{e.hj frechet} the nonlinearity $H$ doesn't depend directly on $\msf q$, while in \eqref{e.hj} the nonlinearity depends on $\mu$ through the integration of $\xi(\dr_{\mu} f)$ with respect to $\mu$. Despite this apparent simplification, some important properties of $f$ and $\psi$ can only be seen when they are considered as functions of $\mu$ and not $\msf q$. For example $\psi : \mcl P_2^\upa(S^D_+) \to \R$ is concave along straight lines \cite{chen2023parisipdevector}, but this doesn't imply that $\msf q \mapsto \psi(\Omega^{-1} \msf q)$ is concave, since in general,
\begin{e*}
    \msf q_{(1-\lambda) \mu_0 + \lambda \mu_1} \neq (1-\lambda) \msf q_{\mu_0} + \lambda \msf q_{\mu_1}.
\end{e*}
Finally, note that the cone $\mcl Q_2(S^D_+)$ has empty interior in $L^2$. We define $\mcl Q_\upa(S^D_+)$ as the set of paths $\msf q \in \mcl Q_2(S^D_+)$ such that there exists $c > 0$ satisfying the following for every $u \leq v$,
\begin{e*}
    \left( \msf q(v) - cv \text{Id} \right) -  \left( \msf q(u) - cu \text{Id} \right) \in S^D_+ \textrm{ and } \text{Ellipt}( \msf q(v) - \msf q(u)) \leq \frac{1}{c},
\end{e*}
where for $m \in S^D$, $\text{Ellipt}(m) \in [1,+\infty)$ denotes the ratio between the smallest and the largest eigenvalue of $m$. In practice  $\mcl Q_\upa(S^D_+)$ plays the role of the interior of $\mcl Q_2(S^D_+)$ since for every $\msf q \in \mcl Q_\upa(S^D_+)$ and every Lipschitz $\kappa : [0,1) \to S^D$, we have $\msf q +\varepsilon \kappa \in \mcl Q_2(S^D_+)$ for every $\varepsilon > 0$ small enough. 
\section{Fenchel-Moreau duality in Wasserstein space} \label{s.fenchel-moreau}
 
Let $F \subset S^D_+$ be a closed set, and $x_0 \in F$. Let $\mcl L(F,x_0)$ denote the set of Lipschitz functions $\chi : F \to \R$, equipped with the norm $|\chi|_\mcl{L} = \max\{|\chi(x_0)|,|\chi|_{\text{Lip}}\}$ where
\begin{e*}
    |\chi|_{\text{Lip}} = \sup_{\substack{x,y \in F \\ x \neq y}} \left\{ \frac{|\chi(x) - \chi(y)|}{|x-y|}\right\}.
\end{e*}
When the point $x_0$ is clear from context, we simply write $\mcl L(F)$. We let $\mcl L_{\leq 1}(F)$ denote the unit ball of $\mcl L(F)$, that is 
\begin{e*}
    \mcl L_{\leq 1}(F) = \{ \chi \in \mcl L(F) \big| \, |\chi|_{\mcl L} \leq 1 \}.
\end{e*}
Note that, according to the Arzelà-Ascoli theorem, $\mcl L_{\leq 1}(F)$ is compact with respect to the topology of local uniform convergence. We also let $\mcl L^0(F)$ denote the set of functions $\chi \in \mcl L(F)$ satisfying $\chi(x_0)= 0$ and $\mcl L_{\leq 1}^0(F) = \mcl L^0(F) \cap \mcl L_{\leq 1}(F)$. 

Given a signed Borel measure $\nu$ on $F$ with finite first moment, we define its Kantorovich-Rubinstein norm, 
\begin{equation*} \label{e.OT distance dual}
    |\nu|_{\mcl M} = \sup_{\chi \in \mcl L_{\leq 1}(F)} \left\{ \int_{F} \chi \d\nu \right\}.
\end{equation*}
We let $\mcl M_1(F)$ denote the completion with respect to $|\cdot|_{\mcl M}$ of the set of signed Borel measures on $F$ having finite first moment. The closed linear span of $\{\delta_x \big | \, x \in F \}$ is $\mcl M_1(F)$. Note that the distance $d(\nu,\nu') = |\nu - \nu'|_{\mcl M}$ induced by the norm $|\cdot|_{\mcl M}$ coincides with the optimal transport distance when restricted to $\mcl P_1(F) \times \mcl P_1(F)$ \cite[Theorem~5.10]{villani2}, in particular for every $\nu,\nu' \in \mcl P_1(F)$ we have
\begin{e*}
    |\nu-\nu'|_\mcl{M} = \inf_{\pi \in \Pi(\nu,\nu')} \left\{ \int_{F \times F} |x-y| \d \pi(x,y) \right\},
\end{e*}
where $\Pi(\nu,\nu')$ denotes the set of probability measures $\pi \in \mcl P_1(F \times F)$ such that $\pi_1 = \nu$ and $\pi_2 = \nu'$. Here and throughout, $\pi_1$ and $\pi_2$ denote the first and second marginal of the coupling $\pi$. More precisely, if $p_i : F \times F \to F$ denotes the projection on the $i$-th coordinate, we have $\pi_i = {p_{i}}_\#\pi $

\begin{proposition}
   The continuous dual of $(\mcl M_1(F), |\cdot|_{\mcl M})$ is $(\mcl L(F), |\cdot|_{\mcl L})$.
\end{proposition}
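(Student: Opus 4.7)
The plan is to define the canonical evaluation map $\iota : \mcl L(F) \to \mcl M_1(F)^*$ by $\iota(\chi)(\nu) = \int_F \chi \, \d \nu$ and verify that it is an isometric isomorphism. Everything reduces to two norm computations at the level of Dirac masses, together with the density of finite linear combinations of Diracs in $\mcl M_1(F)$, which is given in the statement.

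First I would compute $|\delta_{x_0}|_{\mcl M} = 1$ and $|\delta_x - \delta_y|_{\mcl M} = |x-y|$ for $x,y \in F$. The upper bounds are immediate since every $\chi \in \mcl L_{\leq 1}(F)$ satisfies $|\chi(x_0)| \leq 1$ and is $1$-Lipschitz. The matching lower bounds come from explicit test functions: $\chi \equiv 1$ for the first, and $\chi(z) := |z-y| - |x_0 - y|$ for the second. The latter vanishes at $x_0$ and has Lipschitz constant $1$, so it lies in $\mcl L_{\leq 1}(F)$ and realizes the value $|x-y|$ on $\delta_x - \delta_y$.

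Using these computations, the isometry $\|\iota(\chi)\|_{\mcl M_1(F)^*} = |\chi|_{\mcl L}$ follows in two directions. Since $\mcl L_{\leq 1}(F)$ is symmetric under negation, the defining supremum of $|\nu|_{\mcl M}$ actually equals $\sup_{\chi' \in \mcl L_{\leq 1}(F)} |\int \chi' \, \d\nu|$, so rescaling gives $|\iota(\chi)(\nu)| \leq |\chi|_{\mcl L} \cdot |\nu|_{\mcl M}$ on the dense subspace of signed measures with finite first moment; continuity extends this to all of $\mcl M_1(F)$. Conversely, testing $\iota(\chi)$ against $\delta_{x_0}$ gives $\|\iota(\chi)\| \geq |\chi(x_0)|$, and testing against $\delta_x - \delta_y$ and taking a supremum over $x \neq y$ gives $\|\iota(\chi)\| \geq |\chi|_{\text{Lip}}$; hence $\|\iota(\chi)\| \geq |\chi|_{\mcl L}$.

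For surjectivity, given $T \in \mcl M_1(F)^*$ I would set $\chi(x) := T(\delta_x)$. The two norm identities above immediately yield $|\chi(x_0)| \leq \|T\|$ and $|\chi|_{\text{Lip}} \leq \|T\|$, so $\chi \in \mcl L(F)$. By linearity, $T$ and $\iota(\chi)$ coincide on all finite linear combinations of Dirac masses; the density assumption and continuity of both functionals then force equality on $\mcl M_1(F)$. The only mildly delicate point is producing the test function $z \mapsto |z-y| - |x_0-y|$ with the correct normalization at the base point $x_0$, but this is straightforward, so I expect no real obstacle.
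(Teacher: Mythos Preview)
Your proposal is correct and follows essentially the same route as the paper: define $\chi(x)=T(\delta_x)$ for surjectivity, use the density of finite combinations of Diracs, and recover the norm equality by testing against $\pm\delta_{x_0}$ and $\pm(\delta_x-\delta_y)/|x-y|$. The only cosmetic difference is that you first isolate the computations $|\delta_{x_0}|_{\mcl M}=1$ and $|\delta_x-\delta_y|_{\mcl M}=|x-y|$ explicitly, whereas the paper uses them inline.
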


\begin{proof}
    Let $\ell$ be a continuous linear form on $(\mcl M(F), |\cdot|_{\mcl M})$, for every $x \in F$ we let $\chi(x) = \ell(\delta_x)$. For every $x,y \in F$ we have 
    \begin{e*}
        |\chi(x) - \chi(y)| = |\ell(\delta_x - \delta_y)| \leq c |\delta_x - \delta_y|_{\mcl M} = c|x-y|,
    \end{e*}
    therefore $\chi \in \mcl L(F)$. For every $x_1,\dots,x_K \in F$ and for every $\lambda_1,\dots,\lambda_K \in \R$, we have 
    \begin{e*}
        \ell \left( \sum_{k = 1}^K \lambda_k \delta_{x_k} \right) = \sum_{k = 1}^K \lambda_k \chi(x_k).
    \end{e*}
    This means that for every finitely supported $\mu \in \mcl M_1(F)$,
    \begin{e*}
        \ell(\mu) = \int \chi \d\mu.
    \end{e*}
    Since $\chi \in \mcl L(F)$, the map $\mu \mapsto \int \chi \d\mu$ defines a continuous linear form on $(\mcl M_1(F), |\cdot|_{\mcl M})$ that coincides with $\ell$ on the set of finitely supported measures. By density, we have for every $\mu \in \mcl M_1(F)$, 
    \begin{e*}
        \ell(\mu) = \int \chi \d\mu.
    \end{e*}
    We have proven that $(\mcl M_1(F), |\cdot|_{\mcl M})^*= (\mcl L(F), |\cdot|^*_{\mcl M})$, where
    \begin{e*}
        |\chi|^*_{\mcl M} = \sup_{|\mu|_{\mcl M} \leq 1} \left\{ \int \chi \d\mu \right\}.
    \end{e*}
    To conclude, let us show that for every $\chi \in \mcl L(F)$, $|\chi|^*_{\mcl M} = |\chi|_{\mcl L}$. By construction, it is clear that $|\chi|^*_{\mcl M} \leq |\chi|_{\mcl L}$. To show the converse inequality, one can plug in $\mu = \pm (\delta_x - \delta_y)/|x-y|$ and $\mu = \pm \delta_{x_0}$ in the previous display to discover that
    \begin{align*}
        |\chi|^*_{\mcl M}  &\geq \frac{|\chi(x) -\chi(y)|}{|x-y|}, \\
        |\chi|^*_{\mcl M}  &\geq  |\chi(x_0)|. \qedhere
    \end{align*}
\end{proof}

We say that a sequence of measures $(\mu_n)_n$ in $\mcl M_1(F)$ weakly converges to $\mu$ in $\mcl M_1(F)$ as $n \to +\infty$ when for every $\chi \in \mcl L(F)$,
\begin{e*}
    \lim_{n \to +\infty} \int \chi \d\mu_n = \int \chi \d\mu.
\end{e*}
We say that a function $\varphi : \mcl M_1(F) \to \R \cup \{-\infty\}$ is weakly upper semicontinuous when for every sequence $(\mu_n)_n$ that weakly converges to $\mu$ in $\mcl M_1(F)$, we have
\begin{e*}
    \limsup_{n \to + \infty} \varphi(\mu_n) \leq \varphi(\mu).
\end{e*}
Every weakly upper semi-continuous function on $\mcl M_1(F)$ is upper semi-continuous (that is, with respect to strong convergence under $|\cdot|_{\mcl M}$). The converse is not true in general, but the following result holds.
\begin{proposition} \label{p. usc implies wusc}
    Let $\varphi : \mcl M_1(F) \to \R$ be a concave function, if $\varphi$ is upper semi-continuous then $\varphi$ is weakly upper semi-continuous.
\end{proposition}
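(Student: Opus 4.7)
The plan is to argue by contradiction via Hahn--Banach separation in the Banach space $\mcl M_1(F) \times \R$, exploiting the fact that the preceding proposition identifies the continuous dual of $(\mcl M_1(F), |\cdot|_\mcl M)$ with $(\mcl L(F), |\cdot|_\mcl L)$. Fix a sequence $(\mu_n)$ weakly converging to $\mu$, set $L = \limsup_n \varphi(\mu_n)$, and suppose toward a contradiction that $L > \varphi(\mu)$; fix $L' \in (\varphi(\mu), L)$.

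Next I would introduce the hypograph $H = \{(\nu, r) \in \mcl M_1(F) \times \R \, | \, r \leq \varphi(\nu)\}$, which is convex (by concavity of $\varphi$) and closed (by the strong upper semi-continuity of $\varphi$), while $(\mu, L') \notin H$. The geometric Hahn--Banach theorem then produces a non-zero continuous linear functional on $\mcl M_1(F) \times \R$ strictly separating $(\mu, L')$ from $H$. By the preceding proposition this functional takes the form $(\nu, r) \mapsto \int \chi \d\nu + \alpha r$ with $\chi \in \mcl L(F)$ and $\alpha \in \R$, and separation becomes $\int \chi \d\nu + \alpha r \leq c < \int \chi \d\mu + \alpha L'$ for every $(\nu, r) \in H$ and some $c \in \R$. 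Letting $r \to -\infty$ at fixed $\nu$ forces $\alpha \geq 0$, while $\alpha = 0$ is ruled out because setting $\nu = \mu$ would yield $\int \chi \d\mu \leq c < \int \chi \d\mu$. So after rescaling I may take $\alpha = 1$; choosing $r = \varphi(\nu)$ gives $\varphi(\nu) + \int \chi \d\nu \leq c < L' + \int \chi \d\mu$ for every $\nu \in \mcl M_1(F)$.

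To conclude I would apply this last inequality with $\nu = \mu_n$ and pass to the $\limsup$. Since $\nu \mapsto \int \chi \d\nu$ is continuous for the weak topology on $\mcl M_1(F)$, one has $\int \chi \d\mu_n \to \int \chi \d\mu$, so the inequality becomes $L + \int \chi \d\mu \leq c < L' + \int \chi \d\mu$, hence $L < L'$, contradicting the choice of $L'$. The only nontrivial ingredient is the production of the separating functional; its existence hinges entirely on the identification of the dual established in the preceding proposition, and the remaining manipulations are standard. An alternative route would be to apply Mazur's lemma to extract convex combinations of a subsequence of the $\mu_n$ that converge strongly to $\mu$, then transfer the $\limsup$ bound through concavity and strong upper semi-continuity; this relies on the same underlying duality.
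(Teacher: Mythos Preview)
Your argument is correct and is essentially the same approach as the paper's: both use that the hypograph of $\varphi$ is convex and norm-closed, then invoke Hahn--Banach separation. The paper simply cites the standard consequence that a closed convex set in a normed space is weakly closed (for the weak topology $\sigma(\mcl M_1(F),\mcl L(F))$, which by the preceding proposition is exactly the paper's notion of weak convergence), and deduces weak upper semicontinuity immediately; you unpack this by explicitly producing the separating functional and tracing the contradiction along the sequence $(\mu_n)$.
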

\begin{proof}
    Since $\varphi$ is concave and upper semi-continuous, the set 
    \begin{e*}
        B = \{ (x,\mu) \in \R \times \mcl M_1(F) \big|  \, \varphi(\mu) \geq x\},
    \end{e*}
    of points below the graph of $\varphi$ is closed and convex. In particular, it follows from the Hahn-Banach separation theorem, that $B$ is weakly closed \cite[Corollary~1.5]{conway1990fa}. This means that $\varphi$ is weakly upper semi-continuous. 
\end{proof}
Let us now give a statement of the Fenchel-Moreau duality in the context of the dual pair $(\mcl M_1(F), \mcl L(F))$. Usually, the Fenchel-Moreau duality is stated for convex functions, but it can be transformed into a statement about concave functions (and vice-versa) replacing each function $\varphi$ by its opposite $-\varphi$. Here the functions we are ultimately interested in in Section~\ref{s. scalar models} and \ref{s.vector models} are concave, so we choose to state the Fenchel-Moreau duality as a result for concave functions. 

Let $\varphi : \mcl M_1(F) \to \R \cup \{-\infty\}$, we define its concave conjugate $\varphi_* : \mcl L(F) \to \R \cup \{-\infty\}$ by
\begin{align*}
    \varphi_*(\chi) = \inf_{\mu \in \mcl M_1(F)} \left\{ \int \chi \d\mu - \varphi(\mu) \right\}.
\end{align*}
Similarly, given a function $\phi : \mcl L(F) \to \R \cup \{-\infty\}$ we define its concave conjugate $\phi_* : \mcl M_1(F) \to \R \cup \{-\infty\}$ by
\begin{align*}
    \phi_*(\mu) = \inf_{\chi \in \mcl L(F)} \left\{ \int \chi \d\mu - \phi(\chi) \right\}.
\end{align*}
The following theorem is a translation of \cite[Theorem~2.3.3]{convexanalysisvectorspace} in our context.

\begin{theorem}[\cite{convexanalysisvectorspace}] \label{t.fenchel moreau} 
    Let $\varphi : \mcl M_1(F) \to \R \cup \{-\infty\}$ be a function that is not identically equal to $-\infty$. Then $\varphi_{**} = \varphi$ if and only if $\varphi$ is concave and upper semicontinuous.
\end{theorem}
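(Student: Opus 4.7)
The plan is to adapt the classical proof of the Fenchel--Moreau theorem to the dual pair $(\mcl M_1(F), \mcl L(F))$ identified in the previous proposition. Since $\mcl M_1(F)$ is a Banach space whose continuous dual is (isomorphically) $\mcl L(F)$, the standard ingredients of the argument --- Hahn--Banach strong separation and the explicit description of continuous linear functionals --- are available in our setting.

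The forward implication is immediate: if $\varphi = \varphi_{**}$, then $\varphi$ is by construction the pointwise infimum over $\chi \in \mcl L(F)$ of the $|\cdot|_{\mcl M}$-continuous affine maps $\mu \mapsto \int \chi \, \d\mu - \varphi_*(\chi)$, and such an infimum is automatically concave and upper semicontinuous.

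For the reverse implication, the inequality $\varphi \leq \varphi_{**}$ is a purely formal consequence of the definitions (for each $\chi$ one has $\varphi_*(\chi) \leq \int \chi \d\mu - \varphi(\mu)$). The substantive task is to prove $\varphi_{**}(\mu_0) \leq \varphi(\mu_0)$ for every $\mu_0 \in \mcl M_1(F)$. Fix $\mu_0$ and any real $\alpha > \varphi(\mu_0)$. The hypograph $B = \{(x, \mu) \in \R \times \mcl M_1(F) : \varphi(\mu) \geq x\}$ is closed (by upper semicontinuity), convex (by concavity), and does not contain $(\alpha, \mu_0)$. Applying Hahn--Banach strong separation yields a continuous linear functional on $\R \times \mcl M_1(F)$ that strictly separates $(\alpha, \mu_0)$ from $B$; by the previous proposition it has the form $(x, \mu) \mapsto sx + \int \chi \, \d\mu$ for some $s \in \R$ and $\chi \in \mcl L(F)$, and letting $x \to -\infty$ inside $B$ forces $s \geq 0$. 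When $\varphi(\mu_0) \in \R$, testing at $(\varphi(\mu_0), \mu_0) \in B$ forces $s > 0$; dividing by $s$ then produces an affine majorant $\varphi(\mu) \leq \int \tilde\chi \, \d\mu + c$ with $\int \tilde\chi \, \d\mu_0 + c < \alpha$, from which $\varphi_*(\tilde\chi) \geq -c$ and hence $\varphi_{**}(\mu_0) \leq \int \tilde\chi \, \d\mu_0 + c < \alpha$. Letting $\alpha \downarrow \varphi(\mu_0)$ closes this case.

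The main subtlety is the remaining case $\varphi(\mu_0) = -\infty$, where the separating hyperplane may be ``vertical'' ($s = 0$), yielding only $\int \chi \, \d\mu \leq \int \chi \, \d\mu_0 - \epsilon$ for all $\mu$ in the effective domain $\{\varphi > -\infty\}$. Here I would exploit the hypothesis $\varphi \not\equiv -\infty$ to select $\mu_1$ with $\varphi(\mu_1) \in \R$ and apply the preceding construction at $\mu_1$ to secure a finite affine majorant $\varphi(\mu) \leq \int \chi_1 \, \d\mu + c_1$ on all of $\mcl M_1(F)$. Combining, for each $\lambda > 0$ the map $\mu \mapsto \int (\chi_1 - \lambda \chi) \, \d\mu + c_1 + \lambda \left( \int \chi \, \d\mu_0 - \epsilon \right)$ remains an affine majorant of $\varphi$ (the extra term is nonnegative on the effective domain, and $\varphi = -\infty$ elsewhere), while its value at $\mu_0$ equals $\int \chi_1 \, \d\mu_0 + c_1 - \lambda \epsilon \to -\infty$ as $\lambda \to +\infty$. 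This forces $\varphi_{**}(\mu_0) = -\infty$, matching $\varphi(\mu_0)$ and completing the proof.
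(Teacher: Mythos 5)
Your proposal is mathematically correct, but it is worth noting that the paper does not actually prove this theorem: the sentence preceding it says it ``is a translation of [Theorem~2.3.3]{convexanalysisvectorspace} in our context,'' and the theorem carries the citation as its only justification, leaning on the fact (established in the preceding proposition) that $(\mcl M_1(F), |\cdot|_{\mcl M})$ and $(\mcl L(F), |\cdot|_{\mcl L})$ form a dual pair so that the abstract Fenchel--Moreau theorem for locally convex spaces applies verbatim. You instead supply a self-contained Hahn--Banach separation argument, which is essentially the standard proof that the cited reference encapsulates. The argument is sound: the easy inequality $\varphi \leq \varphi_{**}$ is correctly dispatched; the hypograph $B$ is closed and convex under the hypotheses; strong separation of the singleton $\{(\alpha,\mu_0)\}$ from $B$ applies since $\mcl M_1(F)$ is a Banach space and its dual is identified with $\mcl L(F)$; the observation that $s \geq 0$ (by sending $x \to -\infty$ in $B$), the promotion to $s>0$ when $\varphi(\mu_0)$ is finite, and the rescaling to an affine majorant with value $<\alpha$ at $\mu_0$ are all correct; and the ``vertical hyperplane'' case $s=0$ at a point with $\varphi(\mu_0)=-\infty$ is handled properly by anchoring a genuine affine majorant at some $\mu_1$ in the effective domain (this is exactly where the hypothesis $\varphi \not\equiv -\infty$ is used) and then tilting it with $\lambda \to \infty$ to drive $\varphi_{**}(\mu_0)$ to $-\infty$. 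The only thing that would merit a sentence if this were written out in full is the identification of the continuous dual of $\R \times \mcl M_1(F)$ with $\R \times \mcl L(F)$, which is routine for products of normed spaces but implicitly invoked. In short: what the paper outsources to a reference, you prove; your argument buys self-containment at the cost of reproducing a classical proof.
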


The concave conjugate $\varphi_*$ of $\varphi$ is defined as the infimum of a family of affine functions, hence for some $\chi \in \mcl L(F)$ we may have $\varphi_*(\chi) = -\infty$. We define,
\begin{e*}
    \text{dom}(\varphi_*) = \{ \chi \in \mcl L(F) \big| \varphi_*(\chi) > -\infty \}.
\end{e*}
For every $\mu \in \mcl M_1(F)$, we have 
\begin{e*}
    \inf_{\chi \in \mcl L(F)} \left\{ \int \chi \d\mu - \varphi_*(\chi) \right\} = \inf_{\chi \in \text{dom}(\varphi_*)} \left\{ \int \chi \d\mu - \varphi_*(\chi) \right\}.
\end{e*}
This alternative representation for $\varphi_{**}$ can be useful when the functions in $\text{dom}(\varphi_*)$ are shown to have special properties. The following proposition states that when $\varphi$ is $1$-Lipschitz, the set $\text{dom}(\varphi_*)$ is contained in the unit ball of $\mcl L(F)$.
\begin{proposition} \label{p.dual of lipschitz}
   Let $\varphi : (\mcl M_1(F), |\cdot|_{\mcl M}) \to \R$ be a $1$-Lipschitz function. For every $\chi \in \mcl L(F)$, if $|\chi|_{\mcl L} > 1$, then 
    \begin{e*}
        \varphi_*(\chi) = -\infty.
    \end{e*}   
\end{proposition}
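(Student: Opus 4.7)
The plan is to prove the contrapositive by constructing, for any $\chi \in \mcl L(F)$ with $|\chi|_\mcl L > 1$, an explicit sequence of signed measures $(\mu_\lambda)_{\lambda > 0}$ in $\mcl M_1(F)$ along which $\int \chi \,\d\mu_\lambda - \varphi(\mu_\lambda)$ diverges to $-\infty$. The key point is that $|\chi|_\mcl L = \max(|\chi(x_0)|, |\chi|_{\mathrm{Lip}})$, so at least one of the two quantities strictly exceeds $1$, and I will handle the two cases separately. The $1$-Lipschitz assumption on $\varphi$ will be used in the form $\varphi(\mu) \geq \varphi(0) - |\mu|_\mcl M$, while the duality definition of $|\cdot|_\mcl M$ directly provides the upper bounds on $|\mu_\lambda|_\mcl M$ that I need.

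Assume first that $|\chi|_{\mathrm{Lip}} > 1$. Then I can choose $x,y \in F$ and $\delta > 0$ with $\chi(y) - \chi(x) \geq |x-y| + \delta$. For $\lambda > 0$ I set $\mu_\lambda = \lambda(\delta_x - \delta_y) \in \mcl M_1(F)$. Since any $\eta \in \mcl L_{\leq 1}(F)$ satisfies $\eta(x) - \eta(y) \leq |x-y|$, the definition of $|\cdot|_\mcl M$ gives $|\mu_\lambda|_\mcl M \leq \lambda|x-y|$. Combining these,
\begin{e*}
    \int \chi \,\d\mu_\lambda - \varphi(\mu_\lambda) \leq \lambda(\chi(x) - \chi(y)) - \varphi(0) + |\mu_\lambda|_\mcl M \leq -\lambda \delta - \varphi(0) + 2\lambda|x-y| \cdot 0,
\end{e*}
so more cleanly $\int \chi \,\d\mu_\lambda - \varphi(\mu_\lambda) \leq -\lambda\delta - \varphi(0) \to -\infty$ as $\lambda \to +\infty$, hence $\varphi_*(\chi) = -\infty$.

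Assume instead that $|\chi(x_0)| > 1$. Setting $s = \sign(\chi(x_0)) \in \{-1,1\}$, I choose $\delta > 0$ with $s\chi(x_0) \geq 1 + \delta$ and consider $\mu_\lambda = -s\lambda \delta_{x_0} \in \mcl M_1(F)$. Every $\eta \in \mcl L_{\leq 1}(F)$ satisfies $|\eta(x_0)| \leq 1$, which yields $|\mu_\lambda|_\mcl M \leq \lambda$. Using again the Lipschitz bound on $\varphi$,
\begin{e*}
    \int \chi \,\d\mu_\lambda - \varphi(\mu_\lambda) \leq -s\lambda\chi(x_0) - \varphi(0) + \lambda \leq -\lambda\delta - \varphi(0),
\end{e*}
which again tends to $-\infty$ and gives $\varphi_*(\chi) = -\infty$.

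No real obstacle is expected here: both cases reduce to testing $\varphi_*$ against an elementary one-parameter family of atomic signed measures whose Kantorovich--Rubinstein norm is controlled directly by the dual description of $|\cdot|_\mcl M$. The only mild subtlety is that $|\chi|_\mcl L$ mixes a Lipschitz seminorm and a pointwise value at $x_0$, which is precisely what forces the two-case analysis; once the right test measure is chosen in each case the Lipschitz estimate on $\varphi$ closes the argument immediately.
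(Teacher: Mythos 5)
Your proof is correct and reaches the same conclusion, but via a genuinely more constructive route than the paper. The paper's proof also ends by producing a signed measure $\mu'$ with $|\mu'|_{\mcl M} < 1$ and $\int \chi \, \d\mu' < -1$ and then pushing $t\mu'$ to infinity with the same Lipschitz estimate $\varphi(\mu) \geq \varphi(0) - |\mu|_{\mcl M}$; the difference is in how $\mu'$ is obtained. The paper argues abstractly by contradiction, using that $|\chi|_{\mcl L}$ equals the operator norm $\sup_{|\mu|_{\mcl M} \leq 1} \int \chi \, \d\mu$ (the duality identity from the preceding proposition), so non-existence of such a $\mu'$ would force $|\chi|_{\mcl L} \leq 1$. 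You instead unpack the definition $|\chi|_{\mcl L} = \max\{|\chi(x_0)|, |\chi|_{\mathrm{Lip}}\}$ and exhibit explicit atomic test measures, $\lambda(\delta_x - \delta_y)$ or $-s\lambda\delta_{x_0}$, in each of the two cases; this only needs the defining formulas for $|\cdot|_{\mcl M}$ and $|\cdot|_{\mcl L}$ rather than the dual pairing result, at the modest cost of a two-case split. One cosmetic slip: in the Lipschitz case your intermediate display contains the nonsensical term ``$+ 2\lambda|x-y|\cdot 0$'', but since you immediately state and use the clean bound $\int \chi\,\d\mu_\lambda - \varphi(\mu_\lambda) \leq -\lambda\delta - \varphi(0)$, which is correct, the argument is unaffected.
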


\begin{proof} 
    Let $\chi \in \mcl L(F)$.

    \noindent Step 1. We show that if $|\chi|_{\mcl L} > 1$, then there exists $\mu' \in \mcl M_1(F)$ such that $|\mu'|_{\mcl M} < 1$ and $\int \chi \d\mu' < -1$.

    \noindent By contradiction, assume that for every $\mu \in \mcl M_1(F)$, if $|\mu|_{\mcl M} < 1$, then $\int \chi \d\mu \geq -1$. In this case, for every $\mu \in \mcl M_1(F)$ and $\varepsilon > 0$, we have 
    \begin{e*}
        \int \chi \d \left( \frac{-\mu}{(1+\varepsilon) |\mu|_{\mcl M}} \right)\geq -1.
    \end{e*}
    Letting $\varepsilon \to 0$ we obtain 
    \begin{e*}
        \int \chi \d\mu  \leq |\mu|_{\mcl M}.
    \end{e*}
    Now taking the supremum over $\mu \in \mcl M_1(F)$, this yields $|\chi|_{\mcl L} \leq 1$, a contradiction.
    
    \noindent Step 2. Conclusion.

    \noindent Let $0 \in \mcl M_1(F)$ denote the null measure. For every $\mu \in \mcl M_1(F)$, we have
    \begin{e*}
        \varphi(\mu) \geq \varphi(0) - |\mu|_\mcl{M}.
    \end{e*}
    It follows that
    \begin{e*}
        \varphi_*(\chi) \leq -\varphi(0) + \inf_{\mu \in \mcl M(F)} \left\{ \int \chi \d\mu + |\mu|_\mcl{M} \right\}.
    \end{e*}
    Assume that $|\chi|_{\mcl L} > 1$, let $\mu' \in \mcl M_1(F)$ be such that $|\mu'|_{\mcl M} < 1$ and $\int \chi \d\mu' \leq -1$ as in Step 1. We have
    \begin{e*}
        \lim_{t \to +\infty} \left( \int \chi \d (t\mu') + |t\mu'|_\mcl{M} \right) = -\infty.
    \end{e*}
    Thus, 
    \begin{e*}
        \inf_{\mu \in \mcl M_1(F)} \left\{ \int \chi \d\mu + |\mu|_\mcl{M} \right\} = -\infty,
    \end{e*}
    and it follows that $\varphi_*(\chi) = -\infty$.
\end{proof}
We use the notation $\min_{x \in X} f(x)$ to denote the value $\inf_{x \in X} f(x)$ when there exists $x_0 \in X$ such that $f(x_0)= \inf_{x \in X} f(x)$.  
\begin{corollary} \label{c.fenchel moreau for lipschitz} 
    Let $\varphi : \mcl M_1(F) \to \R \cup \{-\infty\}$ be a function that is not identically equal to $-\infty$. Assume that $\varphi$ is 1-Lipschitz with respect to $|\cdot|_{\mcl M}$ and concave, then for every $\mu \in \mcl M_1(F)$ we have 
     \begin{e*}
         \varphi(\mu) = \min_{\chi \in \mcl L_{\leq 1}(F)} \left\{ \int \chi \d\mu - \varphi_*(\chi) \right\}.
     \end{e*}
\end{corollary}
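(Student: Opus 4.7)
The plan is to combine Theorem~\ref{t.fenchel moreau} with Proposition~\ref{p.dual of lipschitz} and a compactness argument to upgrade the infimum in the Fenchel–Moreau duality to a minimum over the unit ball $\mcl L_{\leq 1}(F)$. First, I would observe that $\varphi$ is real-valued (being $1$-Lipschitz), in particular continuous, hence upper semi-continuous. Together with the concavity assumption, Theorem~\ref{t.fenchel moreau} applies and yields
\begin{e*}
    \varphi(\mu) = \varphi_{**}(\mu) = \inf_{\chi \in \mcl L(F)} \Ll\{ \int \chi \d\mu - \varphi_*(\chi) \Rr\}.
\end{e*}
By Proposition~\ref{p.dual of lipschitz}, any $\chi$ with $|\chi|_{\mcl L} > 1$ has $\varphi_*(\chi) = -\infty$, so the corresponding term in the infimum equals $+\infty$ and can be discarded. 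This restricts the infimum to $\mcl L_{\leq 1}(F)$.

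The second step is to show that this restricted infimum is attained. The set $\mcl L_{\leq 1}(F)$ is compact under the topology of local uniform convergence by the Arzelà–Ascoli theorem (as already remarked in the paper). I would then argue that the functional $\chi \mapsto \int \chi \d\mu - \varphi_*(\chi)$ is lower semi-continuous on $\mcl L_{\leq 1}(F)$ with respect to this topology. For the linear term $\chi \mapsto \int \chi \d\mu$, the key point is that every $\chi \in \mcl L_{\leq 1}(F)$ satisfies the dominating bound $|\chi(x)| \leq 1 + |x - x_0|$; for a signed Borel measure $\mu$ with finite first moment, dominated convergence gives continuity along locally uniformly convergent sequences, and a general $\mu \in \mcl M_1(F)$ is handled by approximation in $|\cdot|_{\mcl M}$, using the trivial estimate $|\int (\chi_n - \chi) \d(\mu - \mu_k)| \leq 2 |\mu - \mu_k|_{\mcl M}$. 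The concave conjugate $\varphi_*$ is defined as an infimum of such linear functionals (up to the constant $-\varphi(\nu)$), so it is upper semi-continuous on $\mcl L_{\leq 1}(F)$, making $-\varphi_*$ lower semi-continuous. The sum of a continuous function and a lower semi-continuous function is lower semi-continuous, so the infimum over the compact set $\mcl L_{\leq 1}(F)$ is attained, which gives the desired minimum.

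The main obstacle is the continuity check for $\chi \mapsto \int \chi \d\mu$ when $\mu$ is a general element of the abstract completion $\mcl M_1(F)$ rather than a bona fide signed Borel measure; this is where the explicit uniform bound derived from membership in $\mcl L_{\leq 1}(F)$ plus the approximation-by-signed-measures trick is essential. Everything else is a fairly mechanical unpacking of Theorem~\ref{t.fenchel moreau} and Proposition~\ref{p.dual of lipschitz}.
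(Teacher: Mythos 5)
Your proposal is correct and takes essentially the same route as the paper: Step 1 (Theorem~\ref{t.fenchel moreau} plus Proposition~\ref{p.dual of lipschitz} to restrict the infimum to $\mcl L_{\leq 1}(F)$) is identical, and your Step 2 is the same compactness argument, just phrased topologically (lower semicontinuity of $\chi \mapsto \int\chi\,\d\mu - \varphi_*(\chi)$ on the compact set $\mcl L_{\leq 1}(F)$) rather than via an explicit minimizing sequence with a convergent subsequence. Your remark about handling general $\mu$ in the abstract completion $\mcl M_1(F)$ by approximation in $|\cdot|_{\mcl M}$ is a fair point of care that the paper glosses over slightly, but it does not change the substance of the argument.
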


Of course, Corollary~\ref{c.fenchel moreau for lipschitz} is true if $\varphi$ is only assumed to be $L$-Lipschitz for some $L \geq 0$, in this case we need to minimize over $\mcl L_{\leq L}(F)$ rather than $\mcl L_{\leq 1}(F)$.

\begin{proof}

    \noindent Step 1. We show that
    \begin{e*}
         \varphi(\mu) = \inf_{\chi \in \mcl L_{\leq 1}(F)} \left\{ \int \chi \d\mu - \varphi_*(\chi) \right\}.
     \end{e*}

    \noindent According to Proposition~\ref{p.dual of lipschitz}, we have $\varphi_*(\chi) = -\infty$ whenever $\chi \notin \mcl L_{\leq 1}(F)$, therefore
    \begin{e*}
        \inf_{\chi \in \mcl L(F)} \left\{ \int \chi \d\mu - \varphi_*(\chi) \right\} = \inf_{\chi \in \mcl L_{\leq 1}(F)} \left\{ \int \chi \d\mu - \varphi_*(\chi) \right\}.
    \end{e*}
    Combining this with Theorem~\ref{t.fenchel moreau}, we obtain 
    \begin{e*}
         \varphi(\mu) = \inf_{\chi \in \mcl L_{\leq 1}(F)} \left\{ \int \chi \d\mu - \varphi_*(\chi) \right\}.
    \end{e*}

    \noindent Step 2. We show that the infimum in the variational formula of Step 1 is reached at some $\chi \in \mcl L_{\leq 1}$.

    \noindent For every $n \geq 1$, we let $\chi_n \in \mcl L_{\leq 1}(F)$ be such that 
    \begin{e*}
        \inf_{\chi \in \mcl L_{\leq 1}(F)} \left\{ \int \chi \d\mu - \varphi_*(\chi) \right\} \geq \int \chi_n \d\mu - \varphi_*(\chi_n) - \frac{1}{n}.
    \end{e*}
    The sequence $(\chi_n)_n$ converges locally uniformly along a subsequence $(n_k)_k$ to some $\chi \in \mcl L_{\leq 1}(F)$. In order to conclude it is enough to show that  
    \begin{e*}
        \liminf_{k \to +\infty} \left\{ \int \chi_{n_k} \d\mu - \varphi_*(\chi_{n_k}) \right\} \geq \int \chi \d\mu - \varphi_*(\chi)
    \end{e*}
    Let $\nu \in \mcl M_1(F)$, we have $\chi_{n_k} \to \chi$ pointwise on $F$ as $k \to +\infty$ and for every $x \in F$,
    \begin{e*}
        |\chi_{n_k}(x)| \leq 1+|x-x_0|.
    \end{e*}
    Therefore, by dominated convergence, we have 
    \begin{e*}
        \lim_{k \to +\infty} \int \chi_{n_k} \d\nu = \int \chi \d\nu.
    \end{e*}
    Since we have
    \begin{e*}
        \varphi_*(\chi) = \inf_{\nu \in \mcl M_1(F)} \left\{ \int \chi \d \nu - \varphi(\nu) \right\},
    \end{e*}
    it follows that $\limsup_{k \to +\infty} \varphi_*(\chi_{n_k}) \leq \varphi_*(\chi)$. Since we have 
    \begin{e*}
        \lim_{k \to +\infty} \int \chi_{n_k} \d\mu = \int \chi \d\mu,
    \end{e*}
    this concludes the proof.
\end{proof}

\section{Scalar models} \label{s. scalar models}

In this section, we focus on the case $D = 1$, and prove Theorem~\ref{t.main scalar}. Recall that in this setting $\mcl P^\upa(\R_+) = \mcl P(\R_+)$ is a convex set. Also recall that we have defined 
for every Lipschitz function $\chi : \R_+ \to \R$,  
\begin{equation*}
    S_t \chi(x) = \sup_{y \in \R_+} \left\{ \chi(y) - t \xi^* \left( \frac{y-x}{t}\right) \right\}.
\end{equation*}
The function $(t,x) \mapsto S_t \chi(x)$ can be interpreted as the unique viscosity solution of
\begin{equation} \label{e.rshj}
    \begin{cases}
        \partial_t v - \xi(\nabla v) = 0 \text{ on } (0,+\infty) \times \R_+ \\
        v(0,\cdot) = \chi.
    \end{cases}
\end{equation}

\subsection{Variational representation for the initial condition}

Let $\psi : \mcl P_1(\R_+) \to \R$, for every Lipschitz function $\chi : \R_+ \to \R$, define 
\begin{equation*}
        \psi_*(\chi) = \inf_{\mu \in \mcl P_1(\R_+)} \left\{ \int \chi \d\mu - \psi(\mu) \right\}.
\end{equation*}
We recall that $\mcl L(\R_+)$ denotes the set of Lipschitz functions $\chi : \R_+ \to \R$, we choose $x_0 = 0$ as the reference point for the norm $|\cdot|_{\mcl L}$. We also recall that $\mcl L_{\leq 1}(\R_+)$ denotes the unit ball of $\mcl L(\R_+)$ and $\mcl L^0(\R_+)$ denote the set of functions $\chi \in \mcl L(\R_+)$ satisfying $\chi(0) = 0$. We let $\mcl X$ denote the set of functions $\chi \in \mcl L_{\leq 1}(\R_+)$ that are convex and nondecreasing, and $\mcl X^0 = \mcl X \cap \mcl L^0$. Also recall that given $\mu,\nu \in \mcl P_1(\R_+)$, we say that $\mu \leq \nu$ whenever for every $t \in [0,1)$,
\begin{e} \label{e.def order}
    \int_t^1 \msf q_\nu(u) - \msf q_\mu(u)\d u \geq 0,
\end{e}
and that $\psi : \mcl P_1(\R_+) \to \R$ is said to be nondecreasing whenever we have for every $\mu,\nu \in \mcl P_1(\R_+)$,
\begin{e*}
    \mu \leq \nu \implies \psi(\mu) \leq \psi(\nu).
\end{e*}
Finally, recall that we equip $\mcl M_1(\R_+)$ with the norm $|\cdot|_{\mcl M}$ and we denote by $d$ the associated distance,
\begin{e*}
    d(\nu,\nu') = |\nu-\nu'|_{\mcl M}.
\end{e*}
We recall that we use the notation $\min_{x \in X} f(x)$ to denote the value $\inf_{x \in X} f(x)$ when there exists $x_0 \in X$ such that $f(x_0)= \inf_{x \in X} f(x)$. 
\begin{lemma} \label{l.fenchel moreau}
    Let $\psi : \mcl P_1(\R_+) \to \R$ be a $1$-Lipschitz, concave and nondecreasing function. Then, for every $\mu \in \mcl P_1(\R_+)$ we have 
    \begin{equation*}
        \psi(\mu) = \min_{\chi \in \mcl X^0} \left\{ \int \chi \d\mu - \psi_*(\chi) \right\}.
    \end{equation*}
\end{lemma}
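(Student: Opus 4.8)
The plan is to apply the general Fenchel--Moreau machinery of Section~\ref{s.fenchel-moreau} on the closed set $F = \R_+$ with reference point $x_0 = 0$, and then to cut down the dual domain $\mcl L_{\leq 1}(\R_+)$ to the subclass $\mcl X^0$ of $1$-Lipschitz, convex, nondecreasing functions vanishing at the origin. First I would check that the hypotheses of Corollary~\ref{c.fenchel moreau for lipschitz} are met: $\psi$ is $1$-Lipschitz with respect to $|\cdot|_{\mcl M}$ (which on $\mcl P_1(\R_+)$ is the Wasserstein-$1$ distance) and concave, so by that corollary $\psi(\mu) = \min_{\chi \in \mcl L_{\leq 1}(\R_+)} \{ \int \chi \, \d\mu - \psi_*(\chi) \}$. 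It then remains to show that the minimizing $\chi$ may be taken in $\mcl X^0$, i.e.\ that replacing $\mcl L_{\leq 1}(\R_+)$ by $\mcl X^0$ does not change the infimum.

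The key step is a ``regularization of the test function'' argument showing $\mathrm{dom}(\psi_*)$, or at least the part of it that matters, consists of functions that can be improved to lie in $\mcl X^0$ without increasing $\int \chi \, \d\mu - \psi_*(\chi)$. I would proceed in three reductions. (i) \emph{Vanishing at $0$:} since $\psi_*(\chi + c) = \psi_*(\chi) + c$ for a constant $c$ (because $\mu$ ranges over \emph{probability} measures, $\int(\chi+c)\d\mu = \int\chi\d\mu + c$), the quantity $\int \chi \d\mu - \psi_*(\chi)$ is invariant under adding constants, so we may always normalize $\chi(0) = 0$; this lands us in $\mcl L^0_{\leq 1}(\R_+)$. (ii) \emph{Monotonicity:} here I use that $\psi$ is nondecreasing. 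Given $\chi$, define its nondecreasing envelope $\hat\chi(x) = \inf_{y \geq x} \chi(y)$ (the largest nondecreasing minorant). One checks $\hat\chi$ is still $1$-Lipschitz, still vanishes at $0$ if $\chi \geq 0 = \chi(0)$ on $\R_+$ — and more to the point, because $\psi$ is nondecreasing with respect to the stochastic-type order \eqref{e.def order}, one shows $\psi_*(\hat\chi) \geq \psi_*(\chi)$ while $\int \hat\chi \d\mu \leq \int \chi \d\mu$, so the envelope only decreases the functional. The mechanism: for any $\mu$ there is $\nu \geq \mu$ (in the order \eqref{e.def order}) with $\int \hat\chi \, \d\mu \approx \int \chi \, \d\nu$ obtained by transporting mass rightward to where $\chi$ attains its running infimum, and $\psi(\nu) \geq \psi(\mu)$ by monotonicity; feeding this into the definition of $\psi_*$ gives the inequality. (iii) \emph{Convexity:} replace $\hat\chi$ by its convex hull $\hat\chi^{**}$ (the largest convex minorant), which is again $1$-Lipschitz, nondecreasing, and vanishes at $0$; one shows analogously, using \emph{concavity} of $\psi$ via Jensen, that $\psi_*(\hat\chi^{**}) \geq \psi_*(\hat\chi)$ while $\int \hat\chi^{**} \d\mu \leq \int \hat\chi \d\mu$. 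Concretely, convexifying $\chi$ corresponds on the measure side to replacing $\mu$ by a mean-preserving spread, and $\psi$ being concave along straight lines in $\mcl P_1(\R_+)$ handles the comparison.

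After these three reductions the infimum over $\mcl L_{\leq 1}(\R_+)$ equals the infimum over $\mcl X^0$, and it is still attained because $\mcl X^0$ is closed under local uniform convergence (convexity, monotonicity, the Lipschitz bound and the value at $0$ all pass to locally uniform limits) and the compactness/semicontinuity argument of Step~2 in the proof of Corollary~\ref{c.fenchel moreau for lipschitz} applies verbatim with $\mcl X^0$ in place of $\mcl L_{\leq 1}(\R_+)$. I expect the main obstacle to be step (ii)--(iii): making precise the duality dictionary ``monotone envelope of $\chi$ $\leftrightarrow$ upward transport of $\mu$'' and ``convex hull of $\chi$ $\leftrightarrow$ mean-preserving spread of $\mu$'', and checking that these operations interact correctly with the order \eqref{e.def order} and with concavity so that $\psi_*$ can only go up. In particular one must be careful that the envelopes stay in $\mcl M_1$-duality range (finite first moment considerations) and that $\hat\chi$, $\hat\chi^{**}$ genuinely remain $1$-Lipschitz — the Lipschitz constant of an infimal/convex envelope is bounded by that of the original function, which is the fact that makes everything go through.
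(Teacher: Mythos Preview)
Your strategy is different from the paper's: rather than regularizing $\chi$, the paper shows that for $\mu$ in a dense class (those with surjective quantile $\msf q_\mu \in \mcl Q_\upa(\R_+)$) any minimizer $\chi$ automatically lies in $\mcl X^0$, by a first-order argument --- optimality gives $\int \chi(\msf q') - \int \chi(\msf q_\mu) \geq \psi(\mu') - \psi(\mu) \geq 0$ whenever $\msf q' - \msf q_\mu \in \mcl Q_2(\R_+)^*$, and differentiating along $\msf q_\mu + \varepsilon\kappa$ forces $\nabla\chi \circ \msf q_\mu \in \mcl Q_2(\R_+)$, hence $\chi$ is convex and nondecreasing; one then extends to all $\mu$ by density. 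Your envelope route can also be made to work, but there is a genuine error in step~(iii).

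You claim $\psi_*(\hat\chi^{**}) \geq \psi_*(\hat\chi)$ follows from \emph{concavity} of $\psi$, because convexifying $\chi$ corresponds to a mean-preserving spread on the measure side. But concavity in the measure variable says nothing about how $\psi$ reacts to mean-preserving spreads of the underlying mass; these are unrelated operations. Concretely, $\psi(\mu) = -d(\mu,\delta_1)$ is concave and $1$-Lipschitz but not nondecreasing, and for this $\psi$ one computes $\psi_*(\chi) = \chi(1)$, so at $\mu = \delta_2$ the infimum of $\int\chi\,\d\mu - \psi_*(\chi)$ over $\mcl L^0_{\leq 1}$ equals $-1 = \psi(\delta_2)$ while over $\mcl X^0$ it equals $0$ --- the reduction to $\mcl X^0$ genuinely fails. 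The correct mechanism for (iii) is again \emph{monotonicity}: a mean-preserving spread $\nu'$ of $\nu$ satisfies $\nu' \geq \nu$ in the order \eqref{e.def order}, hence $\psi(\nu') \geq \psi(\nu)$, and the inequality $\psi_*(\hat\chi^{**}) \geq \psi_*(\hat\chi)$ then goes through exactly as in your step~(ii). Once corrected, both (ii) and (iii) run on monotonicity, and concavity is used only to invoke Fenchel--Moreau. (A separate minor gap: Corollary~\ref{c.fenchel moreau for lipschitz} requires a function on all of $\mcl M_1(\R_+)$, so you must first extend $\psi$ off $\mcl P_1(\R_+)$ --- e.g.\ by $\overline\psi(\mu) = \sup_{\nu \in \mcl P_1}\{\psi(\nu) - d(\mu,\nu)\}$ --- and verify $\overline\psi_* = \psi_*$ on $\mcl L_{\leq 1}$; the paper does this in its Step~1.)
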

In what follows, given $h : \R_+ \to \R$, we use $\int h(\msf q)$ as a shorthand for $\int_0^1 h(\msf q(u))\d u$.
\begin{proof}[Proof of Lemma~\ref{l.fenchel moreau}]
    For every $\mu \in \mcl M_1(\R_+)$, define 
    \begin{equation*} 
        \overline{\psi}(\mu) = \sup_{\nu \in \mcl P_1(\R_+)} \left\{ \psi(\nu) - d(\mu,\nu) \right\}.
    \end{equation*}
    The function $\overline{\psi} : \mcl M_1(\R_+) \to \R$ is 1-Lipschitz as a supremum of Lipschitz functions and is concave as a supremum of a jointly concave functional. In addition, since $\psi$ is $1$-Lipschitz, for every $\mu \in \mcl P_1(\R_+)$ we have $\overline{\psi}(\mu) = \psi(\mu)$. For every $\chi \in \mcl L(\R_+)$, we let
    \begin{e*}
        \overline{\psi}_*(\chi) = \inf_{\mu \in \mcl M_1(\R_+)} \left\{ \int \chi \d\mu - \overline{\psi}(\mu) \right\}.
    \end{e*}
    \noindent Step 1. We show that for every $\chi \in \mcl L_{\leq 1}(\R_+)$, $\overline{\psi}_*(\chi)= \psi_*(\chi)$.
   
    \noindent Let $\chi \in \mcl L_{\leq 1}(\R_+)$, the map $\mu \mapsto \int \chi \d\mu$ is $1$-Lipschitz on $\mcl M_1(\R_+)$ with respect to $|\cdot|_{\mcl M}$. In particular, for every $\nu \in \mcl P_1(\R_+)$, we have  
\begin{equation*}
    \inf_{\mu \in \mcl M_1(\R_+)} \left\{ \int \chi \d\mu + d(\nu,\mu) \right\} = \int \chi \d\nu.
\end{equation*}
Thus,
\begin{align*}
    \overline{\psi}_*(\chi) &= \inf_{\mu \in \mcl M_1(\R_+)} \left\{ \int \chi \d\mu - \overline{\psi}(\mu) \right\} \\
                           &= \inf_{\mu \in \mcl M_1(\R_+)} \inf_{\nu \in \mcl P_1(\R_+)} \left\{ \int \chi \d\mu - \psi(\nu) + d(\mu,\nu) \right\} \\
                           &= \inf_{\nu \in \mcl P_1(\R_+)} \left\{- \psi(\nu) + \inf_{\mu \in \mcl M_1(\R_+)}  \left\{ \int \chi \d\mu  + d(\mu,\nu)\right\} \right\}\\
                           &= \inf_{\nu \in \mcl P_1(\R_+)} \left\{- \psi(\nu) + \int \chi \d\nu \right\} \\
                           &= \psi_*(\chi).                     
\end{align*}

    \noindent Step 2. We show that, for every $\mu \in \mcl P_1(\R_+)$,
    \begin{equation*}
        \psi(\mu) = \min_{\chi \in \mcl L^0_{\leq 1}(\R_+)} \left\{ \int \chi \d\mu - \psi_*(\chi) \right\}.
    \end{equation*}

    \noindent According to Corollary~\ref{c.fenchel moreau for lipschitz}, we have for every $\mu \in \mcl M_1(\R_+)$,
    \begin{e*}
        \overline{\psi}(\mu) = \min_{\chi \in \mcl L_{\leq 1}(\R_+)} \left\{ \int \chi \d\mu - \overline{\psi}_*(\chi) \right\}.
    \end{e*}
    According to Step 1, for every $\chi \in \mcl L_{\leq 1}(\R_+)$, $\overline{\psi}_*(\chi) = \psi_*(\chi)$. In addition, for every $c \in \R$, we have $\psi_*(\chi+c)= \psi_*(\chi) +c$. Therefore, since $\overline{\psi}$ is an extension of $\psi$, it follows from the previous display that for every $\mu \in \mcl P_1(\R_+)$,
    \begin{equation*}
        \psi(\mu) = \min_{\chi \in \mcl L^0_{\leq 1}(\R_+)} \left\{ \int \chi \d\mu - \psi_*(\chi) \right\}.
    \end{equation*}
    \noindent Step 3. We show that in the formula of Step 2, the minimum can be taken over the set of $\chi \in \mcl L^0_{\leq 1}(\R_+)$ that are nondecreasing and convex.

    \noindent Let $\lambda$ denote the Lebesgue measure on $\R$, let $\mu \in \mcl P_1(\R_+)$ be such that the associated path $\msf q = \msf q_\mu : [0,1) \to \R_+$ is surjective, belongs to $\mcl Q_\upa(\R_+)$ and satisfies $\lambda( \msf q^{-1}(A)) = 0$ for any $\lambda$-negligible set $A \subset \R$. According to Step 2, there exists $\chi \in \mcl L^0_{\leq 1}(\R_+)$ such that
    \begin{e*}
        \psi(\mu) = \int \chi \d\mu - \psi_*(\chi).
    \end{e*}
    In addition, for every $\mu' \in \mcl P_1(\R_+)$ we have,
    \begin{e*}
        \psi(\mu') \leq \int \chi \d\mu' - \psi_*(\chi).
    \end{e*}
    Let $\msf q' \in \mcl Q_2(\R_+)$ such that $\msf q' - \msf q \in \mcl Q_2(\R_+)^*$, and let $\mu' = \text{Law}(\msf q'(U))$. By definition, we have $\mu' \geq \mu$ and,
    \begin{e*}
        \int \chi(\msf q') - \int \chi(\msf q) \geq \psi(\mu') - \psi(\mu) \geq 0.
    \end{e*}
    Now let $\kappa \in \mcl Q_2(\R_+)^*$ be a Lipschitz path, for $\varepsilon > 0$ small enough we have $\msf q +\varepsilon \kappa \in \mcl Q_2(\R_+)$, applying the previous display to $\msf q' = \msf q +\varepsilon \kappa$, we obtain  
    \begin{e*}
        \int_0^1 \frac{\chi(\msf q(u)+\varepsilon \kappa(u)) -  \chi(\msf q(u))}{\varepsilon} \d u \geq 0.
    \end{e*}
    Since $\chi$ is Lipschitz, according to Rademacher's theorem, $\chi$ is differentiable almost everywhere, so for almost every $u \in [0,1)$, we have
    \begin{e*}
        \lim_{\varepsilon \to 0} \frac{\chi(\msf q(u)+\varepsilon \kappa(u)) -  \chi(\msf q(u))}{\varepsilon}  = \nabla \chi(\msf q(u)) \kappa(u).
    \end{e*}
    In addition,
    \begin{e*}
        \left| \frac{\chi(\msf q(u)+\varepsilon \kappa(u)) -  \chi(\msf q(u))}{\varepsilon}  \right| \leq |\kappa(u)|.
    \end{e*}
    Therefore, by dominated convergence as $\varepsilon \to 0$, we have 
    \begin{e*}
        \langle \nabla \chi \circ \msf q, \kappa \rangle_{L^2} \geq 0.
    \end{e*}
    By density, the previous display holds for every $\kappa \in \mcl Q_2(\R_+)^*$ and thus $\nabla \chi \circ \msf q \in \mcl Q(\R_+)$. Since $\msf q$ is surjective, the function $\nabla \chi$ coincides almost everywhere with a nondecreasing and $\R_+$-valued function. In addition, since $\chi$ is Lipschitz, it is absolutely continuous, and we have for every $x < y$,
    \begin{e*}
        \frac{\chi(y) - \chi(x)}{y-x} = \frac{1}{y-x} \int_x^y \nabla \chi(z) \d z.
    \end{e*}
    The right-hand side in the previous display is nonnegative and given $a < x <b$, the mean value of $\nabla \chi$ on $[a,x]$ is smaller than the mean value of $\nabla \chi$ on $[x,b]$. Thus, $\chi$ satisfies 
    \begin{e*}
        0 \leq \frac{\chi(x) - \chi(a)}{x-a} \leq \frac{\chi(b) - \chi(x)}{b-x}.
    \end{e*}
    This means that $\chi$ is convex and nondecreasing. This proves,
    \begin{e*}
        \psi(\mu) = \inf_{\chi \in \mcl X^0} \left\{ \int \chi \d\mu - \psi_*(\chi) \right\}.
    \end{e*}
    In the previous display, the left and right-hand side are Lipschitz continuous and the equality holds for $\mu$ in a dense subset of $\mcl P_1(\R_+)$. Therefore, by density, the previous display holds for every $\mu \in \mcl P_1(\R_+)$. 
\end{proof}

\subsection{Viscosity solution with linear initial condition}

The goal of this section is to show Theorem~\ref{t.viscosity with linear initial condition} below. The reader only interested in un-inverted formulas for $\lim_{N \to +\infty} \bar F_N(t,\delta_0)$ but not for $\lim_{N \to +\infty} \bar F_N(t,\mu)$ can skip this subsection and directly go to Subsection~\ref{ss.scalar proof}, replacing the content of Theorem~\ref{t.viscosity with linear initial condition} by the following elementary formula,
\begin{e} \label{e.alternative}
    \sup_{\mu \in \mcl P_\infty(\R_+)} \left\{ \int \chi \d\mu - t \int \xi^* \left( \frac{\cdot}{t} \right) \d\mu \right\} =  \sup_{x \in \R_+} \left\{ \chi(x) - t  \xi^* \left( \frac{x}{t} \right)\right\}.
\end{e}
Let $(\mcl H, \langle \cdot, \cdot \rangle_{\mcl H})$ be a Hilbert space. Given $\mcl C \subset \mcl H$, we say that $\mcl C$ is a closed convex cone when $\mcl C$ is a closed set in $\mcl H$ and for every $x,x' \in \mcl C$ and $t,t' \geq 0$, we have 
\begin{e*}
    tx + t'x' \in \mcl C.
\end{e*}
Given a closed convex cone $\mcl C$ in $\mcl H$, we define its dual cone by
\begin{e}
    \mcl C^* = \{y \in \mcl H \big| \, \forall x \in \mcl C, \; \langle x,y \rangle_{\mcl H} \geq 0 \}.
\end{e}
For $x,x' \in \mcl H$, we say that $x \le x'$ when $x'-x \in \mcl C^*$ and we say that $\msf g : \mcl C \to \R$ is $\mcl C^*$-nondecreasing whenever for every $x,x' \in \mcl C$, we have 
\begin{e*}
    x \leq x' \implies \msf g(x) \leq \msf g(x').
\end{e*}
We define $\mcl V(\mcl C)$ as the set of functions $V : \R_+ \times \mcl C \to \R$ such that, for some $L \geq 0$ the following holds. For every $t \geq 0$, $V(t,\cdot)$ is $\mcl C^*$-nondecreasing and $L$-Lipschitz, and 
\begin{e*}
    \sup_{\substack{t > 0 \\ x \in \mcl C }} \left\{ \frac{V(t,x) - V(0,x)}{t} \right\} < +\infty.
\end{e*}
Recall the notion of Fréchet derivative $\nabla$ from Definition~\ref{d.frechet}, also recall that given a Fréchet differentiable function $\msf g : \mcl Q_2(\R_+) \to \R$, we have for every $\msf q \in \mcl Q_2(\R_+)$, $\nabla \msf g(\msf q) \in L^2([0,1),\R)$. In particular expressions of the form $\int h(\nabla \msf g(\msf q))$ should be understood as $\int_0^1 h(\nabla \msf g(\msf q)(u))\d u$. The notion of viscosity solution for \eqref{e.linear hj} appearing in Theorem~\ref{t.viscosity with linear initial condition} below is introduced in details in \cite[Definition~1.4]{chen2022hamilton}. 
\begin{theorem} \label{t.viscosity with linear initial condition}
     Let $\chi : \R_+ \to \R$ a Lipschitz, nondecreasing and convex function. The function $(t,\msf q) \mapsto \int S_t \chi(\msf q)$ belongs to $\mcl V(\mcl Q_2(\R_+))$ and is the unique viscosity solution of 
     \begin{equation} \label{e.linear hj}
         \begin{cases}
             \partial_t V - \int \xi( \nabla V) = 0 \text{ on } (0,+\infty) \times \mcl Q_2(\R_+)\\
        V(0,\msf q) = \int \chi(\msf q) \text{ on } \mcl Q_2(\R_+).
         \end{cases}
     \end{equation}
\end{theorem}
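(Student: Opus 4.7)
The plan is to lift the classical scalar Hopf--Lax solution $S_t\chi$ on $\R_+$ to the Wasserstein cone $\mcl Q_2(\R_+)$ via the map $\msf q \mapsto \int_0^1 S_t \chi(\msf q(u)) \d u$, verify the structural properties in the definition of $\mcl V$, and then establish the PDE by reduction to the pointwise scalar Hamilton--Jacobi equation \eqref{e.rshj}. First I would record the properties of $S_t\chi$: by \eqref{e.chi hopf}, $S_t\chi(x) = \sup_{y \in \R_+}\{xy - \chi^*(y) + t\xi(y)\}$ is a supremum of affine functions in $x$ with nonnegative slopes, hence convex, nondecreasing, and $|\chi|_{\mathrm{Lip}}$-Lipschitz (since $\chi^*$ is infinite outside $[0,|\chi|_{\mathrm{Lip}}]$). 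Classical Hopf--Lax theory gives that $S_t\chi$ is the unique viscosity solution of \eqref{e.rshj}; in particular $\partial_t S_t\chi(x) = \xi((S_t\chi)'(x))$ holds almost everywhere. Since $S_0\chi = \chi$, the initial condition $V(0, \msf q) = \int \chi(\msf q)$ is automatic.

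Next I would verify $V \in \mcl V(\mcl Q_2(\R_+))$. The $L^2$-Lipschitz bound $|V(t, \msf q) - V(t, \msf q')| \leq |\chi|_{\mathrm{Lip}}|\msf q - \msf q'|_{L^2}$ follows from the pointwise Lipschitz bound on $S_t\chi$ and Jensen. For monotonicity with respect to the cone order, note that if $\msf q \leq \msf q'$ in the sense of \eqref{e.def order}, then the associated probability measures are ordered in \emph{increasing convex order}, and this is preserved by the nondecreasing convex function $h = S_t\chi$; this can be checked by writing $h(z) = h(0) + \int_0^\infty h'(s) \mathbf{1}_{s < z}\d s$ and using that $h'$ is bounded and nondecreasing together with the characterization of $\mcl Q_2(\R_+)^*$ recalled before \eqref{e.biduality}. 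The time-growth bound $V(t, \msf q) - V(0, \msf q) \leq Ct$ is the integrated Hopf--Lax estimate $|S_t\chi(x) - \chi(x)| \leq Ct$, where $C$ depends only on $|\chi|_{\mathrm{Lip}}$ and on $\xi^*$ restricted to the bounded set $[0, |\chi|_{\mathrm{Lip}}]$.

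The PDE verification is the heart of the proof. Formally, convexity and Lipschitz continuity of $S_t\chi$ give that $(S_t\chi)'$ exists a.e.\ and lies in $L^\infty$, and for any Lipschitz perturbation $\kappa$ one checks
\[
    V(t,\msf q + \varepsilon \kappa) - V(t,\msf q) = \varepsilon \int_0^1 (S_t\chi)'(\msf q(u))\kappa(u)\d u + o(\varepsilon),
\]
so the Fréchet derivative in the sense of Definition~\ref{d.frechet} is $\nabla V(t, \msf q)(u) = (S_t\chi)'(\msf q(u))$. The target PDE $\partial_t V = \int \xi(\nabla V)$ then reduces under the integral $\int_0^1 \cdot\, \d u$ to the pointwise identity $\partial_t S_t\chi(x) = \xi((S_t\chi)'(x))$ satisfied by $S_t\chi$ a.e. To make this rigorous in the viscosity sense of \cite{chen2022hamilton}, I would approximate $\chi$ by smooth, strictly convex, nondecreasing $\chi_\varepsilon$ with $|\chi_\varepsilon|_{\mathrm{Lip}} \leq |\chi|_{\mathrm{Lip}}$, so that $S_t\chi_\varepsilon$ becomes sufficiently regular and $V_\varepsilon(t,\msf q) = \int S_t\chi_\varepsilon(\msf q)$ is a classical solution of \eqref{e.linear hj} with initial condition $\int \chi_\varepsilon(\msf q)$. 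Stability of viscosity solutions under locally uniform convergence then transfers the PDE to the limit $V$. Uniqueness is then automatic from the well-posedness of \eqref{e.hj frechet} established in \cite{chen2022hamilton}, applied to the concave and Lipschitz initial condition $\int \chi$.

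The main obstacle I anticipate is not the formal calculation but matching it to the specific viscosity-solution framework on $\mcl Q_2(\R_+)$: the cone has empty interior in $L^2$, so the comparison with test functions must be carried out on $\mcl Q_\upa(\R_+)$, and one must verify that the admissible class of test functions used in \cite{chen2022hamilton} indeed tests $V$ from above and below at potential non-differentiability points of $S_t\chi$. The convexity of $S_t\chi$ (inherited from $\chi$) is the key asset here, since it forces one-sided derivatives to exist everywhere and provides the semiconvexity needed to handle the test-function inequalities cleanly.
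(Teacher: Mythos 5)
Your proposal takes a genuinely different route from the paper's. The paper's proof goes through finite-dimensional projections: Proposition~\ref{p.visco of hj approx extended} uses the finite-dimensional Hopf formula of \cite[Proposition~6.3]{chen2023viscosity} to identify $(t,x)\mapsto \frac 1j\sum_{i=1}^j S_t\chi(x_i)$ as the unique viscosity solution of the projected equation on $\R_+^j$; Proposition~\ref{p.viscosity_restriction} checks that the restriction to $\mcl Q^j(\R_+)$ still solves the projected equation; and the theorem then follows from \cite[Theorem~4.6~(1)]{chen2022hamilton}, which characterizes the viscosity solution on $\mcl Q_2(\R_+)$ as $\lim_j v_j(t,p_j\msf q)$, together with the Lipschitz bound $|\int S_t\chi(l_jp_j\msf q)-\int S_t\chi(\msf q)|\ls |l_jp_j\msf q-\msf q|_{L^1}\to 0$. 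You instead propose to compute $\nabla V(t,\msf q)(u)=(S_t\chi)'(\msf q(u))$, smooth $\chi$ to $\chi_\eps$, claim $V_\eps$ is a \emph{classical} solution, and pass to the limit by stability. The paper's projection argument outsources the hardest analytic content to an already-established finite-dimensional approximation result; your route attacks the infinite-dimensional PDE head-on.

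The head-on route has a genuine gap at the step you flag as its heart. First, the claim that $V_\eps(t,\msf q)=\int S_t\chi_\eps(\msf q)$ is a \emph{classical} solution of \eqref{e.linear hj} is not justified and, as stated, appears to be false: even for smooth, strictly convex $\chi_\eps$, the Hopf formula $S_t\chi_\eps(x)=\sup_y\{xy-\chi_\eps^*(y)+t\xi(y)\}$ yields a viscosity solution that generically develops corners in $x$ after positive time (the maximizer in $y$ may jump), so $(S_t\chi_\eps)'$ exists only a.e.\ and $V_\eps$ is at best Gateaux differentiable in the sense of the paper's Proposition~\ref{p.characterization of nondecreasing linear}, not a classical solution of a PDE on a cone with empty interior in $L^2$. (Relatedly, your stronger claim of Fr\'echet differentiability of $V$ is not what the paper establishes --- only Gateaux, and only for differentiable integrands.) Second, the ``stability of viscosity solutions under locally uniform convergence'' you invoke would itself need to be proved within the nonstandard viscosity framework of \cite{chen2022hamilton}, where the solution notion on $\mcl Q_2(\R_+)$ is tied to finite-dimensional approximants and a restricted class of test points; establishing such a stability statement is not materially easier than the finite-dimensional reduction the paper carries out. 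Your verification of $V\in\mcl V(\mcl Q_2(\R_+))$ and your reading of the initial condition are fine, and the pointwise identity $\partial_t S_t\chi=\xi((S_t\chi)')$ a.e.\ is the right heuristic, but the passage from it to a viscosity solution on $\mcl Q_2(\R_+)$ is precisely the content that needs an argument, and here the projection route of Propositions~\ref{p.visco of hj approx extended}--\ref{p.viscosity_restriction} combined with \cite[Theorem~4.6~(1)]{chen2022hamilton} is the mechanism the paper supplies.
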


To prove Theorem~\ref{t.viscosity with linear initial condition}, we are going to use a notion of differentiability that is weaker than Fréchet differentiability.

\begin{definition}
    Let $\msf g : \mcl Q_2(S^D_+) \to \R$ and $\msf q \in \mcl Q_2(S^D_+)$. We say that $\msf g$ is Gateaux differentiable at $\msf q$ if the following conditions hold.
    
    \begin{enumerate}
        \item For every $\kappa \in L^2([0,1),S^D)$ such that $\msf q + \varepsilon \kappa \in \mcl Q_2(S^D_+)$ for $\varepsilon > 0$ small enough, the following limit exists and is finite,
        \begin{e*}
            \msf g '(\msf q,\kappa) = \lim_{\varepsilon \to 0^+} \frac{\msf g (\msf q + \varepsilon \kappa) -\msf g (\msf q)}{\varepsilon}.
        \end{e*}

        \item There exists a unique $\msf p \in L^2([0,1),S^D)$ such that for every $\kappa \in L^2([0,1),S^D)$ such that $\msf q + \varepsilon \kappa \in \mcl Q_2(S^D_+)$ for $\varepsilon > 0$ small enough, we have 
        \begin{e*}
             \msf g'(\msf q,\kappa) = \langle \msf p, \kappa \rangle_{L^2}.
        \end{e*}
    \end{enumerate}    
     In this case, we say that $\msf p$ is the Gateaux derivative of $\msf g$ at $\msf q$ and we denote it $\nabla \msf g(\msf q)$.
\end{definition}

Since a Fréchet differentiable function is also Gateaux differentiable and both derivatives are equal, there is no harm in using the symbol $\nabla$ to denote both the Gateaux and the Fréchet derivative. The Gateaux derivative allows us to characterize differentiable nondecreasing functions, roughly speaking a function  $\msf g : \mcl Q_2(S^D_+) \to \R$ is $\mcl Q_2(\R_+)^*$-nondecreasing if and only if for every $\msf q \in \mcl Q_2(\R_+)$, we have $\nabla \msf g(\msf q) \in \mcl Q_2(\R_+)$. In practice, this characterization is not exactly true at points in $\mcl Q_2(\R_+) - \mcl Q_\upa(\R_+)$ since the set of admissible directions at those points may not be rich enough.

\begin{proposition} \label{p.characterization of nondecreasing linear}
     Let $\chi : \R_+ \to \mathbb{R}$ be a Lipschitz function. The function $X : \msf q \mapsto \int_0^1 \chi(\msf q(u)) \d u$ is $\mcl Q_2(\R_+)^*$-nondecreasing if and only if $\chi$ is nondecreasing and convex.
\end{proposition}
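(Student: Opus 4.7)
My plan is to prove the two implications separately, relying on the characterization recalled in the excerpt that $\kappa \in \mcl Q_2(\R_+)^*$ if and only if $\int_t^1 \kappa(u)\,\d u \geq 0$ for every $t \in [0,1)$, and the consequent identification of $\mcl Q_2(\R_+)$ with the cone of nonnegative nondecreasing elements of $L^2([0,1))$.

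For the \emph{sufficiency}, I assume $\chi$ is nondecreasing and convex. Given $\msf q \leq \msf q'$ in $\mcl Q_2(\R_+)$, I introduce the straight-line interpolation $\msf q_\lambda = (1-\lambda)\msf q + \lambda\msf q' \in \mcl Q_2(\R_+)$ and set $F(\lambda) = X(\msf q_\lambda)$. After extending $\chi$ to a Lipschitz, convex, nondecreasing function on $\R$ (for example affinely with slope $\chi'_+(0)$ on $(-\infty,0]$) and mollifying, I obtain smooth Lipschitz convex nondecreasing approximations $\chi_\varepsilon \to \chi$ converging locally uniformly. Then $F_\varepsilon(\lambda) = \int_0^1 \chi_\varepsilon(\msf q_\lambda(u))\,\d u$ is smooth with
\begin{equation*}
    F_\varepsilon'(\lambda) = \langle \chi_\varepsilon' \circ \msf q_\lambda,\; \msf q' - \msf q \rangle_{L^2}.
\end{equation*}
Since $\chi_\varepsilon'$ is nondecreasing, nonnegative and bounded, and $\msf q_\lambda$ is nondecreasing and $\R_+$-valued, the composition $\chi_\varepsilon' \circ \msf q_\lambda$ belongs to $\mcl Q_2(\R_+)$, so the definition of the dual cone gives $F_\varepsilon'(\lambda) \geq 0$. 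Thus $F_\varepsilon(1) \geq F_\varepsilon(0)$, and dominated convergence as $\varepsilon \to 0$ yields $X(\msf q') \geq X(\msf q)$.

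For the \emph{necessity}, I test $X$ against explicit perturbations. Constant paths $\msf q \equiv a$ and $\msf q' \equiv a + h$ differ by the positive constant $h \in \mcl Q_2(\R_+)^*$, so the monotonicity hypothesis gives $\chi(a+h) \geq \chi(a)$, and hence $\chi$ is nondecreasing. For convexity, I fix $a > 0$ and $t \in (0,1)$ and consider the sign-changing step function
\begin{equation*}
    \kappa = -\mathbf 1_{[0,t)} + \tfrac{t}{1-t}\mathbf 1_{[t,1)}.
\end{equation*}
A direct computation yields $\int_s^1 \kappa(u)\,\d u = s$ for $s \in [0,t)$ and $\int_s^1 \kappa(u)\,\d u = t(1-s)/(1-t)$ for $s \in [t,1)$, both nonnegative, so $\kappa \in \mcl Q_2(\R_+)^*$. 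For $\varepsilon \in (0,a]$ the perturbed path $\msf q + \varepsilon \kappa$ (with $\msf q \equiv a$) is a nondecreasing $\R_+$-valued step function in $\mcl Q_2(\R_+)$, and monotonicity of $X$ gives
\begin{equation*}
    t\,\chi(a-\varepsilon) + (1-t)\,\chi\bigl(a + \tfrac{\varepsilon t}{1-t}\bigr) \geq \chi(a).
\end{equation*}
Writing $s = a - \varepsilon$ and $s' = a + \varepsilon t/(1-t)$, one checks $a = ts + (1-t)s'$; and as $(a,\varepsilon,t)$ varies over admissible parameters, every triple $0 \leq s < a < s'$ is realized, which is precisely convexity of $\chi$ on $\R_+$.

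The main subtlety is the choice of $\kappa$ in the necessity argument: it is tuned so that $\int_0^1 \kappa(u)\,\d u = 0$ (preserving the average) while still lying in the dual cone, and this precisely forces the monotonicity hypothesis to output a convexity inequality. Constant perturbations detect only monotonicity and carry no second-order information, so the sign-changing tilt is essential.
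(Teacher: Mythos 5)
Your proof is correct. The sufficiency direction takes essentially the same route as the paper: mollify $\chi$ to a smooth convex nondecreasing approximant, differentiate $\lambda \mapsto X(\msf q_\lambda)$ along the segment, and observe that $\chi_\varepsilon' \circ \msf q_\lambda \in \mcl Q_2(\R_+)$ pairs nonnegatively against $\msf q' - \msf q \in \mcl Q_2(\R_+)^*$; the only cosmetic difference is that you extend $\chi$ affinely to $\R$ before mollifying, while the paper mollifies with a kernel supported in $[1,2]$ so that the convolution stays on $\R_+$. Your necessity direction, however, is genuinely different and notably more elementary. The paper's Step 3 (combined with Steps 4--5) works infinitesimally: it Gateaux-differentiates $X$ at a path $\msf q \in \mcl Q_\upa(\R_+)$, obtains $\langle \nabla\chi \circ \msf q, \kappa \rangle_{L^2} \geq 0$ for all $\kappa \in \mcl Q_2(\R_+)^*$ by a density argument, appeals to the bidual identity \eqref{e.biduality} to conclude $\nabla\chi \circ \msf q \in \mcl Q_2(\R_+)$, and then needs a surjectivity/mollification step to pass from this to monotonicity of $\nabla\chi$ on all of $\R_+$. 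You instead test the monotonicity of $X$ directly against two explicit families of finite perturbations: constants (which detect monotonicity of $\chi$) and the mean-zero step $\kappa = -\mathbf 1_{[0,t)} + \tfrac{t}{1-t}\mathbf 1_{[t,1)} \in \mcl Q_2(\R_+)^*$ applied to a constant path (which, after the change of variables $s = a-\varepsilon$, $s' = a + \varepsilon t/(1-t)$, $a = ts + (1-t)s'$, is literally the two-point convexity inequality). This avoids Gateaux differentiability, the density argument, the bidual characterization, and the mollification step entirely for that direction, and makes it transparent why a sign-changing perturbation with zero average is what carries the second-order information. One small thing worth making explicit: your argument produces the convexity inequality only at interior triples $0 \leq s < a < s'$ with $a > 0$; the case $a = 0$ is degenerate (it would force $s < 0$) but is vacuous, and the conclusion for all of $\R_+$ then follows since $\chi$ is continuous.
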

\begin{proof} 
    We start by showing the desired equivalence under the additional assumption that $\chi$ is differentiable, and then we proceed by approximation.

    \noindent Step 1. We assume that $\chi$ is differentiable on $\R_+$ and we show that $X$ is Gateaux differentiable on $\mcl Q_2(\R_+)$ and satisfies 
    \begin{equation*}
        \nabla X(\msf q) = \nabla \chi \circ \msf q.
    \end{equation*}
    \noindent Fix $\msf q \in \mcl Q_2(\R_+)$, let $\kappa \in L^2$ such that for $\varepsilon > 0$ small enough $\msf q + \varepsilon \kappa \in \mcl Q_2(\R_+)$. For every $u \in [0,1)$,  
    \begin{equation*}
        \left| \frac{\chi(\msf q(u)+\varepsilon \kappa(u)) -\chi(\msf q(u))}{\varepsilon} \right| \leq |\chi |_\text{Lip} \kappa(u).
    \end{equation*}
    and we have as $\varepsilon \to 0$,
    \begin{equation*}
       \frac{\chi(\msf q(u)+\varepsilon \kappa(u)) -\chi(\msf q(u))}{\varepsilon} \to \nabla \chi (\msf q(u)) \kappa(u).
    \end{equation*}
    By dominated convergence, it follows that 
    \begin{equation*}
        X(\msf q+\varepsilon \kappa) = X(\msf q) + \varepsilon \int_0^1  \nabla \chi (\msf q(u)) \kappa(u) \d u + o(\varepsilon).
    \end{equation*}
    So $X$ is Gateaux differentiable at $\msf q$ and $\nabla X(\msf q) = \nabla \chi \circ \msf q  \in L^\infty([0,1),\R)$. 

    \noindent Step 2. We assume that $\chi$ is differentiable on $\R_+$ and we show that if $\chi$ is nondecreasing and convex on $\R_+$, then $X$ is $\mcl Q_2(\R_+)^*$-nondecreasing on $\mcl Q_2(\R_+)$.

    \noindent If $\chi$ is nondecreasing and convex, then $\nabla \chi : \R \to \R_+$ is nondecreasing and $\R_+$-valued. In particular, for every $\msf q \in \mcl Q_2(\R_+)$, we have 
    \begin{equation*}
        \nabla X(\msf q) = \nabla \chi \circ \msf q \in \mcl Q_2(\R_+).
    \end{equation*}
    Thus, for every $\msf q, \msf q' \in \mcl Q_2(\R_+)$ such that $\msf q' - \msf q \in \mcl Q_2(\R_+)^*$, we have 
    \begin{equation*}
        X(\msf q') - X(\msf q) = \int_0^1 \langle \nabla X(\lambda \msf q' +(1-\lambda) \msf q),\msf q'- \msf q \rangle_{L^2} \d \lambda \geq 0.
    \end{equation*}

    \noindent Step 3.  We assume that $\chi$ is differentiable on $\R_+$ and we show that if $X$ is $\mcl Q_2(\R_+)^*$-nondecreasing on $\mcl Q_2(\R_+)$, then $\chi$ is nondecreasing and convex on $\R_+$.
    
    \noindent Let $\msf q \in \mcl Q_\upa(\R_+) \cap L^2$, for every smooth function $\kappa \in \mcl Q_2(\R_+)^*$ and every $\varepsilon > 0$ small enough we have $\msf q+ \varepsilon \kappa \in \mcl Q_2(\R_+)$ and 
    \begin{e*}
        \frac{X(\msf q+\varepsilon \kappa) - X(\msf q)}{\varepsilon} \geq 0.
    \end{e*}
    Letting $\varepsilon \to 0$, we obtain 
    \begin{e*}
       \langle \nabla \chi \circ \msf q, \kappa \rangle_{L^2} \geq 0. 
    \end{e*}
    By density, this last display is in fact valid for all $\kappa \in \mcl Q_2^*$. Therefore $\nabla \chi \circ \msf q \in \mcl Q_2(\R_+)$. This ensures that $\nabla \chi$ is $\R_+$-valued and nondecreasing, therefore $\chi$ is convex and nondecreasing.

    
    \noindent Step 4. Approximation by differentiable functions.

    \noindent We let $\eta \in \mcl C^\infty(\R_+,\R)$ be a smooth function supported on $[1,2]$ that takes nonnegative values and such that 
    \begin{equation*}
        \int_{\R_+} \eta(x) \d x = 1.
    \end{equation*}
    For every $\varepsilon > 0$, let $\eta_\varepsilon(x) = \frac{1}{\varepsilon} \eta \left( \frac{x}{\varepsilon} \right)$. We define,
    \begin{equation*}
        \chi_\varepsilon(x) = \int_{\R_+} \chi(x+y) \eta_\varepsilon(y) \d y,
    \end{equation*}
    For every $x \in \R_+$, we have 
    \begin{align*}
        |\chi_\varepsilon(x) - \chi(x)| &= \left| \int_{\R_+} \chi(x+y) \eta_\varepsilon(y) \d y - \int_{\R_+} \chi(x) \eta_\varepsilon(y) \d y\right| \\
                                        &\leq \int_{\R_+} |\chi(x+y) - \chi(x)| \eta_\varepsilon(y) \d y \\
                                        &\leq \int_{\R_+} |\chi |_\text{Lip} y \eta_\varepsilon(y) \d y \\
                                        &= \varepsilon |\chi |_\text{Lip} \int_{\R_+} y \eta(y) \d y.        
    \end{align*}  
    Therefore as $\varepsilon \to 0$, $\chi_\varepsilon \to \chi$ uniformly on $\R_+$. In addition, for every $x \in \R_+$ and $h \in (-\varepsilon,\varepsilon)$,
    \begin{e*}
        \frac{\chi_\varepsilon(x+h)-\chi_\varepsilon(x)}{h} = \int_x^{x+2\varepsilon} \chi(z) \frac{\eta_\varepsilon(z-x-h)-\eta_\varepsilon(z-x)}{h} \d z.
    \end{e*}
    We have, uniformly over $z \in [x,x+2\varepsilon]$,
    \begin{e*}
       \lim_{h \to 0} \frac{\eta_\varepsilon(z-x-h)-\eta_\varepsilon(z-x)}{h} = \nabla \eta_\varepsilon(z-x).
    \end{e*}
    Therefore, $\chi_\varepsilon$ is differentiable at $x$ and 
    \begin{e*}
        \nabla \chi_\varepsilon(x)  = \int_x^{x+2\varepsilon} \chi(z) \nabla \eta_\varepsilon(z-x) \d z.
    \end{e*}
    Thus, the sequence $(\chi_\varepsilon)_{\varepsilon}$ is a sequence of Lipschitz differentiable functions on $\R_+$ that converge uniformly on $\R_+$ towards $\chi$.
    
    \noindent Step 5. Conclusion.

    \noindent Let $(\chi_\varepsilon)_{\varepsilon}$ be the sequence built in Step 4. Set 
    \begin{equation*}
        X_\varepsilon(\msf q) = \int_0^1 \chi_\varepsilon(\msf q(u)) \d u.
    \end{equation*}
    We have,
    \begin{equation*}
        X_\varepsilon(\msf q) = \int_{\R_+} X(\msf q+y) \eta_\varepsilon(y) \d y,
    \end{equation*}
    where $\msf q + y$ denotes the path obtained by adding the constant $y \in \R_+$ to the values of $\msf q$.  According to Step 4, $\chi_\varepsilon$ is differentiable on $\R_+$. 
    
    If $\chi$ is nondecreasing and convex, then so is $\chi_\varepsilon$. According to Step 2, in this case $X_\varepsilon$ is $\mcl Q_2(\R_+)^*$-nondecreasing and letting $\varepsilon \to 0$, we discover that $X$ is $\mcl Q_2(\R_+)^*$-nondecreasing. 
    
    Conversely, if $X$ is $\mcl Q_2(\R_+)^*$-nondecreasing, then so is $X_\varepsilon$, using Step 3 we discover that $\chi_\varepsilon$ is convex and nondecreasing, and letting $\varepsilon \to 0$ we obtain that $\chi$ is convex and nondecreasing.
\end{proof}

\begin{proposition} \label{p. int S_t chi is nondecreasing}
    Let $\chi : \R_+ \to \R$ be a Lipschitz, nondecreasing and convex function. For every $t \geq 0$, the function 
    \begin{equation*}
        (t,\msf q) \mapsto \int_0^1 S_t \chi(\msf q(u)) \d u
    \end{equation*}
    belongs to $\mcl V(\mcl Q_2(\R_+))$.
\end{proposition}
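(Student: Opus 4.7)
The plan is to verify directly the three defining conditions of the class $\mcl V(\mcl Q_2(\R_+))$: namely uniform Lipschitz continuity in the spatial variable, $\mcl Q_2(\R_+)^*$-monotonicity, and the uniform upper bound on the forward time difference quotient. The Lipschitz constant will turn out to be $|\chi|_{\text{Lip}}$, independent of $t$, which is convenient.

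First I would analyze the spatial function $S_t\chi : \R_+ \to \R$. Using the Hopf representation \eqref{e.chi hopf}, which applies because $\chi$ is convex, one can write
\begin{e*}
S_t\chi(x) = \sup_{y \in \R_+} \bigl\{ xy - \chi^*(y) + t\xi(y)\bigr\}.
\end{e*}
Since each map $x \mapsto xy - \chi^*(y) + t\xi(y)$ is affine and nondecreasing (as $y \ge 0$), the supremum is convex and nondecreasing in $x$. Next, using the original definition
\begin{e*}
S_t\chi(x) = \sup_{y \in \R_+} \bigl\{ \chi(x+y) - t\xi^*(y/t)\bigr\},
\end{e*}
the inequality $|\chi(x+y) - \chi(x'+y)| \leq |\chi|_{\text{Lip}}|x-x'|$ passed through the supremum shows that $S_t\chi$ is $|\chi|_{\text{Lip}}$-Lipschitz. (I should also check that $S_t\chi$ is finite, which follows from the superlinear growth of $\xi^*$ coming from the fact that $\xi$ is a convergent power series on $\R$.)

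With these properties of $S_t\chi$ in hand, the $\mcl Q_2(\R_+)^*$-monotonicity of $\msf q \mapsto \int S_t\chi(\msf q)$ follows immediately from Proposition~\ref{p.characterization of nondecreasing linear}. The Lipschitz continuity with the uniform constant $|\chi|_{\text{Lip}}$ follows from
\begin{e*}
\Bigl|\int S_t\chi(\msf q) - \int S_t\chi(\msf q')\Bigr| \leq |\chi|_{\text{Lip}} \int_0^1 |\msf q(u) - \msf q'(u)| \d u \leq |\chi|_{\text{Lip}} \, \|\msf q - \msf q'\|_{L^2},
\end{e*}
by Cauchy-Schwarz.

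Finally, for the time-difference quotient bound, I would estimate $S_t\chi(x) - \chi(x)$ pointwise. Writing
\begin{e*}
S_t\chi(x) - \chi(x) = \sup_{y \in \R_+} \bigl\{ \chi(x+y) - \chi(x) - t\xi^*(y/t)\bigr\} \leq \sup_{y \in \R_+}\bigl\{ |\chi|_{\text{Lip}}\, y - t\xi^*(y/t)\bigr\},
\end{e*}
and changing variable $z = y/t$,
\begin{e*}
S_t\chi(x) - \chi(x) \leq t \sup_{z \in \R_+} \bigl\{|\chi|_{\text{Lip}} z - \xi^*(z)\bigr\} = t\, \xi^{**}(|\chi|_{\text{Lip}}) = t\,\xi(|\chi|_{\text{Lip}}),
\end{e*}
using the convexity of $\xi$ on $\R_+$. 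Integrating against $\msf q$ yields
\begin{e*}
\frac{\int S_t\chi(\msf q) - \int \chi(\msf q)}{t} \leq \xi(|\chi|_{\text{Lip}}),
\end{e*}
uniformly in $t > 0$ and $\msf q$, which is exactly the last defining condition of $\mcl V(\mcl Q_2(\R_+))$.

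I do not expect any serious obstacle; the only points requiring a touch of care are checking that the supremum defining $S_t\chi$ is finite (which is where the superlinear growth of $\xi^*$ enters, a consequence of $\xi$ being a non-affine convergent power series) and justifying $\xi^{**} = \xi$ on $\R_+$ used in the last display, but this is standard Fenchel-Moreau given the convexity hypothesis \ref{i.assume_2_convexity}.
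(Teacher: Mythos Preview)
Your proof is correct and follows essentially the same route as the paper: both use the Hopf representation \eqref{e.chi hopf} to deduce that $S_t\chi$ is convex on $\R_+$, and then invoke Proposition~\ref{p.characterization of nondecreasing linear} to obtain the $\mcl Q_2(\R_+)^*$-monotonicity of $\msf q \mapsto \int S_t\chi(\msf q)$. The only difference is that the paper dispatches the remaining conditions (uniform Lipschitz constant in $\msf q$, nondecreasing in $x$, and the time-difference quotient bound) in one line by citing that $S_t\chi \in \mcl V(\R_+)$ as a known property of the Hopf--Lax semigroup from \cite{chen2023viscosity}, whereas you verify each of these by hand; your explicit bound $\frac{1}{t}\bigl(\int S_t\chi(\msf q) - \int \chi(\msf q)\bigr) \le \xi(|\chi|_{\text{Lip}})$ is exactly what that citation encodes. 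One minor remark: for the last step you only need the inequality $\sup_{z \ge 0}\{az - \xi^*(z)\} \le \xi(a)$, which is immediate from $\xi^*(z) \ge az - \xi(a)$, so you do not actually need the full Fenchel--Moreau identity $\xi^{**} = \xi$ on the cone.
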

\begin{proof}
    Since $\chi$ is convex, $S_t \chi$ admits the Hopf representation \cite[Proposition~6.3]{chen2023viscosity} that is, for every $x \in \R_+$, we have,
    \begin{e*}
        S_t \chi(x) = \sup_{y \in \R_+} \left\{ xy - \chi^*(y) +t \xi(y) \right\}.
    \end{e*}
    In particular, $S_t \chi$ is the supremum of a family of affine functions of $x$, so $S_t \chi$ is convex on $\R_+$. By definition $S_t \chi \in \mcl V(\R_+)$, so $S_t \chi$ is Lipschitz and nondecreasing. It then follows from Proposition \ref{p.characterization of nondecreasing linear} that $\msf q \mapsto \int_0^1 S_t \chi(\msf q(u)) \d u$ is $\mcl Q_2(\R_+)^*$-nondecreasing and thus $(t,\msf q) \mapsto \int_0^1 S_t \chi(\msf q(u)) \d u$ belongs to $\mcl V(\mcl Q_2(\R_+))$. 
\end{proof}
For every $j \geq 1$, define 
\begin{equation}
    \mcl Q^j(\R_+) =\left\{ (x_i)_{1 \leq i \leq j} \in \R_+^j \big| \,  x_{i}-x_{i-1} \in \R_+ \right\}.
\end{equation}
The cone $\mcl Q^j(\R_+)$ is the set of nondecreasing sequences of $\R_+$ with $j$ terms. It is embedded in $\R^j$ and we equip it with the normalized Euclidean scalar product,
\begin{e*}
    \langle x,y\rangle_j = \frac{1}{j} \sum_{i = 1}^j x_i y_i.
\end{e*}
%
%
One can think of $\mcl Q^j(\R_+)$  as a finite dimensional approximation of $\mcl Q_1(\R_+)$. Given $x \in  \mcl Q^j(\R_+)$, we define a path $l_j x \in \mcl Q_1(\R_+)$ by setting 
\begin{equation*}
    l_j x = \sum_{i = 1}^j x_i \mathbf{1}_{\left[\frac{i-1}{j},\frac{i}{j}\right)}.
\end{equation*}
Conversely, given a path $\msf q \in \mcl Q_1(\R_+)$, we define $p_j \msf q \in \mcl Q^j(\R_+)$ by setting 
\begin{equation*}
    (p_j \msf q)_i = j \int_{\frac{i-1}{j}}^\frac{i}{j} \msf q(u) \d u.
\end{equation*}
The linear maps $l_j$ and $p_j$ are adjoint in the following sense. 
\begin{equation*}  
    \left \langle \msf q,l_jx \right\rangle_{L^2} =  \left \langle p_j \msf q,x \right\rangle_{j}.
\end{equation*}
It follows that for every $\mcl Q_2(\R_+)^*$-nondecreasing function $\varphi : \mcl Q_2(\R_+) \to \R$, the function $\varphi \circ l_j$ is $(\mcl Q^j(\R_+))^*$-nondecreasing on $\mcl Q^j(\R_+)$. We also define for every $x \in \R^j$,
\begin{equation*}
    H_j(x) = \frac{1}{j} \sum_{i =1}^j \xi(j x_i),
\end{equation*}
the function $H_j : \R^j \to \R$ is locally Lipschitz, $\R_+^j$-nondecreasing on $\R_+^j$ and $(\mcl Q^j(\R_+))^*$-nondecreasing on $\mcl Q^j(\R_+)$. Given $X \subset \R^j$, we let $\mathrm{int}(X)$ denote the interior of $X$.
\begin{proposition} \label{p.visco of hj approx extended}
    Let  $\chi : \R_+ \to \R$ be a Lipschitz, nondecreasing and convex function. The function $(t,x) \mapsto \frac{1}{j} \sum_{i=1}^j S_t \chi(x_i)$ belongs to $\mcl V(\R_+^j)$ and is the unique viscosity solution of 
    \begin{equation} \label{e.hj_approx_extended}
        \begin{cases}
        \partial_t \overline v_j - H_j(\nabla \overline v_j) = 0 \text{ on } (0,+\infty) \times \mathrm{int}(\R_+^j)\\
         \overline v_j(0,x) = \frac{1}{j} \sum_{i=1}^j \chi(x_i) \text{ on } \R_+^j.
        \end{cases}
    \end{equation}
\end{proposition}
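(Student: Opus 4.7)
The plan is to verify three things: (i) $\overline v_j \in \mcl V(\R_+^j)$; (ii) $\overline v_j$ is a viscosity solution of \eqref{e.hj_approx_extended}; (iii) uniqueness within $\mcl V(\R_+^j)$. First I would extract the corresponding scalar estimates for $S_t\chi$ directly from its definition: $S_t\chi$ is $|\chi|_{\text{Lip}}$-Lipschitz and nondecreasing, while the Lipschitz bound on $\chi$ combined with the change of variables $y = tz$ shows that $S_t\chi(x) - \chi(x)$ is bounded above uniformly in $x$ by a finite multiple of $t$. Averaging coordinate-wise, $\overline v_j(t,\cdot)$ inherits $\R_+^j$-monotonicity (the cone $\R_+^j$ is self-dual under $\langle \cdot,\cdot\rangle_j$), Lipschitz continuity with constant $|\chi|_{\text{Lip}}$ with respect to the norm induced by $\langle\cdot,\cdot\rangle_j$ (via Cauchy-Schwarz applied to $\frac{1}{j}\sum_i|x_i - y_i|$), and bounded upper time-increment, yielding (i). The initial condition $\overline v_j(0,x) = \frac{1}{j}\sum_i \chi(x_i)$ is immediate.

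For (ii), the key observation is that $\overline v_j$ is precisely the Hopf-Lax formula attached to $(H_j,\psi_j)$ with $\psi_j(x) = \frac{1}{j}\sum_i \chi(x_i)$. Computing the Legendre transform,
\begin{equation*}
    H_j^*(q) = \sup_{p \in \R_+^j}\left\{\sum_{i=1}^j p_i q_i - \tfrac{1}{j}\sum_{i=1}^j \xi(jp_i)\right\} = \tfrac{1}{j}\sum_{i=1}^j \xi^*(q_i),
\end{equation*}
the Hopf-Lax formula factorizes coordinate-wise and yields
\begin{equation*}
    \sup_{z \in \R_+^j}\left\{\psi_j(x+z) - t H_j^*\!\left(\tfrac{z}{t}\right)\right\} = \tfrac{1}{j}\sum_{i=1}^j \sup_{z_i \geq 0}\left\{\chi(x_i+z_i) - t\xi^*\!\left(\tfrac{z_i}{t}\right)\right\} = \overline v_j(t,x).
\end{equation*}
Since $H_j$ is convex (as $\xi$ is) and $\psi_j$ is Lipschitz, the standard Hopf-Lax theory guarantees that $\overline v_j$ is a viscosity solution of \eqref{e.hj_approx_extended} on $(0,+\infty)\times\mathrm{int}(\R_+^j)$. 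An equivalent route is to tensorize the fact that $(t,s)\mapsto S_t\chi(s)$ is the viscosity solution of \eqref{e.rshj}: the separable structure of both $\overline v_j$ and $H_j$ allows the viscosity inequalities to be read off coordinate-by-coordinate at any interior test point.

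For (iii), uniqueness in $\mcl V(\R_+^j)$ reduces to a comparison principle for viscosity solutions of HJ equations on the cone $\R_+^j$. The three defining conditions of $\mcl V(\R_+^j)$—Lipschitz continuity in $x$, $\R_+^j$-monotonicity, and controlled upper time-growth—are precisely what is needed to run a doubling-of-variables comparison argument in this setting, in the same spirit as the well-posedness framework of \cite{chen2022hamilton}. The step I expect to require the most care is (ii): one must reconcile the constraint $z \in \R_+^j$ in the Hopf-Lax formula with the interior viscosity property, and, in the alternative tensorization route, handle points where $S_t\chi$ fails to be differentiable. At interior test points, however, the $\R_+^j$ constraint is locally non-binding, so the standard theory applies after a routine localization.
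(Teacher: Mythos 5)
Your proof is correct in its essentials, but it takes a different route from the paper. The paper invokes the \emph{Hopf formula} for Hamilton--Jacobi equations on the cone $\R_+^j$, as established in \cite[Proposition~6.3]{chen2023viscosity}: since the initial condition $\varphi_j(x)=\tfrac1j\sum_i\chi(x_i)$ is convex, that proposition identifies the unique viscosity solution of \eqref{e.hj_approx_extended} directly as
$\overline v_j(t,x)=\sup_{y\in\R_+^j}\{\langle x,y\rangle_j - \varphi_j^*(y) + tH_j(y)\}$,
and then the whole proof reduces to computing $\varphi_j^*(y)=\tfrac1j\sum_i\chi^*(y_i)$, factorizing coordinate-by-coordinate, and recognizing $S_t\chi$ via its Hopf representation \eqref{e.chi hopf}. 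There is nothing left to verify about the viscosity property or uniqueness---both are subsumed in the cited theorem. You instead use the \emph{Hopf--Lax} representation (which exploits convexity of $\xi$, hence of $H_j$, rather than of $\chi$), factorize $H_j^*$ coordinate-wise, and then argue separately that the Hopf--Lax formula is a viscosity solution and that uniqueness holds. That split requires more justification than you provide: the Hopf--Lax formula with $z$ constrained to the cone $\R_+^j$ is not ``standard'' theory on $\R^n$, and you would need to cite the cone version (which is also available in \cite{chen2023viscosity}, so this is fixable but is currently a gap). Your ``locally non-binding'' remark about the constraint is also not correct as stated---the optimal $z$ can sit on the boundary $z_i=0$ for $x$ in the interior of the cone---but this does not in fact threaten the interior viscosity property, so it is a misstatement rather than an error. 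The tensorization variant you sketch is more delicate than you suggest, since viscosity inequalities do not separate coordinate-wise for general test functions. Overall: same key factorization idea, but you are re-deriving well-posedness ingredients that the paper simply cites, and the missing citations for the cone Hopf--Lax formula and the comparison principle are what would need to be supplied.
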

%
 %
\begin{proof} 
    The initial condition $\varphi_j : x \mapsto \frac{1}{j} \sum_{i=1}^j \chi(x_i)$ is convex on $\R_+^j$. We have,
    \begin{align*}
        \varphi_j^*(y) &= \sup_{x \in \mcl \R_+^j} \left\{ \langle x,y \rangle_j - \frac{1}{j}\sum_{i=1}^j \chi(x_i)\right\} \\
                       &= \sup_{x_1 \in \mcl \R_+} \dots \sup_{x_j \in \mcl \R_+} \left\{ \frac{1}{j} \sum_{i=1}^j x_iy_i -  \chi(x_i)\right\} \\
                       &= \frac{1}{j} \sum_{i=1}^j \sup_{x_i \in \mcl \R_+} \left\{ x_iy_i -  \chi(x_i)\right\} \\
                       &= \frac{1}{j} \sum_{i=1}^j \chi^*(y_i).
    \end{align*}
    According to \cite[Proposition~6.3]{chen2023viscosity}, the unique viscosity solution $\overline v_j$ of \eqref{e.hj_approx_extended} admits the Hopf representation, 
    \begin{e*}
        \overline v_j(t,x) = \sup_{y \in \R_+^j} \left\{ \langle x,y \rangle_j - \varphi_j^*(y) +tH_j(y)  \right\}.
    \end{e*}
    Thus,
    \begin{align*}
        \overline v_j(t,x) &= \sup_{y \in \R_+^j} \left\{ \frac{1}{j} \sum_{i=1}^j \left( x_iy_i - \chi^*(x_i) +t\xi(y_i) \right)  \right\} \\
                           &= \sup_{y_1 \in \R_+} \dots \sup_{y_j \in \R_+} \left\{ \frac{1}{j} \sum_{i=1}^j \left( x_iy_i - \chi^*(x_i) +t\xi(y_i) \right) \right\} \\
                           &= \frac{1}{j} \sum_{i=1}^j \sup_{y_i \in \R_+} \left\{ x_iy_i - \chi^*(x_i) +t\xi(y_i) \right\} \\
                           &= \frac{1}{j} \sum_{i=1}^j S_t \chi(x_i).                            
    \end{align*}
    By definition, $\overline v_j \in \mcl V(\R_+^j)$, this concludes the proof.
\end{proof}
\begin{proposition} \label{p.viscosity_restriction}
    Let  $\chi : \R_+ \to \R$ be a Lipschitz, nondecreasing and convex function. The function $(t,x) \mapsto \frac{1}{j} \sum_{i=1}^j S_t \chi(x_i)$ belongs to $\mcl V(\mcl Q^j(\R_+))$ and is the unique viscosity solution of 
    \begin{equation} \label{e.hjapprox}
        \begin{cases}
        \partial_t v_j - H_j(\nabla v_j) = 0 \text{ on } (0,+\infty) \times \mathrm{int}(\mcl Q^j(\R_+)) \\
        u_j(0,x) = \frac{1}{j} \sum_{i=1}^j \chi(x_i) \text{ on } \mcl Q^j(\R_+).
        \end{cases}
    \end{equation}
\end{proposition}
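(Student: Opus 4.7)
The plan is to transfer Proposition~\ref{p.visco of hj approx extended} from $\R_+^j$ to the cone $\mcl Q^j(\R_+)$, using restriction for existence and a symmetrization argument for uniqueness.

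First, to show that $\overline v_j(t,x) = \frac{1}{j}\sum_{i=1}^j S_t\chi(x_i)$ belongs to $\mcl V(\mcl Q^j(\R_+))$, only the $(\mcl Q^j(\R_+))^*$-monotonicity requires a new argument, as the Lipschitz and growth bounds are inherited from the $\mcl V(\R_+^j)$ statement of Proposition~\ref{p.visco of hj approx extended}. The cone $(\mcl Q^j(\R_+))^*$ is the finite-dimensional analog of the cone appearing in \cite[Lemma~3.5]{chenmourrat2023cavity}: an element $\kappa \in \R^j$ lies in $(\mcl Q^j(\R_+))^*$ if and only if $\sum_{i=k}^j \kappa_i \geq 0$ for every $k$. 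Given $x \in \mcl Q^j(\R_+)$ and $\kappa \in (\mcl Q^j(\R_+))^*$ with $x+\kappa \in \mcl Q^j(\R_+)$, the convexity of $S_t\chi$ (established in Proposition~\ref{p. int S_t chi is nondecreasing} via the Hopf representation) yields
\begin{equation*}
    \overline v_j(t,x+\kappa) - \overline v_j(t,x) \geq \frac{1}{j} \sum_{i=1}^j a_i \kappa_i,
\end{equation*}
where $a_i := \nabla_+ S_t\chi(x_i)$ is a nonnegative, nondecreasing sequence (by convexity and monotonicity of $S_t\chi$ and the fact that $x_1 \leq \dots \leq x_j$). An Abel summation $\sum_i a_i \kappa_i = a_1 T_1 + \sum_{i=2}^j (a_i - a_{i-1}) T_i$, with $T_i = \sum_{l \geq i} \kappa_l \geq 0$, then shows that this last sum is nonnegative, giving the desired monotonicity.

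Existence is free by restriction: since $\mathrm{int}(\mcl Q^j(\R_+)) \subset \mathrm{int}(\R_+^j)$ is open and the viscosity property is local, $\overline v_j$ is automatically a viscosity solution of \eqref{e.hjapprox} on $(0,+\infty) \times \mathrm{int}(\mcl Q^j(\R_+))$ with the correct initial condition.

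The main obstacle is uniqueness, which I would handle by symmetrization. Given any $u \in \mcl V(\mcl Q^j(\R_+))$ solving \eqref{e.hjapprox}, extend it by $\tilde u(t,x) := u(t, \mathrm{sort}(x))$ on $\R_+ \times \R_+^j$, where $\mathrm{sort}$ denotes the nondecreasing rearrangement. Sorting preserves coordinate-wise order (if $y \geq x$ coordinate-wise then $y_{(i)} \geq x_{(i)}$ for every $i$, by the minmax characterization $y_{(i)} = \min_{|S|=i} \max_{k \in S} y_k$) and $\R_+^j \subset (\mcl Q^j(\R_+))^*$, so the monotonicity of $u$ forces coordinate-wise monotonicity of $\tilde u$; $\mathrm{sort}$ is also $1$-Lipschitz in $\ell^2$, so $\tilde u \in \mcl V(\R_+^j)$. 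The initial datum $\frac{1}{j}\sum \chi(x_i)$ is permutation-invariant, hence preserved under sorting. At any $x \in \mathrm{int}(\R_+^j)$ with pairwise distinct entries, $\mathrm{sort}$ is locally a fixed permutation $\sigma$, and the permutation-invariance of $H_j$ translates the viscosity condition for $\tilde u$ at $x$ into that for $u$ at $\sigma(x) \in \mathrm{int}(\mcl Q^j(\R_+))$. Since generic points are dense in $\mathrm{int}(\R_+^j)$, upper/lower semicontinuous envelope arguments upgrade this to the viscosity property on all of $\mathrm{int}(\R_+^j)$. Uniqueness in Proposition~\ref{p.visco of hj approx extended} then forces $\tilde u = \overline v_j$, and hence $u = \overline v_j|_{\mcl Q^j(\R_+)}$. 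I expect the only delicate verification to be the passage of the viscosity property across the diagonal hyperplanes $\{x_i = x_{i+1}\}$, but the permutation symmetry of both $H_j$ and the initial condition is precisely the structural property that makes this transfer work.
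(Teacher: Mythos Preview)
Your argument for membership in $\mcl V(\mcl Q^j(\R_+))$ via Abel summation is correct and is a clean, self-contained alternative to the paper's route, which instead writes $\overline v_j(t,x) = \int S_t\chi(l_j x)$ and pulls back the infinite-dimensional monotonicity of Proposition~\ref{p. int S_t chi is nondecreasing} along $l_j$. Your restriction argument for existence is essentially the same as the paper's.

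The genuine difference is uniqueness. The paper does not attempt a symmetrization; it invokes \cite[Theorem~1.2]{chen2023viscosity} directly, which already gives a comparison principle and uniqueness for \eqref{e.hjapprox} on the cone $\mcl Q^j(\R_+)$ within the class $\mcl V(\mcl Q^j(\R_+))$. That result is precisely designed to handle the boundary faces of the cone, and citing it closes the proof in one line.

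Your symmetrization route, by contrast, has a real obstacle at the step you flag. The claim that the viscosity property on the dense open set of points with distinct coordinates ``upgrades by upper/lower semicontinuous envelope arguments'' to all of $\mathrm{int}(\R_+^j)$ is not a general fact: codimension-one sets are not removable for first-order Hamilton--Jacobi equations. For instance, $u(x)=|x_1|$ is a viscosity solution of $|\partial_{1} u|=1$ away from $\{x_1=0\}$ but fails the supersolution test on that hyperplane. In your situation the diagonal $\{x_i=x_{i+1}\}$ is exactly the image under a permutation of the boundary face $\{x_i=x_{i+1}\}$ of $\mcl Q^j(\R_+)$, and the viscosity property of $u$ is only assumed in $\mathrm{int}(\mcl Q^j(\R_+))$; the permutation symmetry of $H_j$ does not by itself supply the missing boundary information. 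Making this passage rigorous would require either a separate removability argument tailored to the symmetric structure (which is delicate because $H_j$ need not be convex) or, in effect, a comparison principle on the cone---at which point you are back to the content of \cite[Theorem~1.2]{chen2023viscosity}.
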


\begin{proof}
    According to \cite[Theorem 1.2]{chen2023viscosity}, \eqref{e.hjapprox} admits a unique viscosity solution in $\mcl V(\mcl Q^j(\R_+))$. Therefore, according to Proposition~\ref{p.visco of hj approx extended}, it is enough to check that $\overline{v}_j \big|_{\R_+ \times \mcl Q^j(\R_+)}$ is a viscosity solution of $\eqref{e.hjapprox}$ and belongs to $\mcl V(\mcl Q^j(\R_+))$. Let $(t,x) \in \R_+ \times   \mathrm{int}(\mcl Q^j(\R_+))$ and $\phi : (0,\infty) \times \mathrm{int}(\mcl Q^j(\R_+)) \to \R$ be a smooth function. Assume that $\overline{v}_j \big|_{\R_+ \times \mcl Q^j(\R_+)} - \phi$ has a local maximum at $(t,x)$. Up to modifying $\phi$ outside a ball of small radius around $(t,x)$, we may assume that $\phi$ is the restriction to $(0,\infty) \times \mathrm{int}(\mcl Q^j(\R_+))$ of a smooth function $\overline{\phi} : (0,\infty) \times \R^j \to \R$. By construction $\overline{v}_j - \overline{\phi}$ coincide with $\overline{v}_j \big|_{\R_+ \times \mcl Q^j(\R_+)} - \phi$ on a neighborhood of $(t,x)$ and thus has a local maximum at $(t,x)$. Since $\overline{v}_j$ is the viscosity solution of \eqref{e.hj_approx_extended}, we have,
    \begin{equation*}
       \left( \partial_t \overline{\phi} - H_j( \nabla \overline{\phi}) \right)(t,x) \leq 0.
    \end{equation*}
    Since $\phi$ and $\overline{\phi}$ coincide on a neighborhood of $(t,x)$, the previous display remains true if we replace $\overline{\phi}$ by $\phi$. We reach similar conclusions if we assume that $\overline{v}_j \big|_{\R_+ \times \mcl Q^j(\R_+)} - \phi$ has local minimum at $(t,x)$. This proves that $\overline{v}_j \big|_{\R_+ \times \mcl Q^j(\R_+)}$ is a viscosity solution of  \eqref{e.hjapprox}.
    
    In addition, according to Proposition~\ref{p.visco of hj approx extended}, we have for every $(t,x) \in \R_+ \times \mcl Q^j(\R_+)$,
    \begin{e*}
        \overline v_j(t,x) = \frac{1}{j} \sum_{i = 1}^j S_t \chi(x_i) = \int S_t \chi(l_j x).
    \end{e*}
    Therefore, according to Proposition~\ref{p. int S_t chi is nondecreasing}, we have $\overline{v}_j \big|_{\R_+ \times \mcl Q^j(\R_+)} \in \mcl V(\mcl Q^j(\R_+))$ which concludes. 
\end{proof}
\begin{proof}[Proof of Theorem \ref{t.viscosity with linear initial condition}]
    According to Proposition~\ref{p. int S_t chi is nondecreasing}, $(t,\msf q) \mapsto \int S_t \chi(\msf q)$ belongs to $\mcl V(\mcl Q_2(\R_+))$. Let $(t,\msf q) \in \R_+ \times \mcl Q_2(\R_+)$, according to \cite[Theorem~4.6~(1)]{chen2022hamilton}, the value of the unique viscosity solution of \eqref{e.linear hj} at $(t,\msf q)$ is the limit of $v_j(t,p_j \msf q)$ as $j \to +\infty$, where $v_j$ is the unique viscosity solution of \eqref{e.hjapprox}. According to Proposition~\ref{p.viscosity_restriction}, we have 
    \begin{e*}
        v_j(t,p_j \msf q) = \int S_t \chi(l_jp_j \msf q).
    \end{e*}
    Since $x \mapsto S_t \chi(x)$ is Lipschitz, we have 
    \begin{equation*}
        \left|\int S_t \chi(l_jp_j \msf q) - \int S_t \chi(\msf q)\right| \leq c|l_jp_j \msf q - \msf q|_{L^1}.
    \end{equation*}
    Finally, according to \cite[Lemma~3.3~(7)]{chen2022hamilton} we have $\lim_{j \to +\infty} |l_jp_j \msf q - \msf q|_{L^1} = 0$. This concludes the proof.
\end{proof}

\subsection{Proof of the un-inverted Parisi formula} \label{ss.scalar proof}

\begin{proof}[Proof of Theorem \ref{t.main scalar}]
    Let $(t,\mu) \in \R_+ \times \mcl P_\infty(\R_+)$, and
    \begin{e*}
        f(t,\mu) = \lim_{N \to +\infty} F_N(t,\mu).
    \end{e*}
    According to Theorem~\ref{t.limit.free.energy} we have
    \begin{equation*}
        f(t,\mu) = \sup_{\nu \in \mcl P_\infty(\R_+)} \left\{ \psi(\nu) - t \E \xi^* \left( \frac{X_\nu - X_\mu}{t} \right) \right\}.
    \end{equation*}
    We let $\Pi(\mu,\nu)$ denote the set of probability measures $\pi \in \mcl P_\infty(\R_+ \times \R_+)$ such that $\pi_1 = \mu$ and $\pi_2 = \nu$. According to \cite[Proposition~2.5]{mourrat2020free}, we have
    \begin{equation}
        \E \xi^* \left( \frac{X_\nu - X_\mu}{t} \right) = \inf_{ \pi \in \Pi(\nu,\mu) } \int \xi^* \left( \frac{y-x}{t} \right) \d \pi(x,y).
    \end{equation}
    Define
    \begin{e*}
        \Pi(\mu,\cdot) = \bigcup_{\nu \in \mcl P_\infty(\R_+)}  \Pi(\mu,\nu).
    \end{e*}
    It follows respectively from \cite[Proposition~3.6]{chenmourrat2023cavity}, \cite[Corollary~5.2]{chenmourrat2023cavity} and \cite{auffinger2015parisi}, that $\psi$ is nondecreasing, $1$-Lipschitz and concave. Therefore, according to Lemma~\ref{l.fenchel moreau} we have
    \begin{align*}
        f(t,\mu) &= \sup_{\pi \in \Pi(\mu,\cdot)} \left\{ \psi(\pi_2) - t \int \xi^* \left( \frac{y-x}{t} \right) \d\pi(x,y) \right\} \\
                 &= \sup_{\pi \in \Pi(\mu,\cdot)} \inf_{\chi \in \mcl X^0} \left\{ \int \chi \d\pi_2 - \psi_*(\chi) - t \int \xi^* \left( \frac{y-x}{t} \right) \d\pi(x,y) \right\} \\
                 &= \sup_{\pi \in \Pi(\mu,\cdot)} \inf_{\chi \in \mcl X^0} \left\{ \int \chi \d\pi_2 - \psi_*(\chi) - t \int \xi^* \left( \frac{y-x}{t} \right) \d\pi(x,y) \right\}. 
    \end{align*}
The sets $\Pi(\mu,\cdot)$ and $\mcl X^0$ are convex. In addition, according to the Arzelà-Ascoli theorem, $\mcl X^0$ is compact with respect to the topology of local uniform convergence. For every $\pi \in \Pi(\mu,\cdot)$, the map $\chi \mapsto \int \chi \d\pi_2 - \psi_*(\chi)$ is lower semi-continuous on $\mcl X^0$ with respect to the topology of local uniform convergence. Similarly since $\xi^*$ is lower semi-continuous, according to Portmanteau's theorem for every $\chi \in \mcl X^0$, the map $\pi \mapsto \int \chi \d\pi_2 - t \int \xi^* \left( \frac{y-x}{t} \right) \d\pi(x,y)$ is upper semi-continuous with respect to convergence under the optimal transport distance. Thus, according to \cite[Corollary~3.4]{sion1958minimax}, we can interchange sup and inf in the previous display to obtain
    \begin{align*}
                f(t,\mu) &= \inf_{\chi \in \mcl X^0} \left\{ - \psi_*(\chi) + \sup_{\pi \in \Pi(\mu,\cdot)} \left\{ \int \chi \d\pi_2 - t \int \xi^* \left( \frac{y-x}{t} \right) \d\pi(x,y) \right\} \right\} \\
                 &= \inf_{\chi \in \mcl X^0} \left\{ - \psi_*(\chi) + \sup_{\pi \in \Pi(\mu,\cdot)} \left\{ \int \chi \d\pi_2 - t \int \xi^* \left( \frac{y-x}{t} \right) \d\pi(x,y) \right\} \right\} \\
                 &= \inf_{\chi \in \mcl X^0} \left\{ - \psi_*(\chi) + \sup_{\nu \in \mcl P_\infty(\R_+)} \left\{ \int \chi \d\nu - t \E \xi^* \left( \frac{X_\nu - X_\mu}{t} \right) \right\} \right\} \\
    \end{align*} 
    According to \cite[Theorem~4.6~(2) and Proposition~A.3]{chen2022hamilton}, the viscosity solution of \eqref{e.linear hj} has the Hopf-Lax representation and it follows from Theorem~\ref{t.viscosity with linear initial condition} that 
    \begin{e*}
        \sup_{\nu \in \mcl P_\infty(\R_+)} \left\{ \int \chi \d\nu - t \E \xi^* \left( \frac{X_\nu - X_\mu}{t} \right) \right\} = \int S_t \chi \d\mu.
    \end{e*}
    Thus, from the two previous displays or simply from \eqref{e.alternative} if $\mu = \delta_0$, we have  
    \begin{align*}
        f(t,\mu) &= \inf_{\chi \in \mcl X^0} \left\{ - \psi_*(\chi) + \int S_t \chi \d\mu \right\}.                
    \end{align*}
    Since $f(t,\cdot)$ is Lipschitz continuous on $\mcl P_1(\R_+)$ and $\mcl P_\infty(\R_+)$ is dense in $\mcl P_1(\R_+)$, by density we can extend the equality of the last display to any $\mu \in \mcl P_1(\R_+)$.
\end{proof}

\section{Extension of concave functions} \label{s.extension}

As discussed above, the main difficulty to generalize the results obtained in Section \ref{s. scalar models} to the case $D > 1$ is the fact that the set $\mcl P^\upa(S^D_+)$ is nonconvex when $D > 1$. Indeed, this prevents us from performing the sup-inf interchange using \cite{sion1958minimax} as in the proof of Theorem~\ref{t.main scalar}. To circumvent this difficulty, we show that the initial condition $\psi : \mcl P^\upa(S^D_+) \to \R$ is the restriction of a concave and Lipschitz function defined on $\mcl P(S^D_+)$. Using this extension, we show that at $\mu = \delta_0$ the supremum over $\mcl P^\upa(S^D_+)$ in \eqref{e.lim.FN} can be rewritten as a supremum over $\mcl P(S^D_+)$. Since the set $\mcl P(S^D_+)$ is convex, we can then perform the sup-inf interchange and proceed as in the proof of Theorem~\ref{t.main scalar}. To do this properly, we will need to use some compacity properties, so in what follows $S^D_+$ is going to be replaced by a compact subset $\mcl K \subset S^D_+$.

\subsection{Optimal transport of monotone probability measures} \label{ss.general_concave_extension}

Let $\mcl K \subset S^D_+$ be a compact set and $\mcl P(\mcl K)$ denote the set of Borel probability measures on $\mcl K$. In this subsection, we study functions on possibly nonconvex subsets $K$ of $\mcl P(\mcl K)$. We prove that if such a function is concave on every convex subset of $K$, then it satisfies a Jensen-type inequality, provided that $K$ is an extreme set in its convex hull (see Definition~\ref{d.extreme}).

As previously, $\mcl L$ denotes the set of Lipschitz functions $\chi : S^D_+ \to \R$, $\mcl L_{\leq 1}$ denote the set of $1$-Lipschitz functions $\chi : S^D_+ \to \R$, $\mcl L^0$ denotes the set of functions $\chi \in \mcl L$ satisfying $\chi(0)= 0$ and $\mcl L_{\leq 1}^0 = \mcl L^0 \cap \mcl L_{\leq 1}$. We equip $\mcl P(\mcl K)$ with the optimal transport distance, for every $\mu,\nu \in \mcl P(\mcl K)$,
\begin{e*}
    d(\mu,\nu) = \inf_{\pi \in \Pi(\mu,\nu)} \left\{ \int_{\mcl K \times \mcl K} |x-y| \d \pi(x,y) \right\},
\end{e*}
where $\Pi(\mu,\nu)$ denotes the set of probability measures $\pi \in \mcl P(\mcl K \times \mcl K)$ such that $\pi_1 = \mu$ and $\pi_2 = \nu$. The distance $d$ also admits the following dual representation \cite[Theorem~5.10]{villani2},
\begin{equation} 
    d(\mu,\nu) = \sup_{\chi \in \mcl L_{\leq 1}} \left\{ \int_{S^D_+} \chi \d(\mu-\nu) \right\}.
\end{equation}
In what follows, we will consider a compact set $K \subset \mcl P(\mcl K)$. The set $\mcl P(K)$ of probability measures on $K$ will appear, we will equip this set with the optimal transport distance $\msf d$ derived from the optimal transport distance $d$ on $K$, that is for every $\eta,\eta' \in \mcl P(K)$,
\begin{e*}
    \msf d(\eta,\eta') = \inf_{\pi \in \Pi(\eta',\eta')} \left\{ \int_{ K \times K} d(\nu,\nu') \d \pi(\nu,\nu') \right\},
\end{e*}
where $\Pi(\eta,\eta')$ denotes the set of probability measures $\pi \in \mcl P(K \times K)$ such that $\pi_1 = \eta$ and $\pi_2 = \eta'$. As usual, the optimal transport distance $\msf d$ admits the dual representation \cite[Theorem~5.10]{villani2},
\begin{equation} 
    \msf d(\eta,\eta') = \sup_{X} \left\{ \int X(\nu) \d\eta(\nu) -\int X(\nu) \d\eta'(\nu) \right\},
\end{equation}
where the supremum is taken over the set of 1-Lipschitz functions $X : K \to \R$. 
\begin{proposition} \label{p.approx_by_dirac} 
    Let $E$ be a compact Polish space, we equip the set $\mcl P(E)$ of Borel probability measures on $E$ with the optimal transport distance. Let $\mu \in \mcl P(E)$, let $(X_i)_{i \geq 1} : \Omega \to E^\N$ be independent and identically distributed random variables with law $\mu$, then almost surely
    \begin{equation*}
        \frac{1}{n} \sum_{i=1}^n \delta_{X_i} \underset{n \to \infty}{\longrightarrow} \mu.
    \end{equation*}
    %
\end{proposition}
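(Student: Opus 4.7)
The strategy is to reduce convergence under the optimal transport distance to weak convergence of probability measures, and then to apply the one-dimensional strong law of large numbers along a countable dense family of continuous test functions. Set $\mu_n = \frac{1}{n}\sum_{i=1}^n \delta_{X_i}$. Since $E$ is compact, the optimal transport distance on $\mcl P(E)$ metrizes weak convergence: by Kantorovich--Rubinstein duality, this distance between $\mu_n$ and $\mu$ equals the supremum of $\int \chi \, \d(\mu_n - \mu)$ over $1$-Lipschitz test functions $\chi : E \to \R$, and compactness of $E$ together with the Arzel\`a--Ascoli theorem ensures that weak convergence makes this supremum vanish. Hence it suffices to show that almost surely $\mu_n$ converges weakly to $\mu$.

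Because $E$ is a compact metric space, the Banach space $C(E)$ of real-valued continuous functions on $E$ equipped with the sup norm is separable. Fix a countable dense subset $(f_k)_{k \geq 1}$ of $C(E)$. For each $k$, the variables $(f_k(X_i))_{i \geq 1}$ are i.i.d.\ and bounded, so the strong law of large numbers yields
\begin{e*}
\frac{1}{n}\sum_{i=1}^n f_k(X_i) \xrightarrow[n \to +\infty]{} \int_E f_k \, \d\mu \quad \text{almost surely.}
\end{e*}
Intersecting the corresponding full-probability events over $k \in \N$ produces a single event $\Omega_0$ of full probability on which the convergence holds simultaneously for every $k$. On $\Omega_0$, a standard $3\eps$-argument using density of $(f_k)_k$ in $C(E)$ shows that $\int f\, \d\mu_n \to \int f\, \d\mu$ for every $f \in C(E)$: given $\eps > 0$, pick $k$ with $\sup_{x \in E} |f(x) - f_k(x)| \le \eps$ and bound the difference by $2\eps$ plus the analogous quantity for $f_k$, which tends to $0$ by construction of $\Omega_0$. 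This is exactly weak convergence of $\mu_n$ to $\mu$.

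The main obstacle, if any, is purely book-keeping: confirming that in this compact setting the optimal transport distance actually metrizes weak convergence, for which one may invoke \cite[Theorem~6.9]{villani2}. Once this reduction is in place, the argument becomes a classical Varadarajan-type proof relying only on the separability of $C(E)$ and the scalar strong law of large numbers, and no further compactness or tightness considerations are needed since $\mcl P(E)$ is itself compact when $E$ is.
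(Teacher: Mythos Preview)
Your proof is correct and follows essentially the same approach as the paper: reduce convergence in the optimal transport distance to weak convergence via \cite[Theorem~6.9]{villani2}, invoke separability of $C(E)$ to obtain a countable dense family of test functions, apply the scalar strong law of large numbers to each, and intersect the resulting full-probability events. The only difference is cosmetic—you spell out the $3\eps$ density argument that the paper leaves implicit.
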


\begin{remark}
    By Caratheodory's extension theorem, given a Borel probability measure $\mu$ on a Polish space $E$, we can always construct a sequence of independent and identically distributed random variables with law $\mu$.     
\end{remark}

\begin{proof}
    Let $d$ denote the optimal transport distance on $\mcl P(E)$. According to \cite[Theorem~6.9]{villani2}, since the space $E$ is compact, given a sequence $(\mu_n)_{n \geq 1}$ in $\mcl P(E)$, we have $\mu_n \to \mu$ in $(\mcl P(E),d)$ if and only if for every continuous function $h : E \to \R$,
    \begin{e*}
        \lim_{n \to +\infty } \int h \d\mu_n = \int h \d \mu.
    \end{e*}
    Let $\mcl C(E)$ denote the set of real-valued continuous functions on $E$. Since $E$ is compact and metrizable, $(\mcl C(E),|\cdot|_\infty)$ is separable \cite[Theorem~6.6]{conway1990fa}. Let $(h_p)_p$ be a dense sequence in $(\mcl C(E),|\cdot|_\infty)$, to check convergence in $(\mcl P(E),d)$, it is sufficient to check convergence against $h_p$ for every $p \geq 1$. By the law of large numbers, for every $p \geq 1$ the following holds almost surely
    \begin{equation*}
        \lim_{n \to \infty} \frac{1}{n} \sum_{i = 1}^n h_p(X_i) = \int_E h_p \d\mu.
    \end{equation*}
    Let $\tilde \mu_n = \frac{1}{n} \sum_{i = 1}^n \delta_{X_i} \in \mcl P(E)$, since $(h_p)_p$ is countable, almost surely we have for every $p \geq 1$,
    \begin{equation*}
        \lim_{n \to \infty} \int_E h_p \d\tilde \mu_n = \int_E h_p \d\mu.
    \end{equation*}
    That is, almost surely the sequence $(\tilde \mu_n)_n$ converges to $\mu$ in $(\mcl P(E),d)$. 
\end{proof}

Given $\eta \in \mcl P(K)$, we define the barycenter $\text{Bar}(\eta) \in \mcl P(\mcl K)$ by
\begin{e*}
    \text{Bar}(\eta)(A) = \int_{K} \nu(A) \d \eta(\nu).
\end{e*}
The barycenter of $\eta$ is the unique probability measure on $\mcl K$ satisfying for every continuous function $h : \mcl K \to \R$,
\begin{e*}
    \int_K \int_{\mcl K} h(x) \d\nu(x) \d\eta(\nu) = \int_{\mcl K} h(x) \d \text{Bar}(\eta)(x).
\end{e*}
Heuristically, we can think of $\text{Bar}(\eta)$ as the first moment of $\eta$.
\begin{remark} \label{r.finite_bar}
    When $\eta$ is a finitely supported probability measure, say of the form $\sum_{i = 1}^p c_i\delta_{\nu_i}$, we have $\text{Bar}(\eta) = \sum_{i = 1}^p c_i \nu_i$. 
\end{remark}
\begin{proposition} \label{p.def_bar}
    The barycenter map $\text{Bar} : (\mcl P(K), \msf d) \to (\mcl P(\mcl K),d)$ is Lipschitz continuous and for every $\eta,\eta' \in \mcl P(K)$ and $\lambda \in [0,1]$, we have 
    \begin{e*}
        \text{Bar}(\lambda \eta +(1-\lambda) \eta') = \lambda  \text{Bar}(\eta) + (1-\lambda)  \text{Bar}(\eta').
    \end{e*}
\end{proposition}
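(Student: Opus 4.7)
The plan is to verify both claims directly from the definition of the barycenter, relying on the dual (Kantorovich-Rubinstein) representations of $d$ and $\msf d$ stated just above the proposition. The linearity assertion is entirely formal, so the real content is the Lipschitz estimate, and in fact I expect to obtain the sharp constant $1$.

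First I would prove linearity. For any Borel set $A \subset \mcl K$, the integral $\int_K \nu(A)\,\d\eta(\nu)$ is manifestly linear in $\eta$, hence so is $\eta \mapsto \text{Bar}(\eta)$ as a map into signed measures on $\mcl K$. Since $\lambda \text{Bar}(\eta)+(1-\lambda)\text{Bar}(\eta')$ is clearly a probability measure when $\lambda\in[0,1]$, this settles the identity $\text{Bar}(\lambda \eta +(1-\lambda)\eta') = \lambda\text{Bar}(\eta)+(1-\lambda)\text{Bar}(\eta')$.

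Next I would prove the Lipschitz estimate. Fix $\chi\in\mcl L_{\leq 1}$ and define $X_\chi : K \to \R$ by $X_\chi(\nu) = \int_{\mcl K} \chi\,\d\nu$. The defining property of the barycenter gives
\begin{equation*}
    \int_{\mcl K} \chi\,\d\text{Bar}(\eta) - \int_{\mcl K} \chi\,\d\text{Bar}(\eta') = \int_K X_\chi(\nu)\,\d\eta(\nu) - \int_K X_\chi(\nu)\,\d\eta'(\nu).
\end{equation*}
The key observation is that $X_\chi$ is $1$-Lipschitz on $(K,d)$: for every $\nu,\nu'\in K$, the dual representation of $d$ gives $|X_\chi(\nu)-X_\chi(\nu')| = \bigl|\int\chi\,\d(\nu-\nu')\bigr| \leq d(\nu,\nu')$ since $\chi\in\mcl L_{\leq 1}$. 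Plugging $X_\chi$ into the dual representation of $\msf d$ therefore yields $\int\chi\,\d\text{Bar}(\eta) - \int\chi\,\d\text{Bar}(\eta') \leq \msf d(\eta,\eta')$. Taking the supremum over $\chi\in\mcl L_{\leq 1}$ and invoking once more the dual formula for $d$ concludes
\begin{equation*}
    d\bigl(\text{Bar}(\eta),\text{Bar}(\eta')\bigr) \leq \msf d(\eta,\eta').
\end{equation*}

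I do not anticipate a real obstacle here; the only subtlety is to handle test functions cleanly, and the dual representations provided in the preceding paragraphs of the paper make this routine. If desired, one can bypass duality altogether by using an optimal coupling $\pi\in\Pi(\eta,\eta')$ between $\eta$ and $\eta'$ and, for each pair $(\nu,\nu')$ in its support, an optimal coupling between $\nu$ and $\nu'$; gluing these together produces a transport plan between $\text{Bar}(\eta)$ and $\text{Bar}(\eta')$ of cost $\int d(\nu,\nu')\,\d\pi(\nu,\nu') = \msf d(\eta,\eta')$, giving the same bound.
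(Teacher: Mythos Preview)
Your proposal is correct and follows essentially the same route as the paper: both arguments exploit the dual representations of $d$ and $\msf d$, observing that for each $\chi\in\mcl L_{\leq 1}$ the map $\nu\mapsto\int\chi\,\d\nu$ is $1$-Lipschitz on $K$ and hence an admissible test function for $\msf d$. The paper omits the linearity verification as obvious and does not mention the alternative coupling argument, but otherwise the proofs coincide.
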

\begin{proof}
    Let $\chi \in \mcl L_{\leq 1}$, the map $\nu \mapsto \int \chi \d\nu$ is $1$-Lipschitz on $K$. Let $\eta,\eta' \in \mcl P(K)$, we have
    \begin{align*}
        d(\text{Bar}(\eta),\text{Bar}(\eta')) &= \sup_{\chi \in \mcl L_{\leq 1}} \left\{ \int_{\mcl K} \chi \d \text{Bar}(\eta) - \int_{\mcl K} \chi \d \text{Bar}(\eta') \right\} \\
                                              &= \sup_{\chi \in \mcl L_{\leq 1}} \left\{ \int_K \int_{\mcl K} \chi \d \nu \d\eta(\nu) - \int_K \int_{\mcl K} \chi \d \nu \d \eta'(\nu) \right\} \\ 
                                              &\leq \sup_X \left\{ \int_K X d\eta - \int_K X \d \eta' \right\} \\
                                              &= \msf d(\eta,\eta').
    \end{align*}
\end{proof}
Let $C$ denote the closed convex hull of $K$ in $\mcl P( \mcl K)$.
\begin{definition} \label{d.extreme}
    We say that $K$ is an extreme set in $C$ when for every $\eta \in \mcl P(C)$, if $\text{Bar}(\eta) \in K$ then $\eta \in \mcl P(K)$.
\end{definition}
For example, if $K$ is an extreme set in $C$, then taking $\eta = \lambda \delta_{\mu} + (1-\lambda) \delta_{\nu}$, where $\mu,\nu \in C$ and $\lambda \in (0,1)$, we have that 
\begin{e*}
    \lambda {\mu} + (1-\lambda) {\nu} \in  K \implies \mu,\nu \in K.
\end{e*}
\begin{proposition} \label{p.convex_hull=bar}
    The closed convex hull $C$ of $K$ satisfies 
    \begin{equation} \label{e.convex_hul=bar}
         C = \left\{ \text{Bar}(\eta) \big| \, \eta \in \mcl P(K) \right\}.
    \end{equation}
\end{proposition}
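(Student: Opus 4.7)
The plan is to prove the two inclusions separately. The inclusion $\{\text{Bar}(\eta) \mid \eta \in \mcl P(K)\} \subset C$ will follow from approximation by empirical measures, and the reverse inclusion will follow from showing that the right-hand side is a closed convex set that contains $K$.

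For the first inclusion, I would fix $\eta \in \mcl P(K)$ and apply Proposition~\ref{p.approx_by_dirac} to the Polish space $E = K$ (which is compact by hypothesis), to produce a sequence of independent random variables $(\nu_i)_{i \geq 1}$ with law $\eta$ such that, almost surely,
\begin{equation*}
    \eta_n := \frac{1}{n} \sum_{i = 1}^n \delta_{\nu_i} \longrightarrow \eta
\end{equation*}
in $(\mcl P(K), \msf d)$ as $n \to +\infty$. By Remark~\ref{r.finite_bar}, one has $\text{Bar}(\eta_n) = \frac{1}{n} \sum_{i=1}^n \nu_i$, which is a convex combination of elements of $K$ and therefore belongs to $C$. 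Since the barycenter map is continuous by Proposition~\ref{p.def_bar}, passing to the limit gives $\text{Bar}(\eta) = \lim_n \text{Bar}(\eta_n) \in C$, because $C$ is closed.

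For the reverse inclusion, I would set $B = \{\text{Bar}(\eta) \mid \eta \in \mcl P(K)\}$ and verify that $B$ is a closed convex subset of $\mcl P(\mcl K)$ containing $K$. Convexity follows from the identity $\text{Bar}(\lambda \eta + (1-\lambda)\eta') = \lambda \text{Bar}(\eta) + (1-\lambda)\text{Bar}(\eta')$ in Proposition~\ref{p.def_bar}. The inclusion $K \subset B$ is immediate by choosing $\eta = \delta_\nu$ for $\nu \in K$, since $\text{Bar}(\delta_\nu) = \nu$ by Remark~\ref{r.finite_bar}. For closedness, I would use that since $\mcl K$ is compact, so is $(\mcl P(\mcl K), d)$ by Prokhorov's theorem, hence $K$ is compact and in turn $(\mcl P(K), \msf d)$ is compact. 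Since $\text{Bar}$ is continuous, $B$ is the continuous image of a compact set, hence compact and in particular closed. Since $C$ is by definition the smallest closed convex set containing $K$, we conclude $C \subset B$.

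I do not expect a serious obstacle in this proof: the key technical input is the continuity and linearity of $\text{Bar}$ (already available from Proposition~\ref{p.def_bar}) together with compactness of $\mcl P(K)$. The only subtlety worth double-checking is that weak convergence and $\msf d$-convergence of measures on the compact set $K$ agree, which is standard and is effectively what Proposition~\ref{p.approx_by_dirac} uses for $\mcl P(K)$.
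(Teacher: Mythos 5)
Your proof is correct and takes essentially the same approach as the paper: both arguments rest on Proposition~\ref{p.approx_by_dirac} to approximate a general $\eta \in \mcl P(K)$ by empirical measures, Remark~\ref{r.finite_bar} to identify barycenters of finitely supported $\eta$ as convex combinations of elements of $K$, and Proposition~\ref{p.def_bar} for continuity and affine-linearity of $\mathrm{Bar}$. One small but useful point: the paper states that the set $\{\mathrm{Bar}(\eta) \mid \eta \in \mcl P(K)\}$ is closed, citing Proposition~\ref{p.def_bar}, but that proposition only asserts that $\mathrm{Bar}$ is Lipschitz, which does not by itself yield a closed image; your compactness argument (that $(\mcl P(K), \msf d)$ is compact since $K$ is compact, hence its continuous image under $\mathrm{Bar}$ is compact and therefore closed) supplies the missing justification and is the right way to fill that step. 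A very minor quibble: you derive compactness of $K$ from compactness of $\mcl P(\mcl K)$, but a compact ambient space only forces closed subsets to be compact; this is harmless here only because the paper's standing hypothesis already assumes $K$ compact, so you may as well invoke that directly.
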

\begin{proof}
    According to Proposition~\ref{p.def_bar}, the set defined in \eqref{e.convex_hul=bar} is closed and convex. Let $C' \subset \mcl P(\mcl K)$ be a closed convex set containing $K$, let $\eta \in \mcl P(K)$. According to Proposition~\ref{p.approx_by_dirac}, there exists a sequence of finitely supported measures $(\eta_n)_n$ such that $\eta_n \to \eta$ as $n \to +\infty$. Since $C'$ is convex, according to Remark~\ref{r.finite_bar}, we have $\text{Bar}(\eta_n) \in C'$. In addition, since $C'$ is closed, letting $n \to \infty$ and using Proposition~\ref{p.def_bar} we have $\text{Bar}(\eta) \in C'$. We have proven that $\left\{ \text{Bar}(\eta)  \big| \, \eta \in \mcl P(K) \right\} \subset C'$, therefore $\left\{ \text{Bar}(\eta)  \big| \, \eta \in \mcl P(K) \right\}$ is the closed convex hull of $K$.
\end{proof}
\begin{proposition} \label{p.approximation_barycenter}
    Assume that $K$ is an extreme set in its convex hull. Let $\eta \in \mcl P(K)$ be such that $\text{Bar}(\eta) \in K$. There exists a sequence $\eta_n \in \mcl P(K)$ of finitely supported probability measures such that $\text{Bar}(\eta_n) \in K$ and $\eta_n \to \eta$ as $n \to +\infty$.
\end{proposition}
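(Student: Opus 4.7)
The plan is to discretize $\eta$ by partitioning $K$ into small Borel pieces, replace each piece by a Dirac mass at its conditional barycenter, and let the extremality hypothesis promote those barycenters back into $K$. A priori the conditional barycenters belong only to the convex hull $C$; the role of extremality is to force them to land in $K$, which is the key conceptual point.

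Since $K$ is compact in $(\mcl P(\mcl K), d)$ it is totally bounded, so for any sequence $\epsilon_n \downarrow 0$ I would fix finite Borel partitions $K = \bigsqcup_{i=1}^{m_n} K_{n,i}$ with $\diam_d(K_{n,i}) \leq \epsilon_n$. Set $c_{n,i} := \eta(K_{n,i})$, keep only indices with $c_{n,i} > 0$, and introduce the conditional measures $\eta_{n,i}^* := c_{n,i}^{-1} \eta(\cdot \cap K_{n,i}) \in \mcl P(K)$, so that $\eta = \sum_i c_{n,i} \eta_{n,i}^*$. Let $\mu_{n,i}^* := \text{Bar}(\eta_{n,i}^*) \in C$ (Proposition~\ref{p.convex_hull=bar}). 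The linearity of $\text{Bar}$ from Proposition~\ref{p.def_bar}, iterated to handle finite convex combinations, yields $\sum_i c_{n,i} \mu_{n,i}^* = \text{Bar}(\eta) = \mu$.

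Define the candidate $\eta_n := \sum_i c_{n,i} \delta_{\mu_{n,i}^*}$, a finitely supported element of $\mcl P(C)$. By construction $\text{Bar}(\eta_n) = \sum_i c_{n,i} \mu_{n,i}^* = \mu \in K$. Invoking the extremality of $K$ in $C$ (Definition~\ref{d.extreme}) forces $\eta_n \in \mcl P(K)$, i.e.\ each $\mu_{n,i}^*$ actually belongs to $K$. Thus $\eta_n$ is a finitely supported probability measure on $K$ whose barycenter sits in $K$, exactly what we wanted.

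It remains to verify $\msf d(\eta_n, \eta) \to 0$. For any $\nu \in K_{n,i}$, mixing the optimal couplings between each $\nu' \in K_{n,i}$ and $\nu$ with weights $\eta_{n,i}^*$ produces a coupling of $\mu_{n,i}^* = \int \nu' \d\eta_{n,i}^*(\nu')$ and $\nu$ of cost $\int d(\nu',\nu) \d\eta_{n,i}^*(\nu') \leq \diam_d(K_{n,i}) \leq \epsilon_n$, so $d(\mu_{n,i}^*, \nu) \leq \epsilon_n$. Hence $\msf d(\delta_{\mu_{n,i}^*}, \eta_{n,i}^*) \leq \epsilon_n$, and gluing these piecewise couplings gives $\msf d(\eta_n, \eta) \leq \sum_i c_{n,i} \epsilon_n = \epsilon_n \to 0$. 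The only conceptually non-trivial step is the use of extremality in the preceding paragraph; everything else is a routine application of Kantorovich duality.
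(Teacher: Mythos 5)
Your proof is correct, and it takes a genuinely different route from the paper's. The paper argues probabilistically: it samples $\nu_1,\nu_2,\ldots$ i.i.d.\ from $\eta$, forms the empirical measures $\tilde\eta_n = \tfrac1n\sum_{i\le n}\delta_{\nu_i}$, invokes Proposition~\ref{p.approx_by_dirac} (law of large numbers) for the convergence $\tilde\eta_n\to\eta$, and then uses extremality on the \emph{law} $P_n$ of the random barycenter $\text{Bar}(\tilde\eta_n)$ to conclude that $\text{Bar}(\tilde\eta_n)\in K$ almost surely. There, membership of $\tilde\eta_n$ in $\mcl P(K)$ is automatic (each $\nu_i\in K$), and the nontrivial point is the barycenter constraint. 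Your argument is deterministic: you partition $K$ into $d$-small Borel pieces, replace each piece by a Dirac at its conditional barycenter $\mu_{n,i}^*$, and exploit the exact identity $\text{Bar}(\eta_n)=\text{Bar}(\eta)$. Here it is the other way around: the barycenter constraint is automatic, and extremality is what forces the $\mu_{n,i}^*$ to lie in $K$ (i.e.\ $\eta_n\in\mcl P(K)$). Your route avoids the law of large numbers and produces a fully constructive approximating sequence with an explicit $\epsilon_n$ rate, at the small cost of having to verify the $\msf d$-convergence by hand via Kantorovich duality and a gluing of couplings (your estimate is fine; the averaging-of-optimal-couplings step can also be replaced by the dual bound $\int\chi\,\d\mu_{n,i}^*-\int\chi\,\d\nu=\int\bigl(\int\chi\,\d\nu'-\int\chi\,\d\nu\bigr)\d\eta_{n,i}^*(\nu')\le\epsilon_n$ for $\chi\in\mcl L_{\le1}$, which sidesteps any measurable-selection concern). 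Both approaches use extremality exactly once, at the single non-routine step.
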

\begin{proof}
    Let $(\nu_i)_{i \geq 1}$ be a sequence of independent and identically distributed random measures in $K$ with law $\eta$, define 
    \begin{e*}
        \tilde \eta_n = \frac{1}{n} \sum_{i=1}^n \delta_{\nu_i}.
    \end{e*}
    According to Proposition~\ref{p.approx_by_dirac}, we have $\tilde \eta_n \to \eta$ almost surely. Let us show that almost surely for every $n \geq 1$, $\text{Bar}(\tilde \eta_n) \in K$. Define, 
    \begin{equation*}
        \mcl O = \{ \omega \in \Omega \big| \, \forall n \geq 1, \; \text{Bar}(\tilde \eta_n(\omega)) \in K \}.
    \end{equation*}
    Let $P_n \in \mcl P(C)$ denote the law of the random probability measure $\text{Bar}(\tilde \eta_n) \in C$. The probability measure $P_n$ is the only probability measure in $\mcl P(C)$ that satisfies for every $h : C \to \R$,
    \begin{equation*}
        \int_C h(\nu) \d P_n(\nu) = \int_{K^n} h \left( \frac{1}{n} \sum_{i=1}^n \nu_i\right) \prod_{i = 1}^n \d\eta(\nu_i).
    \end{equation*}
    For every $A \subset C$, we have,
    \begin{align*}
        \text{Bar}(P_n)(A) &= \int_C \nu(A) \d P_n(\nu) \\
                            &= \int_{K^n} \frac{1}{n} \sum_{i = 1}^n \nu_i(A) \prod_{i=1}^n \d\eta(\nu_i) \\
                            &= \int_K \nu(A) \d\eta(\nu) \\
                            &= \text{Bar}(\eta)(A).
    \end{align*}
    So $\text{Bar}(P_n) = \text{Bar}(\eta) \in K$, since $K$ is an extreme set in its closed convex hull, we have $P_n \in \mcl P(K)$ and almost surely $\text{Bar}(\tilde \eta_n) \in K$. Thus $\P(\mcl O) = 1$ and for almost all $\omega \in \Omega$, the measures $\eta_n = \tilde \eta_n (\omega)$ satisfy the desired properties.
\end{proof} 

\begin{proposition} \label{p.fake_jensen}
Assume that $K$ is an extreme set in its closed convex hull, let $\varphi : K \to \R$ be a bounded upper semicontinuous function. Assume that for every $\nu,\nu' \in K$, $\lambda \in [0,1]$ such that $\lambda \nu +(1-\lambda) \nu' \in K$ we have
\begin{equation*}
    \lambda \varphi(\nu) + (1-\lambda) \varphi(\nu') \leq \varphi \left( \lambda \nu +(1-\lambda) \nu' \right).
\end{equation*}
Then $\varphi$ satisfies Jensen's inequality, that is for every $\eta \in \mcl P(K)$ such that $\text{Bar}(\eta) \in K$, we have
\begin{equation} \label{e.fake_jensen}
    \int_K \varphi(\nu) \d \eta(\nu) \leq \varphi \left( \text{Bar}(\eta) \right).
\end{equation}
\end{proposition}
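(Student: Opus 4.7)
The plan is to proceed in two steps: first establish the inequality when $\eta$ is finitely supported, then extend by an approximation argument. For the finitely supported case, I argue by induction on the cardinality $p$ of the support of $\eta$. The case $p \leq 2$ is exactly the hypothesis of the statement. For $p \geq 3$, write $\eta = \sum_{i=1}^p c_i \delta_{\nu_i}$ with $c_i > 0$, $\sum_i c_i = 1$, $\nu_i \in K$, and $\mu := \sum_i c_i \nu_i \in K$. Assuming $c_1 < 1$ (otherwise there is nothing to do), set
\begin{e*}
\mu' = \frac{1}{1-c_1} \sum_{i=2}^p c_i \nu_i,
\end{e*}
which lies in $C$ as a convex combination of elements of $K$. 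Then $\mu = c_1 \nu_1 + (1-c_1) \mu'$ is the barycenter of the measure $c_1 \delta_{\nu_1} + (1-c_1) \delta_{\mu'} \in \mcl P(C)$, and since $K$ is extreme in $C$ this forces $\mu' \in K$. Applying the two-atom hypothesis to $\nu_1$ and $\mu'$ and the induction hypothesis to $\sum_{i=2}^p \frac{c_i}{1-c_1} \delta_{\nu_i}$ then chains into the desired inequality at $\eta$.

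To extend to general $\eta$, I would rely on the specific construction appearing in the proof of Proposition~\ref{p.approximation_barycenter}. Namely, take a sequence $(\nu_i)_{i \geq 1}$ of independent $K$-valued random variables with common law $\eta$, and set $\eta_n = \frac{1}{n} \sum_{i=1}^n \delta_{\nu_i}$. That proof shows that almost surely $\text{Bar}(\eta_n) \in K$ for every $n \geq 1$ and, by Proposition~\ref{p.approx_by_dirac}, that $\eta_n \to \eta$ in $(\mcl P(K), \msf d)$. Since $\varphi$ is bounded, the strong law of large numbers gives, almost surely,
\begin{e*}
\int_K \varphi \, \d \eta_n = \frac{1}{n} \sum_{i=1}^n \varphi(\nu_i) \xrightarrow[n \to +\infty]{} \int_K \varphi \, \d \eta,
\end{e*}
while Proposition~\ref{p.def_bar} ensures $\text{Bar}(\eta_n) \to \text{Bar}(\eta)$ in $(\mcl P(\mcl K), d)$. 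Fixing an $\omega$ for which all three properties hold simultaneously, the finitely supported case gives $\int \varphi \, \d\eta_n \leq \varphi(\text{Bar}(\eta_n))$; passing to the $\limsup$ and invoking the upper semicontinuity of $\varphi$ at $\text{Bar}(\eta)$ yields \eqref{e.fake_jensen}.

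The main obstacle is the induction step in the finitely supported case: when $K$ is not convex, the partial barycenter $\mu'$ would generally fail to lie in $K$, which would block any direct induction from the two-atom hypothesis. The extreme set assumption is precisely tailored to remove this difficulty, forcing $\mu' \in K$ as soon as $\mu$ lies in $K$. The approximation step is then essentially routine, provided one uses the specific empirical construction from Proposition~\ref{p.approximation_barycenter} so that the law of large numbers handles the integral of $\varphi$ against $\eta_n$ despite the fact that $\varphi$ is only upper semicontinuous.
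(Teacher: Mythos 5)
Your proof follows the paper's argument closely: an induction on the number of atoms for finitely supported $\eta$, using extremality of $K$ to force partial barycenters into $K$, followed by approximation via the empirical-measure sequence from Proposition~\ref{p.approximation_barycenter} and upper semicontinuity of $\varphi$ at the limit barycenter. Your explicit invocation of the strong law of large numbers to obtain $\int\varphi\,\d\eta_n \to \int\varphi\,\d\eta$ is a worthwhile precision: the paper's terse ``pass to the limit'' phrasing conceals that $\varphi$ is only upper semicontinuous (so a generic portmanteau argument gives $\limsup_n\int\varphi\,\d\eta_n \le \int\varphi\,\d\eta$, the wrong direction), and one genuinely needs that the $\eta_n$ are empirical measures to conclude.
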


\begin{proof}
    Assume that $\eta$ is finitely supported, then $\eta = \sum_{i = 1}^p c_i \delta_{\nu_i}$ for some $\nu_i \in K$ and $c_i > 0$ such that $\sum_{i = 1}^p c_i = 1$ and $\sum_{i =1}^p c_i \nu_i \in K$. Since $K$ is an extreme set in its convex hull $C$, for every nonempty $I \subset \{1,\dots,p\}$, we have 
    \begin{e}
        \frac{\sum_{i \in I} c_i \nu_i}{\sum_{i \in I} c_i} \in K.
    \end{e}
    It follows by induction on $p \geq 1$, that 
    \begin{equation*}
        \sum_{i = 1}^p c_i \varphi(\nu_i) \leq \varphi \left(\sum_{i=1}^p c_i \nu_i\right).
    \end{equation*}
    Therefore, \eqref{e.fake_jensen} holds when $\eta$ is finitely supported. Assume now that $\eta \in \mcl P(K)$, is any probability measure such that $\text{Bar}(\eta) \in K$. According to Proposition~\ref{p.approximation_barycenter}, the probability measure $\eta$ can be approximated, by finitely supported probability measures $\eta_n \in \mcl P(K)$ with barycenters in $K$. Using Proposition~\ref{p.def_bar}, we can pass to the limit in
    \begin{equation*} 
    \int_K \varphi(\nu) d \eta_n(\nu) \leq \varphi \left( \text{Bar}(\eta_n) \right),
    \end{equation*}
    and use the upper semicontinuity of $\varphi$ to obtain \eqref{e.fake_jensen}.
 \end{proof}
\subsection{Monotone probability measures on a compact set}

Let $\mcl K$ be a compact subset of $S^D_+$, define 
\begin{e}
    \mcl P^\upa(\mcl K) = \mcl P^\upa(S^D_+) \cap \mcl P(\mcl K).
\end{e}
 In this section, we show that $K = \mcl P^\upa(\mcl K)$ is a compact set and that $K$ is extreme in its convex hull. This means that Proposition~\ref{p.fake_jensen} can be applied to functions on $\mcl P^\upa(\mcl K)$.
 
 Recall that we have defined a partial order on $S^D$ by setting $x \leq x'$ whenever $x' - x \in S^D_+$. We say that a subset $S \subset S^D_+$ is totally ordered when for every $x,x' \in S$ we have $x \leq x'$ or $x' \leq x$.
\begin{proposition} \label{p.totally_ordered_supp}
    The set $\mcl P^\upa(\mcl K)$ is a closed subset of $(\mcl P(\mcl K),d)$, in particular $\mcl P^\upa(\mcl K)$ is compact with respect to the topology induced by $d$. In addition, for every $\mu \in \mcl P(\mcl K)$, the support of $\mu$ is totally ordered if and only if $\mu \in \mcl P^\upa(\mcl K)$. 
\end{proposition}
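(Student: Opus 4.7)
I would first prove the second assertion — the characterization of $\mcl P^\upa(\mcl K)$ via total orderedness of the support — and then derive the closedness of $\mcl P^\upa(\mcl K)$ from it.

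For the forward direction of the characterization, take a nondecreasing path $\msf q : [0,1) \to S^D_+$ with $\msf q_\# \mathrm{Leb} = \mu$. Any $x \in \supp(\mu)$ is a limit $\msf q(u_n) \to x$ along some sequence $u_n \in [0,1)$, since open neighborhoods of $x$ have positive $\mu$-mass and hence positive Lebesgue preimage under $\msf q$. Given $x, y \in \supp(\mu)$ with associated sequences $u_n, v_n$, a subsequence can be extracted along which, say, $u_n \leq v_n$ for every $n$; monotonicity of $\msf q$ then gives $\msf q(v_n) - \msf q(u_n) \in S^D_+$, and closedness of $S^D_+$ passes this to the limit, yielding $y - x \in S^D_+$.

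For the converse direction, the key observation is that $\tr$ is strictly increasing on every totally ordered subset of $S^D_+$: for $x \leq y$ with $x \neq y$, the matrix $y - x$ is a nonzero positive semi-definite matrix, hence has strictly positive trace. Thus $\tr$ restricts to a continuous bijection from the compact totally ordered set $\supp(\mu)$ onto $\tr(\supp(\mu)) \subset \R$, with continuous inverse $\phi$ by compactness. Letting $\tilde{\mu} = \tr_\# \mu$ and $\tilde{\msf q}$ its right-continuous quantile function, one checks that $\tilde{\msf q}$ takes values in $\supp(\tilde{\mu}) \subset \tr(\supp(\mu))$, so $\msf q := \phi \circ \tilde{\msf q}$ is a well-defined path in $\supp(\mu) \subset \mcl K$ with $\msf q_\# \mathrm{Leb} = \mu$. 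For $u \leq v$ in $[0,1)$, $\tr(\msf q(u)) \leq \tr(\msf q(v))$ together with the comparability of $\msf q(u), \msf q(v)$ in $\supp(\mu)$ and the strict monotonicity of $\tr$ force $\msf q(u) \leq \msf q(v)$, so $\msf q$ is nondecreasing.

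For the closedness, I would use that $d$-convergence on $\mcl P(\mcl K)$ coincides with weak convergence (since $\mcl K$ is compact) and invoke the characterization: given $\mu_n \in \mcl P^\upa(\mcl K)$ converging to $\mu$, each $\supp(\mu_n)$ is totally ordered and it suffices to check the same for $\supp(\mu)$. If $x, y \in \supp(\mu)$ were incomparable, closedness of $S^D_+$ would give $\delta > 0$ with $\dist(\pm(y-x), S^D_+) \geq 2\delta$; then $U = B(x, \delta/2)$ and $V = B(y, \delta/2)$ are open sets of positive $\mu$-mass, so Portmanteau yields $x_n \in \supp(\mu_n) \cap U$ and $y_n \in \supp(\mu_n) \cap V$ for large $n$, and a triangle-inequality argument shows any such pair is incomparable, contradicting the total order on $\supp(\mu_n)$. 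The main delicacy lies in the converse direction: since the PSD order is only partial, one cannot invert a cumulative distribution function directly, and the role of $\tr$ is precisely to supply a scalar total order that is rigidly consistent with the PSD partial order on comparable pairs, allowing one to lift a scalar quantile function to an $S^D_+$-valued monotone path.
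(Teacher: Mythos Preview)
Your proof is correct and takes a genuinely different route from the paper. The paper reverses your order: it first proves closedness directly, using that $\mu \mapsto \msf q_\mu$ is an isometry from $(\mcl P^\upa_1(S^D_+),d)$ into $L^1([0,1),S^D)$, so a $d$-Cauchy sequence of monotone measures yields an $L^1$-Cauchy sequence of paths whose a.e.\ limit is again nondecreasing. It then leverages closedness in the converse direction of the characterization, approximating $\mu$ by empirical measures $\frac{1}{n}\sum_{i=1}^n \delta_{X_i}$ with $X_i$ i.i.d.\ $\sim \mu$; since $\supp(\mu)$ is totally ordered these are finitely supported on a chain and hence manifestly in $\mcl P^\upa(\mcl K)$, and closedness passes this to the limit. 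Your trace trick replaces that approximation-plus-closedness argument by a direct construction of the monotone path, which is more self-contained and does not appeal to the $L^1$ isometry; conversely, the paper's route reuses machinery (Proposition~\ref{p.approx_by_dirac} and the path parametrization) that is already central elsewhere in the paper. Your forward direction is also slightly more careful than the paper's, which tacitly assumes every support point is in the range of $\msf q_\mu$.
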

\begin{proof}
    Let $\mu_n \in \mcl P^\upa(\mcl K)$ be a sequence that converges to some $\mu \in \mcl P(\mcl K)$. The sequence $(\msf q_{\mu_n})_n$ is a Cauchy sequence in $L^1([0,1),\mcl K)$, let $\msf q \in L^1([0,1),\mcl K)$ denote its limit. There exists a subsequence $(n_k)_k$ such that $\msf q_{\mu_{n_k}} \to \msf q$ almost everywhere, so there exists an almost everywhere representative of $\msf q$ in $\mcl Q_1(S^D_+)$, we also denote this representant by $\msf q$ and we have $\mu = \text{Law}(\msf q(U)) \in \mcl P^\upa(\mcl K)$. 

    Let $\mu \in \mcl P^\upa(\mcl K)$ then for every $x,y \in \text{supp}(\mu)$ there exists $u,v \in [0,1)$ such that $x = \msf q_\mu(u)$ and $y = \msf q_\mu(v)$. We must have $u \leq v$ or $v \leq u$, and since $\msf q_\mu$ is nondecreasing, this implies $x \leq y$ or $y \leq x$ and $\text{supp}(\mu)$ is totally ordered. Conversely, assume that $\text{supp}(\mu)$ is totally ordered, let $(X_i)_i$ be a sequence of independent and identically distributed random variables with law $\mu$, set 
    \begin{equation*}
        \tilde \mu_n = \frac{1}{n} \sum_{i=1}^n \delta_{X_i}.
    \end{equation*}
    According to Proposition~\ref{p.approx_by_dirac}, $\tilde \mu_n \to \mu$ almost surely. Let us fix $\omega \in \Omega$ such that $\tilde \mu_n(\omega) \to \mu$ and define $\mu_n = \tilde \mu_n(\omega)$. Since $\text{supp}(\mu)$ is totally ordered, for every $n \geq 1$ there exists a permutation $s_n \in \mcl S_n$ such that the sequence $(X_{s_n(i)}(\omega))_{1 \leq i \leq n}$ is nondecreasing. Let 
    \begin{equation*}
        \msf q_n = \sum_{i =1}^n X_{s_n(i)}(\omega)\mathbf{1}_{[\frac{i-1}{n},\frac{i}{n})},
    \end{equation*}
    we have $\msf q_n \in \mcl Q(S^D_+) \cap L^\infty$ and $\mu_n = \text{Law}(\msf q_n(U)) \in \mcl P^\upa(\mcl K)$. Since $\mu_n \to \mu$ and $\mcl P^\upa(\mcl K)$ is closed, we have $\mu \in \mcl P^\upa(\mcl K)$.
\end{proof}

\begin{proposition} \label{P.closed_convex_hull_of_monotone}
    The closed convex hull of $\mcl P^\upa(\mcl K)$ is $\mcl P(\mcl K)$ and $\mcl P^\upa(\mcl K)$ is an extreme set in $\mcl P(\mcl K)$.
\end{proposition}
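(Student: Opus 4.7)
The plan is to handle the two assertions separately, both reducing to the characterization of $\mcl P^\upa(\mcl K)$ via totally ordered supports in Proposition~\ref{p.totally_ordered_supp}.

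For the first assertion, I would observe that every Dirac mass $\delta_x$ with $x \in \mcl K$ has a singleton (hence trivially totally ordered) support, so $\delta_x \in \mcl P^\upa(\mcl K)$. In particular $\mcl P^\upa(\mcl K)$ contains $\{\delta_x : x \in \mcl K\}$, and it therefore suffices to show that the closed convex hull of the latter set equals $\mcl P(\mcl K)$. This is a direct consequence of Proposition~\ref{p.approx_by_dirac}: given any $\mu \in \mcl P(\mcl K)$, an empirical sequence $\frac{1}{n}\sum_{i=1}^n \delta_{X_i}$ built from i.i.d.\ samples with law $\mu$ is a convex combination of Dirac masses in $\mcl K$ and converges almost surely to $\mu$ in the optimal transport distance. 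The reverse inclusion $\mcl P^\upa(\mcl K) \subset \mcl P(\mcl K)$ is immediate, and $\mcl P(\mcl K)$ is closed and convex, yielding equality.

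For the second assertion, let $\eta \in \mcl P(\mcl P(\mcl K))$ be such that $\mu := \text{Bar}(\eta) \in \mcl P^\upa(\mcl K)$. By Proposition~\ref{p.totally_ordered_supp} the support $\text{supp}(\mu) \subset \mcl K$ is totally ordered. Since $\text{supp}(\mu)$ is a closed subset of $\mcl K$ with $\mu(\text{supp}(\mu)) = 1$, the identity
\begin{e*}
    1 = \mu(\text{supp}(\mu)) = \int_{\mcl P(\mcl K)} \nu(\text{supp}(\mu)) \, \d\eta(\nu)
\end{e*}
combined with $\nu(\text{supp}(\mu)) \leq 1$ forces $\nu(\text{supp}(\mu)) = 1$ for $\eta$-almost every $\nu$. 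As $\text{supp}(\mu)$ is closed, this means $\text{supp}(\nu) \subset \text{supp}(\mu)$ for $\eta$-almost every $\nu$. Any subset of a totally ordered set is totally ordered, so $\text{supp}(\nu)$ is totally ordered $\eta$-almost surely, and another application of Proposition~\ref{p.totally_ordered_supp} gives $\nu \in \mcl P^\upa(\mcl K)$ for $\eta$-almost every $\nu$, i.e.\ $\eta \in \mcl P(\mcl P^\upa(\mcl K))$.

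I do not expect any genuine obstacle here; the only mildly delicate point is the passage from $\nu(\text{supp}(\mu)) = 1$ to $\text{supp}(\nu) \subset \text{supp}(\mu)$, which relies on the fact that $\text{supp}(\mu)$ is closed and $\mcl K$ is a nice (compact Polish) space, so supports behave as expected.
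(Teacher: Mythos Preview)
Your proof is correct and, for the extremality assertion, essentially identical to the paper's: both argue that $\eta$-almost every $\nu$ is supported in $\text{supp}(\text{Bar}(\eta))$, which is totally ordered, and then invoke Proposition~\ref{p.totally_ordered_supp}.

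For the closed convex hull assertion your route is slightly more direct than the paper's. The paper passes through Proposition~\ref{p.convex_hull=bar} (the barycenter description of the closed convex hull) and then, given $\mu \in \mcl P(\mcl K)$, explicitly constructs $\eta_n = \sum_i c_i^n \delta_{\delta_{x_i^n}} \in \mcl P(\mcl P^\upa(\mcl K))$ with $\text{Bar}(\eta_n) = \mu_n \to \mu$, extracting a limit $\eta$ with $\text{Bar}(\eta) = \mu$. You instead observe directly that each $\delta_x \in \mcl P^\upa(\mcl K)$ and that empirical measures (convex combinations of such Diracs) approximate any $\mu$ by Proposition~\ref{p.approx_by_dirac}, bypassing the barycenter machinery entirely. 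Your argument is shorter and self-contained; the paper's version has the mild advantage of illustrating the barycenter framework that is reused later, but the underlying content is the same.
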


\begin{proof}
    According to Proposition~\ref{p.convex_hull=bar}, to show that the closed convex hull of $\mcl P^\upa(\mcl K)$ is $\mcl P(\mcl K)$, it suffices to show that for every $\mu \in \mcl P(\mcl K)$, there exists $\eta \in  \mcl P( \mcl P^\upa(\mcl K))$ such that $\mu = \text{Bar}(\eta)$. 
    
    According to Proposition~\ref{p.approx_by_dirac}, there exists a sequence of finitely supported measures $(\mu_n)_n$ such that $\mu_n \to \mu$. For every $n \geq 1$, there exists $p_n, c_i^n,x_i^n$ such that $\mu_n = \sum_{i = 1}^{p_n}c_i^n \delta_{x_i^n}$. Define,
    \begin{equation*}
        \eta_n = \sum_{i=1}^{p_n} c_i^n \delta_{\delta_{x_i^n}},
    \end{equation*}
     we have $\mu_n = \text{Bar}(\eta_n)$. Up to extraction, the sequence $(\eta_n)_n$ converges to some  $\eta \in  \mcl P( \mcl P^\upa(\mcl K))$. Therefore, according to Proposition~\ref{p.def_bar}, we have $\mu = \text{Bar}(\eta)$. This proves that the closed convex hull of $\mcl P^\upa(\mcl K)$ is $\mcl P(\mcl K)$.

    Let $\eta$ be a probability measure on $\mcl P(\mcl K)$, such that $\text{Bar}(\eta) \in \mcl P^\upa(\mcl K)$. Let $A$ denote the support of $\text{Bar}(\eta)$, we have,
    \begin{equation*}
        0 = \text{Bar}(\eta)(A^c) = \int_{\mcl P(\mcl K)} \nu(A^c) d\eta(\nu).
    \end{equation*}
    So $\eta$-almost surely $\nu(A^c) = 0$. The set $A$ is a closed and according to Proposition~\ref{p.totally_ordered_supp}, $A$ is totally ordered. In particular, $\eta$-almost surely, the support of the measure $\nu$ is contained in $A$, therefore $\nu \in \mcl P^\upa(\mcl K)$. Thus, $\eta \in \mcl P( \mcl P^\upa(\mcl K))$, which concludes the proof.
\end{proof}

\subsection{Building the extension} 

In this section, we assume that the function $\xi : \R^{D \times D} \to \R$ is strictly convex on $S^D_+$. As usual $U$ denotes a uniform random variable in $[0,1)$, let $L^\infty_{\leq 1}$ denote the set of functions $[0,1) \to S^D_+$ that are essentially bounded by $1$ in norm and define
\begin{equation}
    \mcl P_\xi^\upa = \left\{ \text{Law}(\nabla \xi(\msf q(U))) \big|\,  \msf q \in \mcl Q(S^D_+) \cap L^\infty_{\leq 1} \right\} \subset \mcl P^\upa(S^D_+).
\end{equation}
Recall that $B(0,1)$ denotes the unit ball centered at $0$ in $S^D$, we let 
\begin{e*}
    \mcl K_\xi = \nabla \xi(B(0,1) \cap S^D_+).
\end{e*} 
The goal of this subsection is to prove that any Lipschitz and concave function on the nonconvex set $\mcl P_\xi^\upa$ extends into a Lipschitz and concave function on $\mcl M_1(\mcl K_\xi)$ with the same Lipschitz constant.

Note that we have $\mcl P_\xi^\upa \subset \mcl P^\upa(\mcl K_\xi)$, but in general the inclusion is strict when $D > 1$. The set $\mcl P_\xi^\upa$ is the image of $\mcl P^\upa(B(0,1) \cap S^D_+)$ by the mapping $\mu \mapsto \nabla \xi_\# \mu$. The closed convex envelope and the closed linear span of $\mcl P_\xi^\upa$ are respectively $\mcl P(\mcl K_\xi)$ and $\mcl M_1(\mcl K_\xi)$. Note that since $\mcl K_\xi$ is compact, $\mcl M_1(\mcl K_\xi)$ is in fact equal to the set of signed measures on $\mcl K_\xi$ with finite mass. 
\begin{definition}
    We say that a function $\varphi : \mcl P_\xi^\upa \to \R$ is pre-concave, when for every $\mu,\nu \in \mcl P^\upa_\xi$ and $\lambda \in [0,1]$,
    \begin{equation*}
        \lambda \mu + (1-\lambda) \nu \in \mcl P^\upa_\xi \implies \lambda \varphi( \mu) + (1-\lambda) \varphi(\nu) \leq \varphi( \lambda \mu + (1-\lambda) \nu). 
    \end{equation*}
\end{definition}
As in Section~\ref{s.fenchel-moreau}, we equip $\mcl M_1(\mcl K_\xi)$ with the norm $|\cdot|_{\mcl M}$ and we denote by $d$ the distance induced by $|\cdot|_{\mcl M}$. The restriction of the distance $d$ to $\mcl P(\mcl K_\xi)$ is the optimal transport distance. Given a continuous function $\varphi : \mcl P_\xi^\upa \to \R$, for every $\mu' \in \mcl M_1( \mcl K_\xi)$, we define
\begin{equation} \label{e.def ext}
     \bar \varphi(\mu') = \sup_{\eta \in \mcl P \left( \mcl P^\upa_\xi \right)} \left\{ \int \varphi d\eta - d(\text{Bar}(\eta), \mu') \right\}.
\end{equation}
For every function $\chi \in \mcl L(\mcl K_\xi)$, we also define 
\begin{equation}
    \varphi^\xi_*(\chi) = \inf_{\mu \in \mcl P_\xi^\upa} \left\{ \int \chi d\mu - \varphi(\mu) \right\}. 
\end{equation}
We choose $x_0 = \nabla \xi(0)$ as a reference point for the norm $|\cdot|_{\mcl L}$ in $\mcl L(\mcl K_\xi)$.
we equip $\mcl P(\mcl P^\upa_\xi)$ with the optimal transport distance inherited from the restriction of the distance $d$ to $\mcl P^\upa_\xi$. That is for every $\eta,\eta' \in \mcl P(\mcl P^\upa_\xi)$,
\begin{equation}
    \msf d(\eta,\eta') = \inf_{\pi \in \Pi(\eta,\eta')} \left\{ \int_{\mcl P^\upa_\xi} d(\nu,\nu') \d\pi(\nu,\nu') \right\}.
\end{equation}
Here $\Pi(\eta,\eta')$ denotes the set of measures $\pi \in \mcl P (\mcl P^\upa_\xi \times \mcl P^\upa_\xi)$ with marginals $\pi_1 = \eta$ and $\pi_2 = \eta'$. As usual, the optimal transport distance $\msf d$ admits the dual representation \cite[Theorem~5.10]{villani2},
\begin{equation} \label{e.double OT distance dual}
    \msf d(\eta,\eta') = \sup_{X} \left\{ \int X(\nu) \d\eta(\nu) -\int X(\nu) \d\eta'(\nu) \right\},
\end{equation}
where the supremum is taken over the set of 1-Lipschitz functions $X : \mcl P^\upa_\xi \to \R$. 

The goal of this subsection is to show the following extension theorem, which will be applied to the function $\psi$ defined in \eqref{e.def_psi} in the next section.
\begin{theorem} \label{t.extension} 
    For every $1$-Lipschitz and pre-concave function $\varphi : \mcl P_\xi^\upa \to \R$, the function $\bar \varphi : \mcl M_1(\mcl K_\xi) \to \R$ defined by \eqref{e.def ext} is a $1$-Lipschitz and concave extension of $\varphi$. In addition, for every $\mu' \in \mcl M(\mcl K_\xi)$, we have 
    \begin{equation} \label{e.bidual ext}
        \overline \varphi(\mu') = \min_{\chi \in \mcl L_{ \leq 1}(\mcl K_\xi)} \left\{ \int \chi \d\mu - \varphi^\xi_*(\chi) \right\}.
    \end{equation}
\end{theorem}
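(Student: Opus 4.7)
The plan is to verify the three claimed properties of $\bar\varphi$ (Lipschitz, concave, extending $\varphi$) and then establish the dual formula \eqref{e.bidual ext}. The $1$-Lipschitz property of $\bar\varphi$ follows immediately from the triangle inequality for $d$, since each integrand $\mu' \mapsto \int \varphi \, d\eta - d(\text{Bar}(\eta), \mu')$ is $1$-Lipschitz in $\mu'$, and the supremum of $1$-Lipschitz functions is $1$-Lipschitz. Concavity follows by noting that for $\eta_0, \eta_1 \in \mcl P(\mcl P^\upa_\xi)$, the combination $\eta_\lambda = \lambda \eta_0 + (1-\lambda) \eta_1$ lies in $\mcl P(\mcl P^\upa_\xi)$ and satisfies $\text{Bar}(\eta_\lambda) = \lambda \text{Bar}(\eta_0) + (1-\lambda) \text{Bar}(\eta_1)$ by Proposition~\ref{p.def_bar}; combining the linearity of $\int \varphi \, d\eta$ in $\eta$ with the convexity of $d$ yields concavity of $\bar\varphi$.

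For the dual formula \eqref{e.bidual ext}, I would apply Sion's minimax theorem. Writing $d(\text{Bar}(\eta), \mu') = \sup_{\chi \in \mcl L_{\leq 1}(\mcl K_\xi)} \int \chi \, d(\text{Bar}(\eta) - \mu')$ via Kantorovich-Rubinstein duality gives
\[
\bar\varphi(\mu') = \sup_{\eta} \inf_{\chi \in \mcl L_{\leq 1}} \Bigl\{ \int [\varphi(\nu) - \textstyle{\int} \chi \, d\nu] \, d\eta(\nu) + \int \chi \, d\mu' \Bigr\}.
\]
The objective is jointly affine; $\mcl P(\mcl P^\upa_\xi)$ is weakly compact (since $\mcl P^\upa_\xi$ is compact by Proposition~\ref{p.totally_ordered_supp}) and convex; and $\mcl L_{\leq 1}(\mcl K_\xi)$ is convex and compact under local uniform convergence by Arzel\`a-Ascoli. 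Hence Sion's theorem justifies exchanging $\sup_\eta$ and $\inf_\chi$, and evaluating the inner supremum $\sup_{\nu \in \mcl P^\upa_\xi}\{\varphi(\nu) - \int \chi \, d\nu\} = -\varphi^\xi_*(\chi)$ produces the formula.

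For the extension property, $\bar\varphi(\mu) \geq \varphi(\mu)$ on $\mcl P^\upa_\xi$ is immediate by choosing $\eta = \delta_\mu$. The reverse inequality reduces, via the Kantorovich-Rubinstein bound applied inside the supremum, to finding a supergradient $\chi^* \in \mcl L_{\leq 1}(\mcl K_\xi)$ satisfying $\varphi(\nu) - \varphi(\mu) \leq \int \chi^* \, d(\nu - \mu)$ for all $\nu \in \mcl P^\upa_\xi$. To produce $\chi^*$, I would introduce the concave envelope $\Phi : \mcl P(\mcl K_\xi) \to \R$ defined by $\Phi(\rho) = \sup\{\int \varphi \, d\eta : \eta \in \mcl P(\mcl P^\upa_\xi), \text{Bar}(\eta) = \rho\}$. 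By linearity $\Phi$ is concave, and by compactness of $\mcl P(\mcl P^\upa_\xi)$ together with continuity of $\text{Bar}$ (Proposition~\ref{p.def_bar}) it is upper semicontinuous. Proposition~\ref{p.fake_jensen}, applicable since $\mcl P^\upa_\xi$ is extreme in its convex hull $\mcl P(\mcl K_\xi)$ by Proposition~\ref{P.closed_convex_hull_of_monotone}, ensures $\Phi|_{\mcl P^\upa_\xi} = \varphi$. Hahn-Banach separation of $(\mu, \varphi(\mu) + \epsilon)$ from the closed hypograph of $\Phi$ in $\mcl M_1(\mcl K_\xi) \times \R$ then produces an approximate supergradient, and a compactness extraction using local uniform convergence on $\mcl L_{\leq 1}$ yields $\chi^*$ in the limit $\epsilon \to 0$.

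The main obstacle is enforcing the Lipschitz bound $|\chi^*|_{\mcl L} \leq 1$ on the supergradient extracted from Hahn-Banach, since $\Phi$ is not \emph{a priori} $1$-Lipschitz on all of $\mcl P(\mcl K_\xi)$. I expect to handle this by applying the Fenchel-Moreau theorem (Theorem~\ref{t.fenchel moreau}) to $\Phi$ and computing $\Phi_* = \varphi^\xi_*$ directly, then invoking a variant of Proposition~\ref{p.dual of lipschitz} adapted to the restriction of $\varphi$ to $\mcl P^\upa_\xi$ in order to confine the effective domain of the dual infimum to $\mcl L_{\leq 1}(\mcl K_\xi)$.
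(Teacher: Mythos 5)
Your treatment of the Lipschitz bound and the concavity of $\bar\varphi$ matches the paper's Proposition~\ref{p.ext is lip and concave}, and the Sion minimax derivation of \eqref{e.bidual ext} (exchanging the $\sup$ over $\mcl P(\mcl P^\upa_\xi)$ with the $\inf$ over $\mcl L_{\leq 1}(\mcl K_\xi)$, both convex and compact) is a valid alternative to the paper's combination of Proposition~\ref{p.dual of ext} and Corollary~\ref{c.fenchel moreau for lipschitz}; the two devices compute essentially the same biconjugate.

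However, your argument for the extension property $\bar\varphi\big|_{\mcl P^\upa_\xi} = \varphi$ has a genuine gap, and the fix you sketch does not close it. The reduction to producing a supergradient $\chi^* \in \mcl L_{\leq 1}(\mcl K_\xi)$ is correct, and your envelope $\Phi$ is indeed concave, upper semicontinuous, and agrees with $\varphi$ on $\mcl P^\upa_\xi$ by Proposition~\ref{p.fake_jensen} (the extremality needed is Proposition~\ref{p.Pupaxi is an extreme set }, not Proposition~\ref{P.closed_convex_hull_of_monotone}). But Hahn-Banach separation of the hypograph of $\Phi$ yields a supergradient whose $|\cdot|_{\mcl L}$-norm is governed by the local Lipschitz modulus of $\Phi$, and there is no a priori reason for $\Phi$ to be $1$-Lipschitz on $\mcl P(\mcl K_\xi)$: transporting an $\eta$ achieving $\Phi(\rho)$ to a measure with barycenter a nearby $\rho' \notin \mcl P^\upa_\xi$ need not stay supported on $\mcl P^\upa_\xi$ (extremality is of no help when $\rho' \notin \mcl P^\upa_\xi$). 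Moreover, the proposed resort to Fenchel--Moreau and a ``variant of Proposition~\ref{p.dual of lipschitz}'' cannot work as stated: since $\mcl P^\upa_\xi$ is compact, $\varphi^\xi_*(\chi) = \inf_{\nu \in \mcl P^\upa_\xi}\{\int\chi\,\d\nu - \varphi(\nu)\}$ is finite for \emph{every} $\chi \in \mcl L(\mcl K_\xi)$, so the effective domain of $\varphi^\xi_*$ is all of $\mcl L(\mcl K_\xi)$ and cannot be confined to $\mcl L_{\leq 1}(\mcl K_\xi)$. Fenchel--Moreau applied to $\Phi$ only gives $\varphi(\mu) = \inf_{\chi \in \mcl L(\mcl K_\xi)}\{\int\chi\,\d\mu - \varphi^\xi_*(\chi)\}$, and you are left exactly where you started: showing that restricting this infimum to $\mcl L_{\leq 1}(\mcl K_\xi)$ — which can only raise it, and equals $\bar\varphi(\mu)$ by your Sion computation — changes nothing. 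The paper sidesteps this on the primal side: in Proposition~\ref{p.bar varphi is an ext}, given $\eta \in \mcl P(\mcl P^\upa_\xi)$ and $\mu' \in \mcl P^\upa_\xi$, disintegrating an optimal coupling of $\mathrm{Bar}(\eta)$ and $\mu'$ produces $\eta'$ with $\mathrm{Bar}(\eta') = \mu'$ and $\msf d(\eta,\eta') \leq d(\mathrm{Bar}(\eta),\mu')$, and — since $\mu' \in \mcl P^\upa_\xi$ — extremality forces $\eta'$ to remain supported on $\mcl P^\upa_\xi$, after which the $1$-Lipschitz property of $\varphi$ and Jensen's inequality conclude. That transport step, which puts extremality to work precisely where it is available, is what your duality-only argument is missing.
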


Note that given $S \subset S^D_+$ and $\chi : S \to \R$, by letting 
\begin{e*}
    \overline{\chi}(x) = \inf_{y \in S } \left\{ \chi(y) + |\chi|_{\textrm{Lip}}|x-y|\right\},
\end{e*}
we define a Lipschitz extension of $\chi$ to $S^D_+$ with the same Lipschitz constant. Therefore, in Theorem~\ref{t.extension}, \eqref{e.bidual ext} remains true if we take the minimum over $\mcl L_{ \leq 1}(S^D_+)$ rather than $\mcl L_{ \leq 1}(\mcl K_\xi)$.

\begin{proposition} \label{p.injective_gradient}
    Let $\xi \in \mcl C^\infty(\R^{D \times D},\R)$ be a strictly convex function on $S^D_+$, then the function $\nabla \xi : S^D_+ \to \R^{D \times D}$ is injective.
\end{proposition}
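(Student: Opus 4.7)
The statement is a classical fact in convex analysis: a differentiable strictly convex function on a convex set has injective gradient. Since $S^D_+$ is convex, strict convexity of $\xi|_{S^D_+}$ is strict convexity along every line segment lying in $S^D_+$, which reduces the problem to a one-variable argument.

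Concretely, I would fix $x, y \in S^D_+$ with $x \neq y$ and consider the one-dimensional restriction $g(\lambda) = \xi((1-\lambda) y + \lambda x)$ on $[0,1]$. Since the segment lies in $S^D_+$ and $\xi$ is strictly convex there, $g$ is strictly convex, differentiable, and satisfies $g'(0) = \nabla \xi(y) \cdot (x-y)$.

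The key step is to upgrade strict chord convexity of $g$ to the strict subgradient inequality $g(1) > g(0) + g'(0)$. If instead $g(1) = g(0) + g'(0)$, then ordinary convexity would force $g(\lambda) \geq g(0) + \lambda g'(0) = (1-\lambda) g(0) + \lambda g(1)$ for every $\lambda \in (0,1)$, contradicting the strict chord inequality granted by strict convexity. Translating back to $\xi$ gives
\[ \xi(x) > \xi(y) + \nabla \xi(y) \cdot (x-y), \]
and by symmetry $\xi(y) > \xi(x) + \nabla \xi(x) \cdot (y-x)$. Adding these two inequalities yields $(\nabla \xi(x) - \nabla \xi(y)) \cdot (x-y) > 0$, so in particular $\nabla \xi(x) \neq \nabla \xi(y)$, which is exactly injectivity of $\nabla \xi$ on $S^D_+$.

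There is no real obstacle here: the argument is entirely standard and the only subtlety is the upgrade from strict chord convexity to the strict tangent inequality, which is dispatched by the short contradiction above. The hypothesis $\xi \in \mcl C^\infty$ is stronger than needed; differentiability on a neighborhood of $S^D_+$ together with strict convexity on $S^D_+$ is all the argument uses.
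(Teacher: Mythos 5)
Your proof is correct and follows essentially the same approach as the paper: both restrict $\xi$ to a line segment in $S^D_+$, use strict convexity of the one-variable restriction to establish strict monotonicity of the gradient, i.e.\ $(\nabla \xi(x) - \nabla \xi(y)) \cdot (x-y) > 0$ for $x \neq y$, and conclude injectivity. The only difference is in packaging: the paper argues that the derivative $\gamma'$ of the 1D restriction is strictly increasing and compares $\gamma'(0)$ with $\gamma'(1)$ through interior points $\gamma'(1/3) < \gamma'(2/3)$ (using continuity of $\nabla\xi$ at the endpoints), whereas you establish the strict tangent inequality by contradiction against the strict chord inequality and then add the two symmetric tangent inequalities — a standard and equally valid variant of the same computation.
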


\begin{proof}
    Let $a \neq b$ in $S^D_+$, the function $\gamma : t \mapsto \xi(ta +(1-t)b)$ is strictly convex on $[0,1]$, so its derivative is strictly increasing on $(0,1)$ and for every $s < t$ we have
    \begin{equation}
       \nabla \xi(sa +(1-s)b) \cdot(a-b) < \nabla \xi(ta +(1-t)b) \cdot (a-b).
    \end{equation}
    Letting $s \to 0$ and $t \to 1$, we obtain 
    \begin{equation}
        \xi(b) \cdot(a-b) \leq \gamma'(1/3) <\gamma'(2/3) \leq \xi(a) \cdot (a-b).
    \end{equation}
    This justifies $\nabla \xi(a) \neq \nabla \xi(b)$.
\end{proof}

\begin{proposition} \label{p.Pupaxi is an extreme set }
    The set $\mcl P^\upa_\xi$ is compact and is an extreme set in its closed convex hull $\mcl P(\mcl K_\xi)$.
\end{proposition}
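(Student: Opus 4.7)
The plan is to reduce the proposition to the already-established analogues for $\mcl P^\upa(\mcl K)$ with $\mcl K := B(0,1) \cap S^D_+$ (Propositions~\ref{p.totally_ordered_supp} and \ref{P.closed_convex_hull_of_monotone}), using $\nabla \xi$ as a bridge. First I would observe that $\nabla \xi$ is continuous by smoothness and injective on $S^D_+$ by Proposition~\ref{p.injective_gradient}, so that its restriction to the compact set $\mcl K$ is a continuous bijection onto $\mcl K_\xi$, hence a homeomorphism. The induced pushforward
\begin{e*}
    F : \mcl P(\mcl K) \to \mcl P(\mcl K_\xi), \qquad F(\mu) = (\nabla \xi)_\# \mu,
\end{e*}
is then an affine homeomorphism for the optimal transport topology (which coincides with weak convergence since both spaces have compact state space), and the definition of $\mcl P^\upa_\xi$ as the set of laws of $\nabla \xi(\msf q(U))$ for $\msf q \in \mcl Q(S^D_+) \cap L^\infty_{\leq 1}$ gives directly $F(\mcl P^\upa(\mcl K)) = \mcl P^\upa_\xi$.

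From this, compactness and the identification of the closed convex hull are immediate. Indeed, $\mcl P^\upa(\mcl K)$ is compact by Proposition~\ref{p.totally_ordered_supp}, so its image $\mcl P^\upa_\xi = F(\mcl P^\upa(\mcl K))$ is compact; and the closed convex hull of $\mcl P^\upa(\mcl K)$ in $\mcl P(\mcl K)$ is $\mcl P(\mcl K)$ by Proposition~\ref{P.closed_convex_hull_of_monotone}, so applying the affine homeomorphism $F$ one concludes that the closed convex hull of $\mcl P^\upa_\xi$ in $\mcl P(\mcl K_\xi)$ is $\mcl P(\mcl K_\xi)$.

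For the extreme set property, I would take $\eta \in \mcl P(\mcl P(\mcl K_\xi))$ with $\mu := \text{Bar}(\eta) \in \mcl P^\upa_\xi$ and show $\eta(\mcl P^\upa_\xi) = 1$. Writing $\mu = F(\tilde \mu)$ for some $\tilde \mu \in \mcl P^\upa(\mcl K)$, the set $A := \supp(\mu) = \nabla \xi(\supp(\tilde \mu))$ is the image under the homeomorphism $\nabla \xi$ of the totally ordered set $\supp(\tilde \mu)$ (Proposition~\ref{p.totally_ordered_supp}). Exactly as in the proof of Proposition~\ref{P.closed_convex_hull_of_monotone}, the identity $\int \nu(A^c) \d\eta(\nu) = \mu(A^c) = 0$ forces $\supp(\nu) \subset A$ for $\eta$-almost every $\nu$. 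For each such $\nu$, the pullback $\tilde \nu := F^{-1}(\nu) = ((\nabla \xi)^{-1})_\# \nu$ is a probability measure on $\mcl K$ supported on $(\nabla \xi)^{-1}(A) = \supp(\tilde \mu)$, which is totally ordered; Proposition~\ref{p.totally_ordered_supp} then gives $\tilde \nu \in \mcl P^\upa(\mcl K)$, so $\nu = F(\tilde \nu) \in \mcl P^\upa_\xi$, as required.

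No step looks particularly delicate once the homeomorphism $F$ is in hand; the main things to verify carefully are the identity $F(\mcl P^\upa(\mcl K)) = \mcl P^\upa_\xi$ and the support identity $\supp(F(\tilde \mu)) = \nabla \xi(\supp(\tilde \mu))$, both of which are automatic from the fact that $\nabla \xi|_{\mcl K}$ is a homeomorphism. Once these are in place, the extreme property is simply the transport across $F$ of the totally-ordered-support characterization of $\mcl P^\upa(\mcl K)$.
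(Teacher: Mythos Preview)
Your proof is correct and follows essentially the same strategy as the paper's: both pull everything back through the homeomorphism $\nabla\xi|_{\mcl K}$ (injectivity from Proposition~\ref{p.injective_gradient}, continuity from smoothness, compactness of $\mcl K$) and reduce to the known properties of $\mcl P^\upa(\mcl K)$. The only difference is in the extreme-set step: the paper first invokes Proposition~\ref{P.closed_convex_hull_of_monotone} for $\mcl P^\upa(\mcl K_\xi)$, pulls $\eta$ back to a measure on $\mcl P(\mcl K)$ with barycenter $\rho$, and applies Proposition~\ref{P.closed_convex_hull_of_monotone} again, whereas you bypass this and argue directly via supports and the totally-ordered characterization of Proposition~\ref{p.totally_ordered_supp}; this is a mild streamlining rather than a different route.
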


\begin{proof}
    The set $\mcl P^\upa_\xi$ is the image of $\mcl P^\upa(B(0,1) \cap S^D_+)$ by the continuous map $ \nu \mapsto \nabla \xi_\# \nu$. According to Proposition \ref{p.totally_ordered_supp}, $\mcl P^\upa(B(0,1) \cap S^D_+)$ is compact, so $\mcl P^\upa_\xi$ is compact. Let $\eta \in \mcl P(\mcl P(\mcl K_\xi))$ be such that $\text{Bar}(\eta) \in \mcl P^\upa_\xi$ and let $\rho \in \mcl P^\upa(B(0,1) \cap S^D_+)$ such that $\text{Bar}(\eta) = \nabla \xi_\# \rho$. In particular, we have $\text{Bar}(\eta) \in \mcl P^\upa( \mcl K_\xi )$, since $\mcl P^\upa( \mcl K_\xi)$ is an extreme set in its convex hull according to Proposition \ref{P.closed_convex_hull_of_monotone}, it follows that $\eta$ is supported on $\mcl P^\upa( \mcl K_\xi)$. Therefore, for every $\mu$ in the support of $\eta$, we have $\mu \in \mcl P^\upa(\mcl K_\xi)$ so the path $\msf q_\mu \in \mcl Q_\infty(S^D_+)$ is well-defined and valued in $\mcl K_\xi$. In particular given $\mu$ in the support of $\eta$, there exists a, possibly non-monotone, map $\msf p_\mu : [0,1) \to B(0,1) \cap S^D_+$ such that for every $u \in [0,1)$,
    \begin{e*}
        \msf q_\mu(u) = \nabla \xi( \msf p_\mu(u)).
    \end{e*}
    Moreover, we have $\mu = \nabla \xi_\# \text{Law}(\msf p_\mu(U))$, this implies
    \begin{equation}
        \nabla \xi_\# \int \text{Law}(\msf p_\mu(U)) \d\eta(\mu) = \nabla \xi_\#  \rho.
    \end{equation}
    According to Proposition \ref{p.injective_gradient}, $\nabla \xi : S^D_+ \to S^D_+$ is injective, so we have 
    \begin{e*}
        \int \text{Law}(\msf p_\mu(U)) \d\eta(\mu) = \rho.
    \end{e*}
    Finally, since $\rho \in \mcl P^\upa(B(0,1) \cap S^D_+)$, and $\mcl P^\upa( B(0,1) \cap S^D_+)$ is an extreme set in its convex hull according to Proposition \ref{P.closed_convex_hull_of_monotone}, we deduce that $\eta$-almost surely $\text{Law}(\msf p_\mu(U)) \in \mcl P^\upa( B(0,1) \cap S^D_+)$. Hence, $\eta$-almost surely $\mu \in \mcl P^\upa_\xi$ which concludes.
    \end{proof}
\begin{proposition} \label{p.ext is lip and concave}
    For any bounded function $\varphi : \mcl P^\upa_\xi \to \R$, the function $\bar \varphi$ defined by \eqref{e.def ext} is concave and $1$-Lipschitz on $\mcl M_1(\mcl K_\xi)$.
\end{proposition}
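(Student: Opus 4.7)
The plan is to establish the two properties separately, and both will follow from rewriting $\bar\varphi$ as a partial supremum of a function that behaves well under convex combinations. First I would check that $\bar\varphi$ is finite: since $\varphi$ is bounded by some $M \geq 0$ and $d \geq 0$, taking $\eta = \delta_{\nu_0}$ for an arbitrary fixed $\nu_0 \in \mcl P^\upa_\xi$ gives $\bar\varphi(\mu') \geq \varphi(\nu_0) - d(\nu_0, \mu')$, while clearly $\bar\varphi(\mu') \leq M$. So the sup in \eqref{e.def ext} is a real number.

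For the Lipschitz bound, the argument is the standard inf-convolution trick: for any $\eta \in \mcl P(\mcl P^\upa_\xi)$ and any $\mu', \mu'' \in \mcl M_1(\mcl K_\xi)$, the triangle inequality applied to $d = |\cdot|_{\mcl M}$ gives
\begin{equation*}
    \int \varphi \d\eta - d(\mathrm{Bar}(\eta), \mu') \leq \int \varphi \d\eta - d(\mathrm{Bar}(\eta), \mu'') + d(\mu', \mu'').
\end{equation*}
Taking the supremum over $\eta$ on both sides yields $\bar\varphi(\mu') \leq \bar\varphi(\mu'') + d(\mu', \mu'')$, and by symmetry $|\bar\varphi(\mu') - \bar\varphi(\mu'')| \leq d(\mu', \mu'')$.

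For concavity, I would observe that the function $G : \mcl P(\mcl P^\upa_\xi) \times \mcl M_1(\mcl K_\xi) \to \R$ defined by $G(\eta, \mu') = \int \varphi \d\eta - d(\mathrm{Bar}(\eta), \mu')$ is jointly concave on its domain, which is a convex subset of the product vector space. Indeed, $\eta \mapsto \int \varphi \d\eta$ is linear, and $(\eta, \mu') \mapsto d(\mathrm{Bar}(\eta), \mu') = |\mathrm{Bar}(\eta) - \mu'|_{\mcl M}$ is convex since $\mathrm{Bar}$ is linear (by Proposition~\ref{p.def_bar}) and $|\cdot|_{\mcl M}$ is a norm on the ambient vector space containing both $\mathrm{Bar}(\eta)$ and $\mu'$. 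Then $\bar\varphi(\mu') = \sup_\eta G(\eta, \mu')$ is a partial supremum of a jointly concave function over a convex set of parameters that does not depend on $\mu'$, which is a standard operation preserving concavity. This concludes both parts of the proof; I do not anticipate a serious obstacle, as the key structural facts (linearity of $\mathrm{Bar}$ and the norm property of $|\cdot|_{\mcl M}$) are already established.
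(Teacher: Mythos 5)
Your proof is correct and follows essentially the same route as the paper: the paper also observes that $(\eta,\mu') \mapsto \int \varphi\,\d\eta - d(\mathrm{Bar}(\eta),\mu')$ is jointly concave (hence its partial supremum is concave) and that for each fixed $\eta$ the map in $\mu'$ is $1$-Lipschitz (hence the supremum is $1$-Lipschitz). The only difference is that you spell out the finiteness of $\bar\varphi$ and the linearity of $\mathrm{Bar}$ explicitly, which the paper leaves implicit; this is a welcome but minor elaboration.
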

\begin{proof}
    The function $(\eta,\mu') \mapsto \int \varphi \d\eta - d(\text{Bar}(\eta), \mu')$ is concave on $\mcl P \left( \mcl P^\upa_\xi \right) \times \mcl M_1(\mcl K_\xi)$, so $\bar \varphi$ is concave as the supremum of a jointly concave functionnal. For every $\eta \in \mcl P \left( \mcl P^\upa_\xi \right)$, the function $\mu' \mapsto \int \varphi \d\eta - d(\text{Bar}(\eta), \mu')$ is $1$-Lipschitz, so $\bar \varphi$ is $1$-Lipschitz as the pointwise supremum of a family of $1$-Lipschitz functions.
\end{proof}
\begin{proposition} \label{p.bar varphi is an ext}
    Assume that $\varphi : \mcl P^\upa_\xi \to \R$ is $1$-Lipschitz and pre-concave, then the function $\bar \varphi$ defined by \eqref{e.def ext} is an extension of $\varphi$.
 \end{proposition}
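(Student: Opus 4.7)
The strategy is to prove the two inequalities $\bar\varphi(\mu) \ge \varphi(\mu)$ and $\bar\varphi(\mu) \le \varphi(\mu)$ separately for $\mu \in \mcl P^\upa_\xi$. The direction $\bar\varphi(\mu) \ge \varphi(\mu)$ is immediate by taking $\eta = \delta_\mu$ in the supremum defining $\bar\varphi(\mu)$: this yields $\int \varphi \, \d\eta = \varphi(\mu)$ and $d(\text{Bar}(\eta), \mu) = 0$.

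For the reverse inequality $\bar\varphi(\mu) \le \varphi(\mu)$, my plan is to ``project'' an arbitrary competitor $\eta \in \mcl P(\mcl P^\upa_\xi)$ onto one whose barycenter equals $\mu$, using the optimal Kantorovich coupling from $\text{Bar}(\eta)$ to $\mu$. Specifically, I would fix $\eta \in \mcl P(\mcl P^\upa_\xi)$, set $\sigma = \text{Bar}(\eta)$, pick $\pi \in \Pi(\sigma,\mu)$ optimal so that $d(\sigma,\mu) = \int |x-y|\,\d\pi(x,y)$, and disintegrate $\pi(\d x,\d y) = \sigma(\d x) K(x,\d y)$. For each $\nu \in \mcl P^\upa_\xi$, I would then define $K\nu \in \mcl P(\mcl K_\xi)$ by $K\nu(A) = \int K(x,A) \, \d\nu(x)$, and let $\eta' \in \mcl P(\mcl P(\mcl K_\xi))$ denote the pushforward of $\eta$ under $\nu \mapsto K\nu$. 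By Fubini, $\text{Bar}(\eta') = K\sigma = \mu \in \mcl P^\upa_\xi$.

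At this point the crucial step is to invoke the extremeness of $\mcl P^\upa_\xi$ in $\mcl P(\mcl K_\xi)$ (Proposition~\ref{p.Pupaxi is an extreme set }): Definition~\ref{d.extreme} forces $\eta' \in \mcl P(\mcl P^\upa_\xi)$, meaning $K\nu \in \mcl P^\upa_\xi$ for $\eta$-almost every $\nu$. Jensen's inequality (Proposition~\ref{p.fake_jensen}), applied to the pre-concave continuous bounded $\varphi$ at $\eta'$, then yields $\int \varphi(K\nu)\,\d\eta(\nu) \le \varphi(\mu)$. Combining this with the $1$-Lipschitz estimate $\varphi(\nu) - \varphi(K\nu) \le d(\nu,K\nu)$, the Kantorovich upper bound $d(\nu,K\nu) \le \int |x-y|\,\d(\nu \otimes K)(x,y)$, and Fubini with $\int \nu\,\d\eta(\nu) = \sigma$, one obtains $\int \varphi\,\d\eta \le \varphi(\mu) + d(\sigma,\mu)$. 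Taking the supremum over $\eta$ completes the argument.

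The main obstacle is justifying the chain of measurability claims underlying the disintegration and pushforward, and ensuring the extreme-set hypothesis is correctly applied. The essential insight that makes this work---and that a naive Jensen applied directly to $\eta$ cannot replicate when $\text{Bar}(\eta) \notin \mcl P^\upa_\xi$---is that extremeness of $\mcl P^\upa_\xi$ inside $\mcl P(\mcl K_\xi)$ is exactly what forces the projected measures $K\nu$ to remain in $\mcl P^\upa_\xi$ almost surely, so that both $\varphi(K\nu)$ makes sense and the pre-concavity of $\varphi$ can be harnessed via Proposition~\ref{p.fake_jensen}.
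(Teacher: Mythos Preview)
Your proposal is correct and follows essentially the same route as the paper's proof: both take an optimal coupling $\pi$ between $\text{Bar}(\eta)$ and $\mu$, disintegrate it into a kernel, push each $\nu$ forward through that kernel to obtain $\eta'$ with $\text{Bar}(\eta')=\mu$, invoke extremeness of $\mcl P^\upa_\xi$ (Proposition~\ref{p.Pupaxi is an extreme set }) to force $\eta'\in\mcl P(\mcl P^\upa_\xi)$, and then combine Jensen (Proposition~\ref{p.fake_jensen}) with the $1$-Lipschitz bound. The only cosmetic difference is that the paper packages the Lipschitz step through the distance $\msf d(\eta,\eta')$ on $\mcl P(\mcl P^\upa_\xi)$, whereas you bound $\int[\varphi(\nu)-\varphi(K\nu)]\,\d\eta(\nu)$ directly via $d(\nu,K\nu)\le\int|x-y|\,\nu(\d x)K(x,\d y)$ and Fubini; these are equivalent.
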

\begin{proof}
    We fix $\mu' \in \mcl P^\upa_\xi$ and our goal is to show that $\Bar{\varphi}(\mu') = \varphi(\mu')$. Choosing $\eta = \delta_{\mu'}$ in \eqref{e.def ext}, we obtain $\Bar{\varphi}(\mu') \geq \varphi(\mu')$, so we only need to show the other bound.

    \noindent Step 1. We show that for every $\eta \in \mcl P(\mcl P^\upa_\xi)$, there exists $\eta' \in \mcl P(\mcl P^\upa_\xi)$ such that $\text{Bar}(\eta') = \mu'$ and $\msf d(\eta,\eta') \leq d(\text{Bar}(\eta),\mu')$.

    \noindent Let $\pi$ be an optimal coupling between $\text{Bar}(\eta)$ and $\mu'$, the existence of such a $\pi$ is guaranteed by \cite[Theorem~4.1]{villani2}. According to \cite[Theorem~5.3.1]{AGS}, there exists a family of probability measures $(\mu_x)_{x \in \mcl K_\xi}$ such that,
    %
    \begin{equation*}
        \int_{\mcl K_\xi \times \mcl K_\xi} h(x,y) \d\pi(x,y) = \int_{\mcl K_\xi} \int_{\mcl K_\xi} h(x,y) \d\mu_x(y) \d\text{Bar}(\eta)(x).
    \end{equation*}
   Given a random probability measure $\nu \in \mcl P^\upa_\xi$ sampled from $\eta$, we define $\nu' \in \mcl P(\mcl K_\xi)$ as the unique probability measure satisfying 
   \begin{equation*}
      \int_{\mcl K_\xi} h(y) \d\nu'(y) = \int_{\mcl K_\xi} \int_{\mcl K_\xi} h(y) \d\mu_x(y) \d\nu(x).
   \end{equation*}
   We let $\eta' \in \mcl P(\mcl P(\mcl K_\xi))$ be the law of the random variable $\nu' \in \mcl P(\mcl K_\xi)$. We have,
   \begin{align*}
     \int_{\mcl K_\xi} h(y) \d \text{Bar}(\eta')(y) &= \int_{\mcl P^\upa_\xi} \int_{\mcl K_\xi}h(y) \d\nu'(y) \d \eta(\nu) \\
                                               &= \int_{\mcl P^\upa_\xi}  \int_{\mcl K_\xi} \int_{\mcl K_\xi} h(y) \d\mu_x(y) \d\nu(x) \d \eta(\nu) \\
                                               &= \int_{\mcl K_\xi} \int_{\mcl K_\xi} h(y) \d\mu_x(y) \d\text{Bar}(\eta)(x) \\
                                               &= \int_{\mcl K_\xi \times \mcl K_\xi} h(y) \d\pi(x,y) \\
                                               &=  \int_{\mcl K_\xi} h(y) \d\mu'(y).
   \end{align*}
    Therefore, $\text{Bar}(\eta') = \mu' \in \mcl P^\upa_\xi$. According to Proposition~\ref{p.Pupaxi is an extreme set }, this imposes that $\eta'$ is supported on $\mcl P^\upa_\xi$. Finally, by definition of the optimal transport distance, we have,
    \begin{align*}
        \msf d(\eta,\eta') &\leq \int_{\mcl P^\upa_\xi } \d(\nu,\nu') \d\eta(\nu) \\
                      &\leq \int_{\mcl P^\upa_\xi} \int_{\mcl K_\xi} \int_{\mcl K_\xi} \d(x,y) \d\mu_x(y) \d\nu(x) \d\eta(\nu) \\
                      &=    \int_{\mcl K_\xi} \int_{\mcl K_\xi} d(x,y) \d\mu_x(y) \d\text{Bar}(\eta)(x) \\
                      &=    \int_{\mcl K_\xi} \int_{\mcl K_\xi} d(x,y) \d\pi(x,y) \\
                      &=    d(\text{Bar}(\eta), \mu').
    \end{align*}
    This concludes Step 1.
    
    \noindent Step 2. We show that $\Bar{\varphi}(\mu') \leq \varphi(\mu')$.

    \noindent Let $\eta \in \mcl P(\mcl P^\upa_\xi)$ and let $\eta' \in \mcl P(\mcl P^\upa_\xi)$ be as built in Step 1. We have,
    \begin{equation*}
        \int \varphi \d \eta - d ( \text{Bar}(\eta), \mu') = \int \varphi \d \eta' + \int \varphi \d(\eta - \eta')  - d (\text{Bar}(\eta), \mu').
    \end{equation*}
    By Proposition \ref{p.fake_jensen}, $\int \varphi \d \eta' \leq \varphi(\mu')$. Since $\varphi$ is $1$-Lipschitz, according to \eqref{e.double OT distance dual}, we have
    \begin{e*}
        \int \varphi \d(\eta - \eta') \leq \msf d(\eta,\eta').
    \end{e*}
    In Step 1, we have built $\eta'$ so that $- d ( \text{Bar}(\eta), \mu') \leq -\msf d(\eta,\eta')$, therefore, it follows from the two previous displays that
    \begin{equation*}
        \int \varphi \d \eta - d ( \text{Bar}(\eta), \mu') \leq \varphi(\mu').
    \end{equation*}
    By taking the supremum over $\eta \in \mcl P(\mcl P^\upa_\xi)$ on the left-hand side, we can conclude.
\end{proof}

\begin{proposition} \label{p.dual of ext}
    Let $\varphi : \mcl P^\upa_\xi \to \R$, the function $\bar \varphi$ defined in \eqref{e.def ext} satisfies for every $\chi_0 \in \mcl L_{\leq 1}(\mcl K_\xi)$, $\bar \varphi_*(\chi_0) = \varphi^\xi_*(\chi_0)$.
\end{proposition}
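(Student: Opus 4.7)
The plan is to prove both inequalities separately. The easier direction is $\bar\varphi_*(\chi_0) \leq \varphi^\xi_*(\chi_0)$: by Proposition~\ref{p.bar varphi is an ext}, $\bar\varphi$ agrees with $\varphi$ on $\mcl P^\upa_\xi$, and since $\mcl P^\upa_\xi \subset \mcl M_1(\mcl K_\xi)$, restricting the infimum in the definition of $\bar\varphi_*$ to $\mcl P^\upa_\xi$ immediately yields $\bar\varphi_*(\chi_0) \leq \inf_{\mu \in \mcl P^\upa_\xi} \{\int \chi_0 \d\mu - \varphi(\mu)\} = \varphi^\xi_*(\chi_0)$.

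The reverse inequality $\bar\varphi_*(\chi_0) \geq \varphi^\xi_*(\chi_0)$ is the substantive part. For arbitrary $\mu' \in \mcl M_1(\mcl K_\xi)$ and $\eta \in \mcl P(\mcl P^\upa_\xi)$, the key move is to use the $1$-Lipschitz property of $\chi_0$ together with the duality representation of $d$ to write
\begin{equation*}
    \int \chi_0 \d\mu' + d(\mu', \text{Bar}(\eta)) \geq \int \chi_0 \d\text{Bar}(\eta) = \int_{\mcl P^\upa_\xi} \int \chi_0 \d\nu \, \d\eta(\nu),
\end{equation*}
where the last equality uses the defining property of the barycenter. Rearranging yields
\begin{equation*}
    \int \chi_0 \d\mu' - \left( \int \varphi \d\eta - d(\text{Bar}(\eta), \mu') \right) \geq \int_{\mcl P^\upa_\xi} \left( \int \chi_0 \d\nu - \varphi(\nu) \right) \d\eta(\nu) \geq \varphi^\xi_*(\chi_0).
\end{equation*}
Taking the supremum over $\eta$ on the left-hand side and using the definition of $\bar\varphi(\mu')$ gives $\int \chi_0 \d\mu' - \bar\varphi(\mu') \geq \varphi^\xi_*(\chi_0)$, and a final infimum over $\mu' \in \mcl M_1(\mcl K_\xi)$ yields the desired inequality.

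I do not expect a serious obstacle here: the statement is essentially a formal consequence of the way $\bar\varphi$ is built in \eqref{e.def ext}, namely as an infimal convolution between $\varphi$ lifted to $\mcl P(\mcl P^\upa_\xi)$ and the Kantorovich-Rubinstein norm. The only point requiring any care is to verify that the 1-Lipschitz hypothesis on $\chi_0$ is exactly what is needed to absorb the penalty $d(\mu', \text{Bar}(\eta))$ appearing inside $\bar\varphi$, which is precisely why the infimum in \eqref{e.bidual ext} can be restricted to $\mcl L_{\leq 1}(\mcl K_\xi)$ without losing anything.
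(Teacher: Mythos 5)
Your proof is correct and rests on the same key observation as the paper's: since $\chi_0 \in \mcl L_{\leq 1}(\mcl K_\xi)$, the functional $\mu \mapsto \int \chi_0 \d\mu$ is $1$-Lipschitz for $d$, which allows one to absorb the penalty $d(\mathrm{Bar}(\eta),\mu')$ inside the definition of $\bar\varphi$. The paper carries this out as a single chain of equalities: it unfolds the definition of $\bar\varphi_*$, swaps the two infima, computes $\inf_{\mu}\{\int \chi_0 \d\mu + d(\mathrm{Bar}(\eta),\mu)\} = \int \chi_0 \d\,\mathrm{Bar}(\eta)$, and then reduces the remaining infimum over $\eta \in \mcl P(\mcl P^\upa_\xi)$ to an infimum over Diracs. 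You split the argument into two inequalities, but the substantive direction $\bar\varphi_*(\chi_0) \geq \varphi^\xi_*(\chi_0)$ is the paper's computation unwound, so this is essentially the same route. One remark: for the easy direction you invoke Proposition~\ref{p.bar varphi is an ext}, which is the difficult result that $\bar\varphi = \varphi$ on $\mcl P^\upa_\xi$ and which requires $\varphi$ to be $1$-Lipschitz and pre-concave, hypotheses not present in the statement of the present proposition. This is more than you need: taking $\eta = \delta_\nu$ in \eqref{e.def ext} gives the trivial one-sided bound $\bar\varphi(\nu) \geq \varphi(\nu)$ for $\nu \in \mcl P^\upa_\xi$, and that already yields $\int \chi_0 \d\nu - \bar\varphi(\nu) \leq \int \chi_0 \d\nu - \varphi(\nu)$, hence $\bar\varphi_*(\chi_0) \leq \varphi^\xi_*(\chi_0)$. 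Doing it this way keeps the proposition free of any assumptions on $\varphi$, matching the paper's dependency structure in which Proposition~\ref{p.dual of ext} and Proposition~\ref{p.bar varphi is an ext} are established independently before being combined in Theorem~\ref{t.extension}.
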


\begin{proof}
    Observe that when $\chi_0 \in \mcl L_{\leq 1}(\mcl K_\xi)$, the map $\mu \mapsto \int \chi_0 \d\mu$ is $1$-Lipschitz on $\mcl M_1(\mcl K_\xi)$ with respect to the distance $d$. In particular, for every $\nu \in \mcl P^\upa_\xi$, we have  
    \begin{equation*}
        \inf_{\mu \in \mcl M_1(\mcl K_\xi)} \left\{ \int \chi_0 \d\mu + d(\nu,\mu) \right\} = \int \chi_0 \d\nu.
    \end{equation*}
    Thus,
    \begin{align*}
        \bar \varphi_*(\chi_0) &= \inf_{\mu \in \mcl M_1(\mcl K_\xi)} \left\{ \int \chi_0 \d\mu - \bar \varphi(\mu) \right\} \\
                               &= \inf_{\mu \in \mcl M_1(\mcl K_\xi)} \inf_{\eta \in \mcl P(\mcl P^\upa_\xi)} \left\{ \int \chi_0 \d\mu - \int \varphi \d \eta + d(\text{Bar}(\eta),\mu)\right\} \\
                               &= \inf_{\eta \in \mcl P(\mcl P^\upa_\xi)} \left\{- \int \varphi \d \eta + \inf_{\mu \in \mcl M_1(\mcl K_\xi)}  \left\{ \int \chi_0 \d\mu  + d(\text{Bar}(\eta),\mu)\right\} \right\}\\
                               &= \inf_{\eta \in \mcl P(\mcl P^\upa_\xi)} \left\{- \int \varphi \d \eta + \int \chi_0 \d \text{Bar}(\eta) \right\} \\
                               &= \inf_{\eta \in \mcl P(\mcl P^\upa_\xi)} \left\{\int \left( -\varphi(\nu) + \int \chi_0 \d\nu \right) \d \eta(\nu) \right\} \\
                               &= \inf_{\nu \in \mcl \mcl P^\upa_\xi} \left\{ -\varphi(\nu) + \int \chi_0 \d\nu \right\} \\
                               &= \varphi^\xi_*(\chi_0).                           
    \end{align*}    
\end{proof}

\begin{proof}[Proof of Theorem \ref{t.extension}]
    The function $\overline \varphi$ is $1$-Lipschitz and concave on the set\\ $\mcl M_1(\mcl K_\xi)$ according to Proposition~\ref{p.ext is lip and concave}. It follows from Corollary~\ref{c.fenchel moreau for lipschitz}, that for every $\mu' \in \mcl M_1(\mcl K_\xi)$, 
    \begin{e*}
        \overline{\varphi}(\mu') =  \min_{\chi \in \mcl L_{\leq 1}(\mcl K_\xi) } \left\{ \int \chi \d\mu - \overline{\varphi}_*(\chi) \right\}.
    \end{e*}
    According to Proposition~\ref{p.dual of ext}, we have for every $\chi \in \mcl L_{\leq 1}(\mcl K_\xi)$, $\overline{\varphi}_*(\chi) = {\varphi}^\xi_*(\chi)$, thus
    \begin{e*}
        \overline{\varphi}(\mu') =  \min_{\chi \in \mcl L_{\leq 1}(\mcl K_\xi) } \left\{ \int \chi \d\mu - {\varphi}^\xi_*(\chi) \right\}.
    \end{e*}
    Finally, we have shown in Proposition~\ref{p.bar varphi is an ext} that $\bar \varphi$ is an extension of $\varphi$.
\end{proof}

\section{Vector models}  \label{s.vector models}
We now adapt the arguments of the proof of Theorem~\ref{t.main scalar} to prove Theorem~\ref{t.main}. To do so we will use Theorem~\ref{t.extension}, which is proven to be valid only when $\xi$ is strictly convex on $S^D_+$. Therefore, the proof we give is only valid when $\xi$ is assumed to be strictly convex on $S^D_+$. We will fix this issue in  Section~\ref{s.convex} using a continuity argument. 
\subsection{Strictly convex models} \label{ss.strictly convex}

Let $\underline \xi = \xi + I_{B(0,1) \cap S^D_+}$ denote the function that coincides with $\xi$ on $B(0,1) \cap S^D_+$ and is equal to $+\infty$ outside of $B(0,1) \cap S^D_+$. We define
\begin{e*}
    \theta(x) = x \cdot \nabla \xi(x) - \xi(x).
\end{e*}
\begin{proposition} \label{p. expression theta}
    The function $\underline \xi ^* : S^D_+ \to \R$ is $1$-Lipschitz and for every $x \in B(0,1)\cap S^D_+$, we have 
    \begin{equation*}
        \theta(x) = \underline \xi^*(\nabla \xi(x)).
    \end{equation*}
\end{proposition}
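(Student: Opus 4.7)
The plan is to dispatch the two claims separately, using only the variational definition of $\underline \xi^*$ and the first-order characterization of convexity of $\xi$ on $S^D_+$.

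For the Lipschitz bound I would rewrite
\begin{equation*}
    \underline \xi^*(a) = \sup_{b \in B(0,1) \cap S^D_+} \left\{ a \cdot b - \xi(b) \right\},
\end{equation*}
which presents $\underline \xi^*$ as a pointwise supremum of affine functions $a \mapsto a \cdot b - \xi(b)$ whose slopes $b$ satisfy $|b| \le 1$. Each such affine function is $1$-Lipschitz in $a$ with respect to the Frobenius norm, so the supremum is $1$-Lipschitz on $\R^{D \times D}$ and in particular on $S^D_+$.

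For the identity, since $\xi \in C^\infty(\R^{D \times D};\R)$ and $\xi$ is convex on the convex set $S^D_+$, for every $x, b \in S^D_+$ the function $t \mapsto \xi(x + t(b-x))$ is convex on $[0,1]$. Differentiating the inequality $\xi(x+t(b-x)) \le \xi(x) + t(\xi(b) - \xi(x))$ at $t = 0$ gives the usual first-order characterization
\begin{equation*}
    \xi(b) \ge \xi(x) + \nabla \xi(x) \cdot (b-x) \qquad \text{for all } b, x \in S^D_+.
\end{equation*}
Rearranging yields $\nabla \xi(x) \cdot b - \xi(b) \le \nabla \xi(x) \cdot x - \xi(x) = \theta(x)$, a fortiori for every $b \in B(0,1) \cap S^D_+$. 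Taking the supremum over such $b$ gives $\underline \xi^*(\nabla \xi(x)) \le \theta(x)$. For the reverse inequality, I would specialize the supremum at $b = x$, which belongs to $B(0,1) \cap S^D_+$ by hypothesis, to obtain $\underline \xi^*(\nabla \xi(x)) \ge \nabla \xi(x) \cdot x - \xi(x) = \theta(x)$.

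There is no substantive obstacle: the only delicate point is to note that the gradient inequality is valid on all of $S^D_+$ (including boundary points), which follows from the one-sided derivative argument above together with the fact that $\nabla \xi$ is defined globally on $\R^{D \times D}$ by smoothness of $\xi$.
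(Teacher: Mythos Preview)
Your proof is correct and follows essentially the same approach as the paper: both arguments obtain the Lipschitz bound by writing $\underline\xi^*$ as a supremum of $1$-Lipschitz affine functions, and both derive the identity from the first-order convexity inequality $\nabla\xi(x)\cdot(b-x)\le\xi(b)-\xi(x)$ (which the paper obtains via the same difference-quotient limit you invoke), together with the observation that the supremum defining $\underline\xi^*(\nabla\xi(x))$ is achieved at $b=x$.
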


\begin{proof} 
    For every $y \in \R^{D \times D}$, we have 
    \begin{equation*}
        \underline \xi ^*(y) = \sup_{ x \in B(0,1) \cap S^D_+} \left\{ x \cdot y - \xi(x) \right\}.
    \end{equation*}
    Therefore, $\underline \xi ^*$ is $1$-Lipschitz as the supremum of a family of $1$-Lipschitz functions. 
    
    Let $x \in B(0,1) \cap S^D_+$, from the definition of $\theta$, it is clear that $\theta(x) \leq \xi^*(\nabla \xi(x))$. Given $x' \in S^D_+$, by convexity of $\xi$ on $S^D_+$, we have for every $\lambda \in (0,1]$,
    \begin{e*}
        \frac{\xi(\lambda x' +(1-\lambda)x) - \xi(x)}{\lambda} \leq \xi(x') -\xi(x).
    \end{e*}
    Letting $\lambda \to 0$ in the previous display, we obtain 
    \begin{e*}
        x \cdot \nabla \xi(x) - \xi(x) \geq x' \cdot \nabla \xi(x) - \xi(x').
    \end{e*}
    This proves that,
    \begin{e*}
        \theta(x) = \sup_{x' \in S^D_+ } \left\{ x' \cdot \nabla \xi(x) - \xi(x') \right\} = \xi^*( \nabla \xi(x)).
    \end{e*}
    In particular, the supremum in the previous display is reached at $x' = x \in S^D_+ \cap B(0,1)$ and we have,
    \begin{equation*}
        \theta(x) =  \sup_{x' \in S^D_+ \cap B(0,1) } \left\{ x' \cdot \nabla \xi(x) - \xi(x') \right\} = \underline \xi ^*(\nabla \xi(x)). \qedhere
    \end{equation*}
\end{proof}

Recall that for every Lipschitz function $\chi : S^D_+ \to \R$, we have defined for every $x \in S^D_+$,
\begin{e*}
    \tilde S_t \chi(x) = \sup_{y \in S^D_+ \cap B(0,1)} \left\{ \chi(x + t\nabla \xi(y)) - t \xi^* \left(\nabla \xi(y) \right) \right\}.
\end{e*}
\begin{theorem} \label{t.main strictly convex}
    Assume that $\xi$ is strictly convex on $S^D_+$, then Theorem~\ref{t.main} holds.
\end{theorem}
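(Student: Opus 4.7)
The plan is to mirror the argument of Theorem~\ref{t.main scalar}, replacing Lemma~\ref{l.fenchel moreau} by the concave extension provided by Theorem~\ref{t.extension} in order to cope with the non-convexity of $\mcl P^\upa(S^D_+)$ when $D > 1$. Starting from Theorem~\ref{t.limit.free.energy} at $\mu = \delta_0$,
\begin{e*}
    f(t,\delta_0) = \sup_{\nu \in \mcl P^\upa_\infty(S^D_+)} \Bigl\{ \psi(\nu) - t\, \E\bigl[\xi^*(X_\nu/t)\bigr] \Bigr\},
\end{e*}
the first step is to reduce the supremum to $\nu$ of the form $\nu = \mathrm{Law}(t \nabla\xi(Y))$ with $Y$ a monotone random variable valued in $S^D_+ \cap B(0,1)$, i.e.\ $\nu \in t \mcl P^\upa_\xi$. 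On this restricted class, Proposition~\ref{p. expression theta} gives $\xi^*(X_\nu/t) = \theta(Y) = \underline{\xi}^*(X_\nu/t)$, which is bounded and continuous. The reduction exploits the strict convexity of $\xi$ (injectivity of $\nabla\xi$ on $S^D_+$, Proposition~\ref{p.injective_gradient}), the monotonicity of $\psi$ (\cite[Proposition~3.6]{chenmourrat2023cavity}), and a coupling/truncation argument that replaces any near-optimal $\nu$ by an element of $t \mcl P^\upa_\xi$ without decreasing the objective.

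Next, the restriction of $\psi$ to $t \mcl P^\upa_\xi$ is $1$-Lipschitz (\cite[Corollary~5.2]{chenmourrat2023cavity}) and pre-concave (inherited from the concavity of $\psi$ on $\mcl P^\upa_2(S^D_+)$ proved in \cite{chen2023parisipdevector}). Applying Theorem~\ref{t.extension} with $t \mcl K_\xi$ in place of $\mcl K_\xi$ produces a concave $1$-Lipschitz extension of $\psi|_{t \mcl P^\upa_\xi}$ to $\mcl M_1(t \mcl K_\xi)$, together with the bidual formula
\begin{e*}
    \psi(\nu) = \min_{\chi \in \mcl L_{\leq 1}(t \mcl K_\xi)} \Bigl\{ \int \chi \, \d \nu - \psi^\xi_*(\chi) \Bigr\} \quad \text{for } \nu \in t \mcl P^\upa_\xi.
\end{e*}
Each such $\chi$ extends to a $1$-Lipschitz function on $S^D_+$ by inf-convolution, as in the remark following Theorem~\ref{t.extension}, so the infimum can equivalently range over all $1$-Lipschitz functions $\chi : S^D_+ \to \R$, matching the statement of Theorem~\ref{t.main}.

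Substituting this bidual representation into the reduced Parisi supremum yields a sup-inf which I rearrange via Sion's theorem \cite{sion1958minimax}. Because the integrand in the outer sup is linear in $\nu$ and the extreme points of $\mcl P(t \mcl K_\xi)$—the Dirac masses—lie in $\mcl P^\upa_\xi$ by Proposition~\ref{p.Pupaxi is an extreme set }, Bauer's principle lets me enlarge the supremum from the non-convex set $t \mcl P^\upa_\xi$ to the convex compact set $\mcl P(t \mcl K_\xi)$ without changing its value. Combined with compactness of $\mcl L_{\leq 1}(t \mcl K_\xi)$ under local uniform convergence (Arzelà–Ascoli), Sion's theorem applies and swaps sup and inf. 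The resulting inner problem is a linear supremum over $\mcl P(t \mcl K_\xi)$, attained at a Dirac $\delta_x$ with $x \in t \mcl K_\xi$; writing $x = t\nabla\xi(y)$ with $y \in S^D_+ \cap B(0,1)$ and invoking Proposition~\ref{p. expression theta} identifies this inner sup with $\tilde S_t\chi(0)$, which completes the identification with the right-hand side of \eqref{e.main}.

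The hardest step is the first: justifying the reduction of the Parisi supremum to $t\mcl P^\upa_\xi$. In contrast to the scalar case, where $\mcl P(\R_+)$ is already convex and the supremum ranges freely over all monotone measures, here one must use strict convexity of $\xi$ together with the monotonicity and Lipschitz continuity of $\psi$ to truncate measures whose support escapes $t\mcl K_\xi$. A secondary technical point is the careful tracking of the $t$-dependent scaling when instantiating the abstract extension machinery of Section~\ref{s.extension} with $t\mcl K_\xi$ in place of $\mcl K_\xi$.
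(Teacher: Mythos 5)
Your overall blueprint (reduce to $\mcl P^\upa_\xi$, invoke the extension theorem, apply Sion, compute the inner supremum) matches the paper's, but two steps are problematic.

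First, the reduction of the Parisi supremum to measures of the form $\nabla\xi_\#\mu$ with $\mu$ supported in $B(0,1)\cap S^D_+$ is a known result and the paper simply cites \cite[Corollary~8.7~(2)]{chenmourrat2023cavity}. Your sketch of a ``coupling/truncation argument'' gestures at the right intuition but does not actually carry it out, and this reduction is not a small technicality: it encapsulates a non-trivial localization of the Parisi maximizer. You can avoid re-proving it by citing the same corollary, after which a rescaling $\xi \leadsto t\xi$ lets you take $t = 1$ and removes the need to carry the $t$-dependent sets $t\mcl K_\xi$ and $t\mcl P^\upa_\xi$ around.

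Second, and more seriously, the enlargement from $\mcl P^\upa_\xi$ to $\mcl P(\mcl K_\xi)$ is justified in your write-up by Bauer's principle applied to ``the integrand in the outer sup,'' which you claim is linear in $\nu$. It is not: after substituting the bidual representation, the outer supremand is $\nu \mapsto \inf_{\chi}\{\int\chi\,\d\nu - \psi^\xi_*(\chi)\} - \int\underline\xi^*\d\nu$, which is an infimum of affine maps and hence \emph{concave}. Bauer's maximum principle locates the maximum of a \emph{convex} u.s.c.\ function at an extreme point of a compact convex set; for a concave function it gives nothing. If instead you try to rescue the argument by performing the Sion interchange first (so that the supremand becomes affine in $\nu$), you cannot, because Sion requires both domains to be convex, and $\mcl P^\upa_\xi$ is not. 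So the enlargement-then-Sion order is forced, and your justification for the enlargement is circular. The paper closes this gap by a direct computation (its Step 1): using the defining representation $\bar\psi(\mu) = \sup_\eta\{\int\psi\,\d\eta - d(\mathrm{Bar}(\eta),\mu)\}$, the fact that $\underline\xi^*$ is $1$-Lipschitz (Proposition~\ref{p. expression theta}), and the Kantorovich--Rubinstein duality $\int\underline\xi^*\d(\mathrm{Bar}(\eta)-\mu) \le d(\mathrm{Bar}(\eta),\mu)$ to show that for every $\mu\in\mcl P(\mcl K_\xi)$ one has $\bar\psi(\mu) - \int\underline\xi^*\d\mu \le \sup_{\nu\in\mcl P^\upa_\xi}\{\psi(\nu)-\int\underline\xi^*\d\nu\}$. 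This is a genuinely different mechanism from Bauer's principle and is where the $1$-Lipschitz property of $\underline\xi^*$ and the specific form of the extension $\bar\psi$ do essential work.

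The remaining steps of your outline (Sion over the convex compact pair $(\mcl L^0_{\le 1}, \mcl P(\mcl K_\xi))$, evaluation of the linear inner supremum at Dirac masses, identification with $\tilde S_t\chi(0)$ via Proposition~\ref{p. expression theta}) are correct and match the paper.
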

\begin{proof}
    According to \cite[Corollary~8.7~(2)]{chenmourrat2023cavity}, the following refinement of \eqref{e.lim.FN} holds,
    \begin{e*}
        \lim_{N \to +\infty} \bar F_N(t) = \sup_{\mu \in \mcl P_\xi^\upa} \left\{ \psi(\mu)  - t \int \xi^* \left( \frac{\cdot}{t} \right) \d\mu\right\}.
    \end{e*}
    We have, $(t\xi)^* = t \xi^* \left( \frac{\cdot}{t}\right)$ so up to replacing $\xi$ by $t\xi$, we may assume without loss of generality that $t  = 1$.
    
    \noindent Step 1. We show that
    \begin{e*}
      \lim_{N \to +\infty} \bar F_N(1) = \sup_{\mu \in \mcl P(\mcl K_\xi)} \left\{ \bar \psi(\mu) - \int \underline \xi^* \d\mu \right\}.  
    \end{e*}

    \noindent We have
    \begin{equation*}
        \lim_{N \to +\infty} \bar F_N(1) = \sup_{ \mu \in \mcl P^\upa(B(0,1) \cap S^D_+)} \left\{ \psi( \nabla \xi_\# \mu ) - \int\theta \d\mu \right\}.
    \end{equation*}
    According Proposition~\ref{p. expression theta}, we have $\int\theta(x) \d\mu(x) = \int \underline \xi^*(\nabla \xi(x)) \d\mu(x)$, replacing $\mu$ by $\nabla \xi_\# \mu$, we obtain 
    \begin{equation*}
        \lim_{N \to +\infty} \bar F_N(1) = \sup_{ \mu \in \mcl P_\xi^\upa} \left\{ \psi( \mu ) - \int \underline \xi ^* \d\mu \right\}.
    \end{equation*}
    In addition, it follows from Theorem~\ref{t.extension} that $\bar \psi$ is an extension of $\psi$, therefore the previous display implies
    \begin{equation}
       \lim_{N \to +\infty} \bar F_N(1) \leq \sup_{\mu \in \mcl P(\mcl K_\xi)} \left\{ \bar \psi(\mu) - \int \underline \xi^* \d\mu \right\}.
    \end{equation}
    Conversely, let us fix $\mu \in \mcl P(\mcl K_\xi)$, for every $\varepsilon > 0$, there exists $\eta \in \mcl P(\mcl P^\upa_\xi)$, such that
    \begin{equation*}
        \bar \psi(\mu) \leq \varepsilon + \int \psi \d\eta -d(\text{Bar}(\eta),\mu).
    \end{equation*}
    Since $\underline \xi^*$ is $1$-Lipschitz according to Proposition~\ref{p. expression theta} by definition of the distance $d$, it follows that,
    \begin{align*}
        \bar \psi(\mu) - \int \underline \xi^* \d\mu &\leq \varepsilon + \int \psi \d\eta  - \int \underline \xi^* \d\mu -d(\text{Bar}(\eta),\mu) \\
        &\leq \varepsilon + \int \left( \psi(\nu) - \int \underline \xi^* \d\nu \right) \d\eta(\nu) 
        \\&\qquad  \qquad  \qquad  + \int \underline \xi^* \d( \text{Bar}(\eta) - \mu) - d(\text{Bar}(\eta),\mu) \\
        &\leq \varepsilon + \int \left( \psi(\nu) - \int \underline \xi^* \d\nu \right) \d\eta(\nu) \\
        &\leq \varepsilon + \sup_{\nu \in \mcl P_\xi^\upa} \left\{ \psi(\nu) - \int \underline \xi^* \d\nu \right\} \\
        &=\varepsilon + \lim_{N \to +\infty } \bar F_N(1).
    \end{align*}
    Taking the supremum over $\mu \in \mcl P(\mcl K_\xi)$, this yields
    \begin{equation*}
       \sup_{\mu \in \mcl P(\mcl K_\xi)} \left\{ \bar \psi(\mu) - \int \underline \xi^* \d\mu \right\} \leq \varepsilon + \lim_{N \to +\infty } \bar F_N(1).
    \end{equation*}
    Finally, since $\varepsilon > 0$ is arbitrary, we obtain the desired result by letting $\varepsilon \to 0$.

    \noindent Step 2. We show that 
    \begin{e*}
        \lim_{N \to +\infty} \bar F_N(1) = \inf_{\chi \in \mcl L^0_{\leq 1}} \sup_{\mu \in \mcl P(\mcl K_\xi)} \left\{ \int \chi \d\mu - \psi^\xi_*(\chi) - \int \underline \xi^* \d\mu \right\}.
    \end{e*}

    \noindent According to Theorem~\ref{t.extension}, we have for every $\mu \in \mcl P(\mcl K_\xi)$, 
    \begin{e*}
        \bar \psi(\mu) = \inf_{\chi \in \mcl L^0_{\leq 1}} \left\{\int \chi \d\mu - \psi^\xi_*(\chi) \right\}.
    \end{e*}
    It thus follows from Step 1 that
    \begin{equation*}
        \lim_{N \to +\infty} \bar F_N(1) =  \sup_{\mu \in \mcl P(\mcl K_\xi)} \inf_{\chi \in \mcl L^0_{\leq 1}}  \mcl G(\chi,\mu),
    \end{equation*}
    where,
    \begin{equation*}
        \mcl G(\chi,\mu) = \int \chi \d\mu - \psi^\xi_*(\chi) - \int \underline \xi^* \d\mu.
    \end{equation*}
    The set $\mcl P(\mcl K_\xi)$ is convex compact with respect to the topology of the distance $d$. For every $\chi \in \mcl L^0_{\leq 1}$ the map $\mcl G(\chi,\cdot)$ is concave and Lipschitz continuous with respect to $d$. For every $\mu \in \mcl P(\mcl K_\xi)$, the map $\mcl G(\cdot,\mu)$ is convex and lower semi continuous with respect to the topology of local uniform convergence. Therefore, according to \cite[Corollary~3.4]{sion1958minimax} we can perform a sup-inf interchange in the previous display to obtain,
    \begin{equation*}
        \lim_{N \to +\infty} \bar F_N(1) =  \inf_{\chi \in \mcl L^0_{\leq 1}} \sup_{\mu \in \mcl P(\mcl K_\xi)} \mcl G(\chi,\mu).
    \end{equation*}
    \noindent Step 3. Conclusion.
    
    \noindent Using Step 2, we have 
    \begin{align*}
        \lim_{N \to +\infty} \bar F_N(1) &= \inf_{\chi \in \mcl L^0_{\leq 1}} \left\{ - \psi^\xi_*(\chi) + \sup_{\mu \in \mcl P(\mcl K_\xi)} \left\{ \int \chi d\mu - \int \underline \xi^* \d\mu \right\} \right\}\\  
                    &= \inf_{\chi \in \mcl L^0_{\leq 1}} \left\{ - \psi^\xi_*(\chi) + \sup_{x \in B(0,1) \cap S^D_+} \left\{  \chi(\nabla \xi(x)) - \underline \xi^*(\nabla \xi(x)) \right\} \right\}\\ 
                    &= \inf_{\chi \in \mcl L^0_{\leq 1}} \left\{ \tilde S_t \chi(0) - \psi^\xi_*(\chi) \right\}.
        \end{align*}
\end{proof}

\subsection{Convex models} \label{s.convex}

In this subsection, we prove Theorem~\ref{t.main} when $\xi$ is only assumed to be convex but not necessarily strictly convex. To do so, we will use the fact that for every $\alpha > 0$, $x \mapsto \xi(x) + \alpha |x|^2$ is strictly convex. This allows us to establish \eqref{e.main} with $\alpha > 0$ using Theorem~\ref{t.main strictly convex}, and we can then let $\alpha \to 0$ to conclude. 

We start by defining a family of Gaussian processes $(H^\alpha_N)_{N \geq 1}$ with covariance function $\xi_\alpha(x) = \xi(x) + \alpha |x|^2$. For every $\sigma \in (\R^D)^N$, we let
\begin{e*}
    H_N^\text{Potts}(\sigma)  = \frac{1}{\sqrt{N}} \sum_{i,j = 1}^N J_{ij} \sigma_i \cdot \sigma_j,
\end{e*}
where the $J_{ij}$'s are independent standard Gaussian random variables. We choose $(J_{ij})_{i,j \geq 1}$ independent of $(H_N(\sigma))_{\sigma \in \R^{D \times N}}$. For every $\alpha \geq 0$, let
\begin{e*}
    H_N^\alpha(\sigma) = H_N(\sigma) + \sqrt{\alpha} H_N^{\text{Potts}}(\sigma).
\end{e*}
We have for every  $\sigma,\tau \in (\R^D)^N$,
\begin{e*}
    \E \left[ H_N^\alpha(\sigma) H_N^\alpha(\tau)\right] = N \xi_\alpha \left(\frac{\sigma \tau^\perp}{N} \right).
\end{e*}
We let $\bar F^\alpha_N(t)$ denote the free energy of $H_N^\alpha$, more precisely
\begin{e*}
    \bar F^\alpha_N(t) = -\frac{1}{N} \E \log \int \exp \left( \sqrt{2t}H^\alpha_N(\sigma) - Nt \xi_\alpha \left( \frac{\sigma \sigma^\perp}{N}\right)\right) \d P_N(\sigma).
\end{e*}
Note that at $\alpha = 0$, we have $\xi_0 = \xi$, $H_N^0 = H_N$ and $\bar F^0_N(t) = \bar F_N(t)$. We let $\langle \cdot \rangle_\alpha$ denote the Gibbs measure associated to $\bar F^\alpha_N(t)$, it is a random probability measure on $\R^{D \times N}$ defined by
\begin{e*}
    d\langle \cdot \rangle_\alpha (\sigma) \propto \exp \left( \sqrt{2t}H^\alpha_N(\sigma) - Nt \xi_\alpha \left( \frac{\sigma \sigma^\perp}{N}\right)\right) \d P_N(\sigma).
\end{e*}
\begin{proposition} \label{p.lip fe}
    For every $t \geq 0$, there exists a constant $C \geq 0$ such that for every $N \in \N$ and every $\alpha,\alpha' \geq 0$,
    \begin{equation*}
        |\bar F^\alpha_N(t) - \bar F^{\alpha'}_N(t)| \leq  C|\alpha-\alpha'|.
    \end{equation*}
\end{proposition}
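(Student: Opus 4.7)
The plan is to prove a stronger quantitative statement, namely that the map $\alpha \mapsto \bar F^\alpha_N(t)$ is differentiable on $(0,+\infty)$ with a derivative that is bounded uniformly in $N$ and $\alpha$; the stated Lipschitz bound will then follow by the fundamental theorem of calculus, with the boundary value at $\alpha = 0$ handled by continuity.

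First I would fix $\alpha > 0$ and differentiate under the expectation. Writing $\langle \cdot \rangle_\alpha$ for the Gibbs measure defining $\bar F^\alpha_N(t)$ and recalling that $\xi_\alpha(x) = \xi(x) + \alpha |x|^2$, one gets formally
\begin{e*}
    \frac{d}{d\alpha} \bar F^\alpha_N(t) = -\frac{1}{N} \E \left\langle \sqrt{\frac{t}{2\alpha}} H^{\mathrm{Potts}}_N(\sigma) - N t \left| \frac{\sigma\sigma^\tsp}{N}\right|^2 \right\rangle_\alpha.
\end{e*}
The field $H^{\mathrm{Potts}}_N$ is centered Gaussian with covariance $\E[H^{\mathrm{Potts}}_N(\sigma) H^{\mathrm{Potts}}_N(\tau)] = N |\sigma\tau^\tsp/N|^2$ and is independent of $H_N$; the standard Gaussian integration by parts (viewing $H^{\mathrm{Potts}}_N$ as a linear functional of the $J_{ij}$'s and treating the $H_N$-factors in the Gibbs weight as constants for that IBP) yields
\begin{e*}
    \E \langle H^{\mathrm{Potts}}_N(\sigma) \rangle_\alpha = \sqrt{2t\alpha}\, N\, \E \left\langle \left|\frac{\sigma \sigma^\tsp}{N}\right|^2 - \left|\frac{\sigma (\sigma')^\tsp}{N}\right|^2 \right\rangle_\alpha ,
\end{e*}
where $\sigma'$ denotes an independent replica of $\sigma$ under $\langle \cdot \rangle_\alpha$.

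Plugging this into the formula for the derivative, the factor $\sqrt{t/(2\alpha)}\cdot \sqrt{2t\alpha} = t$, the diagonal terms $\E \langle |\sigma\sigma^\tsp/N|^2\rangle_\alpha$ cancel, and one is left with the replica-off-diagonal expression
\begin{e*}
    \frac{d}{d\alpha} \bar F^\alpha_N(t) = t\, \E \left\langle \left|\frac{\sigma (\sigma')^\tsp}{N}\right|^2 \right\rangle_\alpha,
\end{e*}
which is in particular nonnegative and, crucially, no longer singular as $\alpha \to 0^+$. Since $P_1$ has compact support there exists $R > 0$ with $|\sigma_i| \leq R$ almost surely under $P_N$, hence each entry of $\sigma (\sigma')^\tsp/N$ is bounded by $R^2$ and $|\sigma(\sigma')^\tsp/N|^2 \leq D^2 R^4$ pointwise. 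This yields the uniform bound $\left| \frac{d}{d\alpha}\bar F^\alpha_N(t)\right| \leq t D^2 R^4$ for every $N$ and every $\alpha > 0$.

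Finally, integrating this bound between $\alpha$ and $\alpha'$ (assumed both positive, WLOG $\alpha \le \alpha'$) gives $|\bar F^\alpha_N(t) - \bar F^{\alpha'}_N(t)| \leq t D^2 R^4 |\alpha - \alpha'|$, so $C = t D^2 R^4$ works. The case where one of the parameters is zero is handled by passing to the limit: the continuity of $\alpha \mapsto \bar F^\alpha_N(t)$ at $0$ is elementary since $\sqrt{\alpha} H^{\mathrm{Potts}}_N(\sigma)$ and $\alpha |\sigma \sigma^\tsp/N|^2$ both vanish uniformly on $\supp P_N$ as $\alpha \to 0^+$ (use dominated convergence together with compactness of the support of $P_N$). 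The only mildly delicate point is the rigorous justification of differentiating under the expectation and then applying Gaussian IBP when $\alpha$ is close to $0$; one sidesteps this by working on $[\alpha_0, +\infty)$ for arbitrary $\alpha_0 > 0$ and then invoking the continuity statement just mentioned.
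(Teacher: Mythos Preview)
Your proof is correct and follows essentially the same route as the paper's: differentiate in $\alpha$, apply Gaussian integration by parts to the $H_N^{\mathrm{Potts}}$ term so that the $1/\sqrt{\alpha}$ singularity cancels and only the replica overlap term $t\,\E\langle |\sigma(\sigma')^\tsp/N|^2\rangle_\alpha$ survives, and then bound this uniformly using the compact support of $P_1$. The paper normalizes to $t=1/2$ and is terser about continuity at $\alpha=0$, but the argument is the same.
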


\begin{proof} 
    Without loss of generality, we may assume that $t = 1/2$.  The function $\alpha \mapsto \bar F_N^\alpha(1/2)$ is continuous on $\R_+$ and differentiable on $(0,+\infty)$, and we have 
   \begin{e*}
           \frac{\d}{\d\alpha }  F_N^\alpha(1/2) = -\frac{1}{N} \E \left\langle \frac{1}{2\sqrt{\alpha}}H_N^\text{Potts}(\sigma) - \frac{N}{2}\left|\frac{\sigma \sigma^\perp}{N}\right|^2 \right\rangle_\alpha.  
   \end{e*}
   Using the Gaussian integration by part formula \cite[Lemma~1.1]{pan}, it follows that,
   \begin{e*}
        \frac{\d}{\d\alpha }  F_N^\alpha(1/2) = \frac{1}{2} \E \left\langle \left| \frac{\sigma\tau^\perp}{N} \right|^2 \right \rangle_\alpha.
   \end{e*}
   Since the reference measure $P_1$ is compactly supported, there exists a constant $c \geq 0$ such that $\E \left\langle \cdot \right \rangle_\alpha$-almost surely $\left| \frac{\sigma\tau^\perp}{N} \right|^2 \leq D^2c^2$, the result follows.
\end{proof}

For every $\chi \in \mcl L^0_{\leq 1}$, we let
\begin{e*}
    \psi^\alpha_*(\chi) = \inf_{\mu \in \mcl P^\upa_{\xi_\alpha}} \left\{ \int \chi \d\mu - \psi(\mu) \right\},
\end{e*}
and
\begin{e*}
    \tilde S_t^\alpha \chi = \sup_{x \in B(0,1) \cap S^D_+} \left\{ \chi( t\nabla \xi_\alpha(x)) - t\theta_\alpha(x)\right\},
\end{e*}
where,
\begin{e*}
    \theta_\alpha(x) = x \cdot \nabla \xi_\alpha(x) - \xi_\alpha(x) = \theta(x) + \alpha |x|^2.
\end{e*}
\begin{proposition} \label{p.lipschitz hopf}
    For every $t \geq 0$, the function 
    \begin{e*}
        \alpha \mapsto \inf_{\chi \in \mcl L^0_{\leq 1}} \left\{\tilde S_t^\alpha \chi(0) -  \psi^\alpha_*(\chi)\right\},
    \end{e*}
    is Lipschitz on $\R_+$.
\end{proposition}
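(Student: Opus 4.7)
The plan is to show that for each fixed $\chi \in \mcl L^0_{\leq 1}$, both $\tilde S_t^\alpha \chi(0)$ and $\psi^\alpha_*(\chi)$ are individually Lipschitz in $\alpha$ with constants \emph{independent} of $\chi$. The infimum of a family of uniformly Lipschitz functions is then Lipschitz with the same constant, which gives the claim.

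First I would estimate $\tilde S_t^\alpha \chi(0)$. Writing $\nabla \xi_\alpha(x) = \nabla \xi(x) + 2\alpha x$, for $x \in B(0,1) \cap S^D_+$ we have $|\nabla \xi_\alpha(x) - \nabla \xi_{\alpha'}(x)| \leq 2|\alpha-\alpha'|$ and $|\theta_\alpha(x) - \theta_{\alpha'}(x)| = |\alpha-\alpha'||x|^2 \leq |\alpha-\alpha'|$. Since $\chi$ is $1$-Lipschitz, the integrand inside the supremum defining $\tilde S_t^\alpha \chi(0)$ changes by at most $3t|\alpha-\alpha'|$ as $\alpha$ varies, uniformly in $x$ and in $\chi$, and this pointwise bound transfers to the suprema, yielding $|\tilde S_t^\alpha \chi(0) - \tilde S_t^{\alpha'}\chi(0)| \leq 3t|\alpha-\alpha'|$.

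For $\psi^\alpha_*(\chi)$ the subtle point is that the constraint set $\mcl P^\upa_{\xi_\alpha}$ itself depends on $\alpha$. The key idea is to use the common parametrization
\begin{e*}
    \mu_{\alpha,\msf q} = \text{Law}(\nabla \xi_\alpha(\msf q(U))), \qquad \msf q \in \mcl Q(S^D_+) \cap L^\infty_{\leq 1},
\end{e*}
so that $\psi^\alpha_*(\chi) = \inf_{\msf q} \{ \int \chi \, \d\mu_{\alpha,\msf q} - \psi(\mu_{\alpha,\msf q}) \}$ becomes an infimum over $\msf q$ independent of $\alpha$, with the $\alpha$-dependence entirely in the integrand. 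For a fixed $\msf q$, the coupling of $\mu_{\alpha,\msf q}$ and $\mu_{\alpha',\msf q}$ through $U$ yields $d(\mu_{\alpha,\msf q}, \mu_{\alpha',\msf q}) \leq \E|\nabla \xi_\alpha(\msf q(U)) - \nabla \xi_{\alpha'}(\msf q(U))| \leq 2|\alpha-\alpha'|$, and since $\chi$ is $1$-Lipschitz and $\psi$ is $1$-Lipschitz with respect to $d$ (as was already used in the proof of Theorem~\ref{t.main scalar}),
\begin{e*}
    \left| \left( \int \chi \, \d\mu_{\alpha,\msf q} - \psi(\mu_{\alpha,\msf q}) \right) - \left( \int \chi \, \d\mu_{\alpha',\msf q} - \psi(\mu_{\alpha',\msf q}) \right) \right| \leq 4|\alpha-\alpha'|,
\end{e*}
uniformly in $\msf q$ and in $\chi$. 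Taking the infimum over $\msf q$ gives $|\psi^\alpha_*(\chi) - \psi^{\alpha'}_*(\chi)| \leq 4|\alpha-\alpha'|$.

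Combining the two estimates shows that $\alpha \mapsto \tilde S_t^\alpha \chi(0) - \psi^\alpha_*(\chi)$ is $(3t+4)$-Lipschitz uniformly in $\chi \in \mcl L^0_{\leq 1}$, and passing to the infimum over $\chi$ preserves this Lipschitz constant. I do not foresee any serious obstacle; the only delicate step is recognizing that although the sets $\mcl P^\upa_{\xi_\alpha}$ depend nontrivially on $\alpha$, they can all be parametrized by the same family of paths $\msf q$, which effectively transfers the $\alpha$-dependence from the constraint set into the integrand where the uniform Lipschitz estimate can be carried out.
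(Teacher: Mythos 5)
Your proposal is correct and follows essentially the same route as the paper's proof: both arguments show that $\alpha \mapsto \tilde S_t^\alpha\chi(0)$ is $3t$-Lipschitz and that $\alpha \mapsto \psi^\alpha_*(\chi)$ is $4$-Lipschitz, uniformly in $\chi \in \mcl L^0_{\leq 1}$, and conclude by noting that the infimum of a uniformly Lipschitz family is Lipschitz. The only cosmetic difference is that you parametrize $\mcl P^\upa_{\xi_\alpha}$ by paths $\msf q \in \mcl Q(S^D_+)\cap L^\infty_{\leq 1}$ while the paper pushes forward measures $\mu \in \mcl P^\upa(B(0,1)\cap S^D_+)$ by $\nabla\xi + 2\alpha\,\mathrm{id}$; these are equivalent parametrizations, and your transport-distance estimate via the synchronous coupling through $U$ is interchangeable with the paper's estimate via the dual (Kantorovich) formulation.
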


\begin{proof}
    If for every $\chi \in \mcl L^0_{\leq 1}$, the functions $\alpha \mapsto \tilde S_t^\alpha \chi(0)$ and $\alpha \mapsto  \psi^\alpha_*(\chi)$ are Lipschitz functions on $\R_+$ with Lipschitz constant independent of $\chi$, then the result follows because in this case $\alpha \mapsto \inf_{\chi \in \mcl L^0_{\leq 1}} \left\{\tilde S_t^\alpha \chi(0) -  \psi^\alpha_*(\chi)\right\}$ is the infimum of a family of uniformly Lipschitz functions.

    \noindent Step 1. We show that for every $\chi \in \mcl L^0_{\leq 1}$, $\alpha \mapsto \tilde S_t^\alpha \chi(0)$ is $3t-$Lipschitz.

    \noindent Let $x \in B(0,1) \cap S^D_+$, the function 
    \begin{e*}
        \alpha \mapsto \chi(t\nabla \xi(x) + 2t\alpha x) - t\theta(x) - t\alpha |x|^2
    \end{e*}
    is $3t$-Lipschitz on $\R_+$ and we have 
    \begin{e*}
        \tilde S_t^\alpha \chi = \sup_{x \in B(0,1)} \left\{\chi(t\nabla \xi(x) + 2t\alpha x) - t\theta(x) - t\alpha |x|^2 \right\}.
    \end{e*}
    Therefore, $\alpha \mapsto \tilde S_t^\alpha \chi(0)$ is $3t$-Lipschitz on $\R_+$ as the supremum of a family of $3t$-Lipschitz functions on $\R_+$.
    
    \noindent Step 2.  We show that for every $\chi \in \mcl L^0_{\leq 1}$, $\alpha \mapsto \psi_*^\alpha(\chi)$ is $4-$Lipschitz on $\R_+$.

    \noindent Let $\alpha,\alpha' \geq 0$, we have for every $\mu \in \mcl P^\upa(B(0,1) \cap S^D_+)$
    \begin{align*}
      \left| \int \chi(\nabla \xi(x) + 2 \alpha x) \d\mu(x) -  \int \chi(\nabla \xi(x) + 2 \alpha' x) \d\mu(x) \right| &\leq \int 2|\alpha - \alpha'||x| \d\mu \\
                                                                                                                     &\leq 2|\alpha - \alpha'|.
    \end{align*}
    In addition, since $\psi$ is $1$-Lipschitz with respect to the optimal transport distance $d$, we have 
    \begin{align*}
       &\left| \psi \left( (\nabla \xi +2 \alpha \text{id}_{S^D_+})_\# \mu \right) -  \psi \left( (\nabla \xi +2 \alpha' \text{id}_{S^D_+})_\# \mu \right) \right| \\
       &\leq d( (\nabla \xi +2 \alpha \text{id}_{S^D_+})_\# \mu, (\nabla \xi +2 \alpha' \text{id}_{S^D_+})_\# \mu) \\
       &\leq \sup_{\chi_0 \in \mcl L^0_{\leq 1} } \left\{ \int \chi_0(\nabla \xi(x) +2 \alpha x) \d\mu(x) - \int \chi_0(\nabla \xi(x) +2 \alpha' x) \d\mu(x) \right\} \\
       &\leq \sup_{\chi_0 \in \mcl L^0_{\leq 1} } \left\{ \int 2 |\alpha-\alpha'| |x| \d\mu(x) \right\} \\
       &\leq 2|\alpha-\alpha'|.
    \end{align*}
    Finally, since 
    \begin{e*}
        \psi^\alpha_*(\chi) = \inf_{\mu \in \mcl P^\upa(B(0,1) \cap S^D_+)} \left\{ \int \chi(\nabla \xi(x) +2\alpha x ) \d\mu(x) - \psi \left( (\nabla \xi +2 \alpha \text{id}_{S^D_+})_\# \mu \right) \right\},
    \end{e*}
    we deduce that $\alpha \mapsto \psi^\alpha_*(\chi)$ is $4$-Lipschitz as the infimum of a family of $4$-Lipschitz functions.
\end{proof}

\begin{proof}[Proof of Theorem~\ref{t.main}]
    Let $\alpha > 0$, $\xi_\alpha$ is strictly convex on $S^D_+$ so according to Theorem~\ref{t.main strictly convex}, we have  
    \begin{e*}
        \lim_{N \to +\infty} \bar F^\alpha_N(t) = \inf_{\chi \in \mcl L_{\leq 1}} \left\{ \tilde{S}^\alpha_t \chi(0) - \psi^\alpha_*(\chi) \right\}.
    \end{e*}
    According to Propositions~\ref{p.lip fe}~and~\ref{p.lipschitz hopf}, we can let $\alpha \to 0$ in the previous display to obtain 
    \begin{e*}
        \lim_{N \to +\infty} \bar F_N(t) = \inf_{\chi \in \mcl L_{\leq 1}} \left\{ \tilde{S}_t \chi(0) - \psi^\xi_*(\chi) \right\}.
    \end{e*}
\end{proof}

\section{Interpretations of the main results} \label{s.interpretation}

This last section has a more speculative flavor, its aim is to give an interpretation of the main results of this paper through the lens of Hamilton-Jacobi equations. We start by explaining why we think that \eqref{e.main scalar} can be interpreted as a Hopf-like formula. Assuming that this interpretation can be made rigorous in the case $D > 1$, we construct a conjectural variational formula for the limit free energy when $\xi$ is not assumed to be convex on $S^D_+$.
\subsection{Hopf and Hopf-like formulas} \label{ss.hopf}

Let $\psi : \R^n \to \R$ be a Lipschitz function and let $H : \R^n \to \R$ be a locally Lipschitz function. It is well known that the Hamilton-Jacobi equation 
\begin{e} \label{e.hj interp}
    \begin{cases}
        \partial_t u - H(\nabla u ) = 0 \textrm{ on } (0,+\infty) \times \R^n \\
        u(0,\cdot) = \psi
    \end{cases}
\end{e}
admits a unique viscosity solution $u$ and that when $\psi$ is concave, $u$ admits the Hopf representation \cite[Theorem~3.13]{HJbook}. We recall this in the next proposition.
\begin{proposition}[\cite{HJbook}] \label{p.hopf}
    Assume that $\psi : \R^n \to \R$ is Lipschitz and concave, assume that $H : \R^n \to \R$ is locally Lipschitz. Then, the unique viscosity solution of \eqref{e.hj interp} satisfies 
    \begin{e} \label{e.hopf}
    u(t,x) = \inf_{y \in \R^n} \left\{ x \cdot y - \psi_*(y) +t H(y) \right\},
    \end{e}
    where $\psi_*(y) = \inf_{x \in \R^n} \left\{ x \cdot y - \psi(y) \right\}$. 
\end{proposition}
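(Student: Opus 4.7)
The plan is to verify that the function $v(t,x)$ defined by the right-hand side of \eqref{e.hopf} coincides with the unique viscosity solution $u$ of \eqref{e.hj interp}. By the uniqueness theory for viscosity solutions with locally Lipschitz Hamiltonian and Lipschitz initial data, it suffices to check three properties: that $v(0,\cdot)=\psi$, that $v$ is a viscosity supersolution, and that $v$ is a viscosity subsolution. Setting $L = |\psi|_{\mathrm{Lip}}$, the concave conjugate satisfies $\psi_*(y)=-\infty$ whenever $|y|>L$, so the infimum in \eqref{e.hopf} reduces to an infimum over the compact set $\bar B(0,L)$. This makes $v$ finite-valued and jointly Lipschitz in $(t,x)$, and concave in $(t,x)$ as an infimum of a family of affine functions. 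At $t=0$, the concave Fenchel-Moreau theorem applied to the concave upper semi-continuous function $\psi$ yields $v(0,\cdot)=\psi$, matching the initial condition.

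For the supersolution property, I would note that for each $y\in\mathrm{dom}(\psi_*)$ the affine function $u_y(t,x):=x\cdot y - \psi_*(y) + tH(y)$ is a smooth classical solution of the Hamilton-Jacobi equation, since $\partial_t u_y = H(y) = H(\nabla u_y)$. The standard stability result that the pointwise infimum of a locally bounded family of viscosity supersolutions is itself a viscosity supersolution then yields that $v = \inf_y u_y$ is a supersolution of \eqref{e.hj interp}.

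The subsolution property is the main obstacle and is where concavity of $\psi$ enters essentially. Fix $(t_0,x_0)$ with $t_0>0$ and a smooth function $\phi$ such that $v-\phi$ attains a local maximum at $(t_0,x_0)$. Since $v$ is concave on $(0,+\infty)\times\R^n$, $\nabla\phi(t_0,x_0)$ lies in the superdifferential of $v$ at $(t_0,x_0)$; by the representation of $v$ as an infimum of affine maps, this superdifferential is the closed convex hull of the set $\{(H(y),y): y \text{ achieves the infimum in \eqref{e.hopf} at } (t_0,x_0)\}$. Combining this with a dynamic-programming argument along time slices, as carried out in \cite[Theorem~3.13]{HJbook}, one selects a minimizer $y_0$ for which $\nabla_x\phi(t_0,x_0) = y_0$ and $\partial_t\phi(t_0,x_0) = H(y_0)$, so that $\partial_t\phi(t_0,x_0) - H(\nabla_x\phi(t_0,x_0)) = 0$, verifying the subsolution inequality.

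With all three items in place, the uniqueness of viscosity solutions gives $v = u$, yielding the Hopf representation \eqref{e.hopf}. The hard point throughout is the subsolution check: the supersolution property is essentially automatic from the envelope structure, while the subsolution property requires the concavity of the initial condition and the more delicate selection argument above, which is the classical content of the proof of the Hopf formula.
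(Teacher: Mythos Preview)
The paper does not give its own proof of this proposition: it is quoted as \cite[Theorem~3.13]{HJbook} and followed only by a heuristic paragraph (Fenchel--Moreau duality for $\psi$, the remark that affine initial data evolve as affine solutions, and an informal interchange of the semigroup with the infimum). Your outline of the initial condition via Fenchel--Moreau and of the supersolution property via stability of infima is correct and matches that heuristic.

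Your subsolution argument, however, has a genuine gap. From the fact that $(\partial_t\phi,\nabla_x\phi)(t_0,x_0)$ lies in the closed convex hull of $\{(H(y),y):y\ \text{a minimizer at}\ (t_0,x_0)\}$ you cannot in general select a single minimizer $y_0$ with $\nabla_x\phi(t_0,x_0)=y_0$ and $\partial_t\phi(t_0,x_0)=H(y_0)$: a point of the convex hull need not be extreme, and without convexity of $H$ this matters. Concretely, take $n=1$, $\psi(x)=-|x|$ and $H(y)=-y^2$; then $\psi_*\equiv 0$ on $[-1,1]$ and $v(t,x)=-|x|-t$. At $(t_0,x_0)=(1,0)$ the minimizers are $y=\pm 1$, and the test function $\phi(t,x)=-t$ gives $(\partial_t\phi,\nabla_x\phi)=(-1,0)$, which lies in the convex hull of $\{(-1,-1),(-1,1)\}$ but corresponds to no minimizer (indeed $H(0)=0\neq -1$). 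The subsolution inequality still holds here, since $-1-H(0)=-1\le 0$, but not by the mechanism you describe. Since you ultimately defer this step to \cite[Theorem~3.13]{HJbook} anyway, your sketch is acceptable as a pointer to the literature, but the specific selection argument you give for the subsolution test is incorrect and should be removed.
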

Intuitively, the Hopf formula follows from the following observations. At time $t = 0$, $u(0,\cdot)$ coincides with the infimum of the family of values taken by affine functions that upper bound $\psi$ (this is Fenchel-Moreau duality),
\begin{e} \label{e.easy fm}
    u(0,x) = \inf_{y \in \R^n} \left\{ x \cdot y - \psi_*(y) \right\}.
\end{e}
For every $y \in \R^n$ the viscosity solution of \eqref{e.hj interp} with the affine initial condition $x \mapsto x \cdot y - \psi_*(y)$ is 
\begin{e*}
    (t,x) \mapsto x \cdot y - \psi_*(y) +tH(y).
\end{e*}
Provided that we can interchange the semigroup of \eqref{e.hj interp} with the infimum in \eqref{e.easy fm}, we obtain \eqref{e.hopf}. 

Notice that in Proposition~\ref{p.hopf}, the geodesics used to define the notion of concavity and the notion of derivative (the symbol $\nabla$ in \eqref{e.hj interp}) are the same, both notions are defined using straight lines in $\R^n$.

For the rest of this subsection we assume that $D = 1$. Let $\psi : \mcl P_1(\R_+) \to \R$ be a nondecreasing Lipschitz function, we have explained that \eqref{e.hj} could be reformulated into \eqref{e.hj frechet}. Now observe that in \eqref{e.hj frechet} the notion of derivative (the symbol $\nabla$ in \eqref{e.hj frechet}) is the Fréchet derivative in $L^2$, it is defined using straight lines in $L^2$. Hence, as might be expected, it was shown in \cite[Theorem~4.6~(3)]{chen2022hamilton} that the Hopf representation is also available for viscosity solutions of \eqref{e.hj}, provided that the function $\msf q \mapsto \psi(\Omega^{-1} \msf q)$ is concave. In this case, $\msf f$ the unique solution of \eqref{e.hj frechet}, satisfies 
\begin{e} \label{e.hopfinf}
    \msf f(t,\msf q ) = \inf_{\msf p \in \mcl Q_\infty(\R_+)} \left\{ \langle\msf q, \msf p \rangle_{L^2} - \psi_{\circ}(\msf p) +t \int \xi(\msf p)  \right\},
\end{e}
where 
\begin{e*}
    \psi_\circ(\msf p) = \inf_{\msf q \in \mcl Q_2(\R_+)} \left\{ \langle \msf q, \msf p \rangle_{L^2} - \psi(\Omega^{-1} \msf q) \right\}.
\end{e*}
Even thought \eqref{e.hopfinf} is the natural generalization of \eqref{e.hopf}, this formula seems to be of limited use in the context of spin glasses since for the function $\psi$ defined by \eqref{e.def_psi}, the function $\msf q \mapsto \psi(\Omega^{-1} q)$ is not concave (nor convex) in general as shown in \cite[Section~6]{mourrat2020nonconvex}.

We now assume that $\psi$ is given by \eqref{e.def_psi}. According to \cite{auffinger2015parisi}, the function $\mu \mapsto \psi(\mu)$ is concave on $\mcl P_2(\R_+)$. When considering \eqref{e.hj} in this case, the geodesics used to define concavity and derivatives are not the same. We use straight lines in $\mcl P_2(\R_+)$ to define concavity and transport geodesics in $\mcl P_2(\R_+)$ (that is straight lines in $L^2$) to define the symbol $\partial_\mu$ appearing in \eqref{e.hj}. Nonetheless, the formula \eqref{e.main scalar} seems to be the natural adaptation of \eqref{e.hopfinf} under this slightly unusual setup. Indeed, let $f$ be the viscosity solution of \eqref{e.hj}, recall that according to Lemma~\ref{l.fenchel moreau}, we have 
\begin{e*}
    f(0,\mu) = \inf_{\chi \in \mcl X} \left\{ \int \chi \d\mu - \psi_*(\chi) \right\},
\end{e*}
where $\mcl X$ denotes the set of Lipschitz functions $\chi : \R_+ \to \R$ which are nondecreasing and convex. Theorem~\ref{t.main scalar} states that 
\begin{e} \label{e.aa}
    f(t,\mu) = \inf_{\chi \in \mcl X} \left\{ \int S_t \chi \d\mu - \psi_*(\chi) \right\},
\end{e}
and according to Theorem~\ref{t.viscosity with linear initial condition}, $(t,\mu) \mapsto \int S_t \chi \d\mu - \psi_*(\chi)$ is the unique viscosity solution of 
\begin{e*}
    \begin{cases}
        \partial_t u - \int \xi(\partial_\mu u) \d\mu  = 0 \textrm{ on } (0,+\infty) \times \mcl P_2(\R_+) \\
        u(0,\mu) = \int \chi \d\mu - \psi_*(\chi).
    \end{cases}
\end{e*}
Therefore \eqref{e.aa} and \eqref{e.hopfinf} have the same structure; those two formulas express the viscosity solution as the infimum of a family of viscosity solutions started at an affine initial condition. The only difference is that for \eqref{e.hopfinf} the relevant linear initial conditions are of the form $\msf q \mapsto \langle \msf p , \msf q \rangle_{L^2}$ when for \eqref{e.aa} the relevant linear initial conditions are of the form $\mu \mapsto \int \chi \d\mu$. This is why we refer to \eqref{e.main scalar} as a Hopf-like formula.

\subsection{Some conjectures}

In this subsection, we do not assume that $D = 1$. When $\xi$ is not assumed to be convex on $S^D_+$, the Parisi formula breaks down and there is no known generalization of Theorem~\ref{t.parisi}. In \cite{mourrat2020nonconvex, mourrat2019parisi, mourrat2020free} it was conjectured that results such as Theorem~\ref{t.limit.free.energy} should generalize to the setting where the function $\xi$ is not assumed to be convex on $S^D_+$. Namely, it should hold under \ref{i.assume_1_product_proba} and \ref{i.assume_3_power_series} only that the free energy converges as $N \to +\infty$ to the viscosity solution of \eqref{e.hj}. When $\xi$ is not assumed to be convex the Hopf-Lax representation for the viscosity solutions of \eqref{e.hj} is not available and variational representations for the limit free energy such as \eqref{e.lim.FN} are proven to be false (see \cite[Section~6]{mourrat2020nonconvex}). 

As explained in the previous subsection, for Hamilton-Jacobi equations with possibly nonconvex nonlinearity, if the initial condition is concave, a variational representation is available for the viscosity solution, and it seems that the un-inverted formula \eqref{e.main scalar} can be interpreted as an instance of such a variational representation. One of the main ingredients for proving \eqref{e.main scalar} and coincidently also for the Hopf representation to hold is Fenchel-Moreau duality (which manifests through Lemma~\ref{l.fenchel moreau} here). If one wishes to generalize \eqref{e.main scalar} to models with $D > 1$, a seemingly crucial step is thus to generalize Lemma~\ref{l.fenchel moreau}. The attentive reader will notice that a version of Lemma~\ref{l.fenchel moreau} follows from Theorem~\ref{t.extension}, this allows us to write $\psi$ as an infimum over Lipschitz functions $\chi : S^D_+ \to \R$. But this version of Lemma~\ref{l.fenchel moreau} is not strong enough for our purpose. Indeed, to guarantee that \eqref{e.small hj diag} below is well-posed, one needs that the initial condition $\chi$ is nondecreasing \cite[Theorem~1.2~(2)]{chen2023viscosity}. The argument we have used in the proof of Lemma~\ref{l.fenchel moreau} to show that the infimum could in fact be taken over nondecreasing $\chi$'s doesn't seem to be easily applicable when $D > 1$. Indeed, we have used crucially the fact that the set of surjective paths $\msf q \in \mcl Q(\R_+)$ is dense in $\mcl Q(\R_+)$. When $D > 1$, the set of surjective paths $\msf q \in \mcl Q(S^D_+)$ is empty, as any surjective function $[0,1) \to S^D_+$ must be non-monotonous. In order to obtain a suitable generalization of Lemma~\ref{l.fenchel moreau} to the case $D > 1$, we thus make the following additional assumptions.
\begin{enumerate}[start=4,label={\rm (H\arabic*)}]
    \item \label{i.assume_4_product_proba}
    There exists compactly supported probability measures $\pi_1, \dots, \pi_D \in \mcl P_\infty(\R)$ such that $P_1 = \pi_1 \otimes \dots \otimes \pi_D$.
    \item \label{i.assume_5_diag}
    The function $\xi$ only depends on the diagonal coefficients of its argument. That is, there exists a function $\bar \xi : \R^D \to \R$ such that $\xi(A) = \bar \xi((A_{dd})_{1 \leq d \leq D})$.
\end{enumerate}
Thanks to \ref{i.assume_5_diag} we can encode the limit free energy with a partial differential equation on $\mcl P^\upa_2(\R^D_+)$ rather than $\mcl P^\upa_2(S^D_+)$, and using \ref{i.assume_4_product_proba} we can adapt Lemma~\ref{l.fenchel moreau}. Let us explain this in more details.

The map $x \mapsto \textrm{diag}(x)$ which maps each vector $x \in \R^D_+$ to the matrix in $S^D_+$ whose diagonal coefficients are $x_1,\dots,x_D$, defines an injection from $\R^D_+$ to $S^D_+$. In particular each $\mu \in \mcl P^\upa_1(\R^D_+)$ can be interpreted as a probability measure in $\mcl P^\upa_1(S^D_+)$. This means that the quantity $\bar F_N(t,\mu)$ is also defined when $\mu \in \mcl P_1(\R^D_+)$. We can then easily adapt the arguments of \cite[Section~8]{chenmourrat2023cavity} to prove the following theorem.
\begin{theorem}[limit free energy for convex diagonal models] \label{t.limit.free.energy.diag}
    Assume that \ref{i.assume_1_product_proba}, \ref{i.assume_2_convexity}, \ref{i.assume_3_power_series} and \ref{i.assume_5_diag} hold, then for every $t \ge 0$ and $\mu \in \mcl P^\upa_1(\R^D_+)$, we have
    \begin{equation}  \label{e.lim.FN.diag}
        \lim_{N \to +\infty} \bar F_N(t,\mu) = \sup_{\nu \in \mcl P^\upa_\infty(\R^D_+), \nu \ge \mu} \Ll( \psi(\nu) - t \E \Ll[ \bar \xi^* \Ll( \frac{X_\nu - X_\mu}{t} \Rr)  \Rr]  \Rr) .
    \end{equation}
    Moreover, denoting by $f(t,\mu)$ the expression above, we have that $f : \R_+ \times \mcl P^\upa_2(\R^D_+) \to \R$ solves the Hamilton-Jacobi equation
    \begin{equation} \label{e.hj.diag}
        \Ll\{
        \begin{aligned}  
        & \dr_t f - \int \bar \xi(\dr_\mu f) \, \d \mu = 0 \quad & \text{ on } \R_+ \times \mcl P^\upa_2(\R^D_+), \\
        & f(0,\cdot) = \psi & \text{ on } \mcl P^\upa_2(\R^D_+). 
        \end{aligned}
        \Rr.
    \end{equation}
\end{theorem}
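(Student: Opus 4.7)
The plan is to adapt the arguments of \cite[Section~8]{chenmourrat2023cavity}, which establish Theorem~\ref{t.limit.free.energy}, to the diagonal setting \ref{i.assume_5_diag}. The starting observation is that under \ref{i.assume_5_diag} the Gaussian covariance factors through the diagonal: for every $\sigma,\tau \in \R^{D\times N}$, $\xi(\sigma\tau^\tsp/N) = \bar\xi((\sigma_d \cdot \tau_d/N)_{d=1}^D)$, so $H_N$ only ``sees'' the $D$ scalar overlaps $\sigma_d \cdot \tau_d/N$. When $\mu \in \mcl P^\upa_1(\R^D_+)$ is lifted to $\mcl P^\upa_1(S^D_+)$ via $\mathrm{diag}_\#$, the supporting matrices $q_k$ are diagonal, and the cascade covariance $2q_{\alpha \wedge \beta}\cdot \sigma\tau^\tsp = 2\sum_d (q_{\alpha \wedge \beta})_{dd}(\sigma_d \cdot \tau_d)$ involves only diagonal overlaps as well. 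Hence every object entering the cavity computation can be reindexed by $\mcl P^\upa_1(\R^D_+)$ rather than $\mcl P^\upa_1(S^D_+)$.

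The first step is to rerun the cavity/interpolation machinery of \cite[Section~8]{chenmourrat2023cavity} in this restricted setting to obtain convergence of $\bar F_N(t,\mu)$ as $N \to +\infty$ towards a function $f : \R_+ \times \mcl P^\upa_2(\R^D_+) \to \R$ that is a viscosity solution of \eqref{e.hj.diag}. Via the isometry $\Omega : \mu \mapsto \msf q_\mu$, the equation \eqref{e.hj.diag} is equivalent in the Fr\'echet formulation to
\begin{equation*}
  \begin{cases}
    \dr_t \msf f - \int_0^1 \bar\xi(\nabla \msf f(u))\,\d u = 0 \text{ on } (0,+\infty) \times \mcl Q_2(\R^D_+), \\
    \msf f(0,\msf q) = \psi(\Omega^{-1}\msf q),
  \end{cases}
\end{equation*}
where $\mcl Q_2(\R^D_+) \subset L^2([0,1),\R^D)$ is the closed convex cone of nondecreasing paths. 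Well-posedness follows from the same arguments as in \cite{chen2022hamilton}, and the Hamiltonian is convex since $\bar\xi$ inherits convexity on $\R^D_+$ from \ref{i.assume_2_convexity} and \ref{i.assume_5_diag}.

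The second step is to read off the variational formula from the Hopf--Lax representation for convex Hamiltonians, as in \cite[Theorem~4.6~(2)]{chen2022hamilton}. This directly yields
\begin{equation*}
  f(t,\mu) = \sup_{\nu \in \mcl P^\upa_\infty(\R^D_+), \nu \geq \mu}\left\{ \psi(\nu) - t\, \E\left[\bar\xi^*\left(\frac{X_\nu - X_\mu}{t}\right)\right]\right\},
\end{equation*}
which is the exact analogue of \eqref{e.lim.FN} on the smaller cone $\mcl Q_2(\R^D_+)$; the constraint $\nu \geq \mu$ again arises from the Hopf--Lax minimization being restricted to that cone. The concavity of $\psi$ along straight lines and its monotonicity under the order $\leq$ are preserved by restriction to diagonal measures.

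The main obstacle is bookkeeping rather than conceptual novelty: one must verify that every lemma from \cite[Section~8]{chenmourrat2023cavity} and from the well-posedness theory of \cite{chen2022hamilton} continues to apply after replacing $S^D_+$ with $\R^D_+$. This is eased by the fact that $\R^D_+$ is polyhedral, so the order on $\mcl Q_2(\R^D_+)$ reduces to a coordinatewise pointwise condition after integration, and no new analytical ingredient is required.
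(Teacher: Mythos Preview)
Your proposal is correct and takes essentially the same approach as the paper, which does not give a detailed proof but simply states that one can ``easily adapt the arguments of \cite[Section~8]{chenmourrat2023cavity}''. Your sketch makes explicit what that adaptation entails: the diagonal assumption forces all overlaps and cascade covariances to live in $\R^D_+$, so the cavity computation, the well-posedness theory of \cite{chen2022hamilton}, and the Hopf--Lax representation all transfer to the cone $\mcl Q_2(\R^D_+)$ with only notational changes.
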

Let $\psi$ be the functional defined in \eqref{e.def_psi} and let $\psi_d : \mcl P_1(\R_+) \to \R$ denote the function obtained from \eqref{e.def_psi} when $D = 1$ and the reference probability measure is $\pi_d$. Given $\mu \in \mcl P(\R^D_+)$ let $\mu_1, \dots,\mu_D \in \mcl P(\R_+)$ denote its marginals. Assumption \ref{i.assume_4_product_proba} yields the following simplification, for every $\mu \in \mcl P(\R^D_+)$,
\begin{e} \label{e.decouple}
    \psi(\mu) = \sum_{d = 1}^D \psi_d(\mu_d).
\end{e}
Let $\mcl X_D$ denote the set of functions $\chi : \R_+^D \to \R$ which satisfy 
\begin{e} \label{e.def mcl X_D}
    \chi(x) = \sum_{d = 1}^D \chi_d(x_d),
\end{e}
with $\chi_1,\dots,\chi_D : \R_+ \to \R$ $1$-Lipschitz, nondecreasing and convex. The set $\mcl X_D$ is contained in the set of Lipschitz, nondecreasing and convex functions on $\R^D_+$. In particular if we combine \eqref{e.decouple} with Lemma~\ref{l.fenchel moreau} applied to each $\psi_d$, we obtain the following for every $\mu \in \mcl P_1^\upa(\R^D_+)$,
\begin{e} \label{e.dual vector}
    \psi(\mu) = \inf_{\chi \in \mcl X_D} \left\{ \int \chi \d\mu - \psi_*(\chi) \right\},
\end{e}
where, as usual, 
\begin{e}
    \psi_*(\chi) = \inf_{\mu \in \mcl P_1^\upa(\R^D_+)} \left\{ \int \chi \d\mu - \psi(\mu) \right\}.
\end{e}
With \eqref{e.dual vector} in mind, we can expect that the mechanism leading to \eqref{e.aa} in the case $D = 1$, leads to the following result when $D \geq 1$.
\begin{conjecture} \label{c.hopf}
    Assume that \ref{i.assume_1_product_proba}, \ref{i.assume_3_power_series}, \ref{i.assume_4_product_proba} and \ref{i.assume_5_diag} hold but with $\bar \xi$ possibly non-convex on $\R^D_+$, then $f$ the unique viscosity solution of \eqref{e.hj.diag} satisfies 
    \begin{e*}
        f(t,\mu) = \inf_{\chi \in \mcl X_D} \left\{ \int S_t\chi \d\mu - \psi_*(\chi) \right\},
    \end{e*}
    where $S_t \chi(x) = \sup_{y \in \R^D_+} \left\{ x \cdot y - \chi^*(y) + t\bar \xi(y) \right\}$ is the unique viscosity solution of 
    \begin{e*}
        \begin{cases}
            \partial_t u - \bar \xi(\nabla u) = 0 \textrm{ on } (0,+\infty) \times \R^D_+ \\
            u(0,\cdot) = \chi.
        \end{cases}
    \end{e*}
\end{conjecture}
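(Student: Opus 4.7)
The strategy is to adapt the argument of Theorem~\ref{t.main scalar} by replacing $\R_+$ with $\R^D_+$, crucially exploiting \eqref{e.decouple}, which factorises $\psi$ into a sum of scalar contributions. Because \ref{i.assume_2_convexity} is \emph{not} assumed in the conjecture, there is no variational representation for $f(t,\mu)$ to start from, and the sup-inf interchange driving Theorem~\ref{t.main scalar} is no longer accessible. The plan is instead to characterise the right-hand side of Conjecture~\ref{c.hopf} directly as a viscosity solution of \eqref{e.hj.diag} and conclude by the uniqueness and comparison results of \cite{chen2022hamilton}.

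\emph{Step 1 (Vector version of Theorem~\ref{t.viscosity with linear initial condition}).} I would first show that for every $\chi \in \mcl X_D$, the functional $(t,\msf q) \mapsto \int S_t \chi(\msf q)$ is the unique viscosity solution in $\mcl V(\mcl Q_2(\R^D_+))$ of
\begin{e*}
\begin{cases}
\partial_t V - \int \bar\xi(\nabla V) = 0 & \text{on } (0,+\infty) \times \mcl Q_2(\R^D_+),\\
V(0,\msf q) = \int \chi(\msf q) & \text{on } \mcl Q_2(\R^D_+).
\end{cases}
\end{e*}
Since each $\chi_d$ is convex and nondecreasing, so is $\chi$, and by \cite[Proposition~6.3]{chen2023viscosity} the Hopf representation $S_t\chi(x) = \sup_{y \in \R^D_+}\{ x \cdot y - \chi^*(y) + t\bar\xi(y)\}$ holds \emph{even though} $\bar\xi$ may be non-convex; in particular $S_t \chi$ is itself convex and nondecreasing on $\R^D_+$. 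A vector analog of Proposition~\ref{p.characterization of nondecreasing linear} (whose scalar proof transfers coordinatewise thanks to the product structure of the order on $\R^D_+$) then yields that $\msf q \mapsto \int S_t\chi(\msf q)$ is $\mcl Q_2(\R^D_+)^*$-nondecreasing, and the finite-dimensional reduction of Propositions~\ref{p.visco of hj approx extended}--\ref{p.viscosity_restriction}, combined with \cite[Theorem~4.6]{chen2022hamilton}, transfers essentially verbatim.

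\emph{Step 2 (Upper bound by comparison).} Set $u_\chi(t,\mu) = \int S_t\chi \d\mu - \psi_*(\chi)$ for $\chi \in \mcl X_D$. By Step 1 (shifted by the constant $-\psi_*(\chi)$), $u_\chi$ is a viscosity solution of \eqref{e.hj.diag} with initial condition $\mu \mapsto \int \chi \d\mu - \psi_*(\chi) \geq \psi(\mu)$, the inequality being the definition of $\psi_*$. The comparison principle for \eqref{e.hj.diag} proved in \cite{chen2022hamilton} yields $u_\chi \geq f$, and taking the infimum over $\mcl X_D$ gives $\inf_{\chi \in \mcl X_D} u_\chi \geq f$.

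\emph{Step 3 (The hard direction).} The reverse inequality $\inf_{\chi \in \mcl X_D} u_\chi \leq f$ is the main obstacle. At $t=0$, \eqref{e.dual vector} (obtained by combining \eqref{e.decouple} with Lemma~\ref{l.fenchel moreau} applied to each $\psi_d$) already gives $\inf_{\chi \in \mcl X_D} u_\chi(0,\mu) = \psi(\mu)$, so equality holds initially; but transferring it to $t > 0$ amounts to making the semigroup of \eqref{e.hj.diag} commute with the infimum over $\chi$. Since the infimum of viscosity solutions is only automatically a supersolution, the cleanest route appears to be to prove that $\underline f := \inf_{\chi \in \mcl X_D} u_\chi$ is also a viscosity subsolution of \eqref{e.hj.diag}, which combined with $\underline f(0,\cdot) = \psi$ and the comparison principle would force $\underline f \leq f$. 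A plausible approach is to approximate $\bar\xi$ by the strictly convex Hamiltonians $\bar\xi + \alpha |\cdot|^2$, invoke the diagonal analog of Theorem~\ref{t.main} to obtain the desired representation at $\alpha > 0$, and pass to the limit $\alpha \to 0$ using a continuity argument on the PDE side in the spirit of Proposition~\ref{p.lipschitz hopf}. The delicate point is that the identification with the free energy is lost when $\bar\xi$ is non-convex, so this continuity must be established purely at the level of viscosity solutions of \eqref{e.hj.diag}, and this is where I expect the essential difficulty of the conjecture to lie.
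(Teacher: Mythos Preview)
The statement you are attempting to prove is labeled as a \emph{Conjecture} in the paper, and the paper provides no proof for it; Section~\ref{s.interpretation} explicitly presents it as speculative, derived by analogy with the scalar case. There is therefore no paper proof to compare against, and your plan should be read as an attack on an open problem rather than a reconstruction.

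Your Steps~1 and~2 are sound and track exactly the heuristic the paper sketches: Step~1 is the vector analog of Theorem~\ref{t.viscosity with linear initial condition}, and Step~2 is the standard comparison argument. You are also right that the infimum of viscosity solutions is only a supersolution in general, so Step~3 is where the content lies.

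However, your proposed route through Step~3 has a concrete flaw that precedes the continuity issue you flag. You suggest approximating $\bar\xi$ by $\bar\xi + \alpha|\cdot|^2$ and invoking the diagonal analog of Theorem~\ref{t.main} at each $\alpha > 0$. But Theorem~\ref{t.main} (and the underlying Theorem~\ref{t.limit.free.energy.diag}) requires convexity of the Hamiltonian; if $\bar\xi$ is genuinely non-convex on $\R^D_+$, then $\bar\xi + \alpha|\cdot|^2$ is \emph{not} convex for small $\alpha$, and becomes convex only once $\alpha$ dominates the negative curvature of $\bar\xi$. So either you cannot invoke the convex theory at the approximating level, or you can only do so for $\alpha$ bounded away from zero, and the limit $\alpha \to 0$ never reaches the non-convex $\bar\xi$. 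The perturbation argument of Section~\ref{s.convex} works precisely because the base $\xi$ is already convex and the perturbation only upgrades convexity to strict convexity; that mechanism does not transfer to the non-convex setting. This is why the paper leaves the statement as a conjecture: the sup--inf interchange that drives the convex case has no known substitute here, and your approximation does not supply one.
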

Combining Conjecture~\ref{c.hopf} with \cite[Conjecture~2.6]{mourrat2019parisi}, we obtain the following conjectural variational formula for the limit free energy of nonconvex models.
\begin{conjecture} \label{c.var formula}
    Assume that \ref{i.assume_1_product_proba}, \ref{i.assume_3_power_series}, \ref{i.assume_4_product_proba} and \ref{i.assume_5_diag} hold but with $\bar \xi$ possibly non-convex on $\R^D_+$, then
    \begin{e*}
        \lim_{N \to +\infty } \bar F_N(t,\delta_0) = \inf_{\chi \in \mcl X_D} \left\{ S_t\chi(0) - \psi_*(\chi) \right\},
    \end{e*}
    where $\mcl X_D$ is defined in \eqref{e.def mcl X_D}, $S_t \chi(0) = \sup_{y \in \R^D_+} \left\{- \chi^*(y) + t \bar \xi(y) \right\}$ is the value at $(t,0)$ of the unique viscosity solution of 
    \begin{e} \label{e.small hj diag}
        \begin{cases}
            \partial_t u - \bar \xi(\nabla u) = 0 \textrm{ on } (0,+\infty) \times \R^D_+ \\
            u(0,\cdot) = \chi,
        \end{cases}
    \end{e}
    and where 
    \begin{e}
        \psi_*(\chi) = \inf_{\mu \in \mcl P_1^\upa(\R^D_+)} \left\{ \int \chi \d\mu - \psi(\mu) \right\}.
    \end{e}
\end{conjecture}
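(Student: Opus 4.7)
The plan is to deduce Conjecture~\ref{c.var formula} by combining two ingredients: (a) \cite[Conjecture~2.6]{mourrat2019parisi}, which asserts $\lim_{N \to +\infty} \bar F_N(t,\delta_0) = f(t,\delta_0)$, where $f$ is the unique viscosity solution of \eqref{e.hj.diag}; and (b) Conjecture~\ref{c.hopf}, the Hopf-like representation of $f$. Once both are in hand, specializing (b) to $\mu = \delta_0$ gives $\int S_t \chi \, \d\delta_0 = S_t\chi(0)$, and the claimed variational formula follows. The substantive task is thus to establish Conjecture~\ref{c.hopf}, for which I would follow the template of Theorem~\ref{t.main scalar}.

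The first step is to upgrade the Fenchel-Moreau representation for $\psi$ to the vector setting: one wants $\psi(\mu) = \inf_{\chi \in \mcl X_D}\{\int \chi \d\mu - \psi_*(\chi)\}$ for $\mu \in \mcl P_1^\upa(\R^D_+)$. Under \ref{i.assume_4_product_proba} the reference measure factorizes, so the cavity identity \eqref{e.decouple} writes $\psi(\mu) = \sum_{d=1}^D \psi_d(\mu_d)$, where each scalar $\psi_d$ is $1$-Lipschitz, nondecreasing and concave. Applying Lemma~\ref{l.fenchel moreau} to each $\psi_d$ and summing produces a representation whose test functions are precisely the separable, coordinate-wise convex, nondecreasing $1$-Lipschitz maps — that is, elements of $\mcl X_D$.

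The second step is to verify that, for every $\chi \in \mcl X_D$, the map $(t,\mu) \mapsto \int S_t\chi \d\mu$ solves \eqref{e.hj.diag} with initial condition $\mu \mapsto \int \chi \d\mu$. Since each $\chi_d$ is convex and nondecreasing and $\bar\xi$ depends only on the diagonal, $\chi^* = \sum_d \chi_d^*$ decouples and the Hopf representation for $S_t\chi$ shows that $S_t\chi$ is convex and nondecreasing on $\R^D_+$. The finite-dimensional approximation scheme of \cite[Section~4]{chen2022hamilton} combined with the vector analogues of Propositions~\ref{p. int S_t chi is nondecreasing}, \ref{p.visco of hj approx extended} and \ref{p.viscosity_restriction} should yield the identification, at least when $\bar\xi$ is convex. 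The third step is to interchange the infimum in the Fenchel-Moreau representation with the semigroup of \eqref{e.hj.diag}: one inequality follows from monotonicity of the semigroup, the other from a Sion minimax argument applied to the enriched Parisi formula \eqref{e.lim.FN.diag} (using compactness of the unit ball $\mcl X_D^0$ in local uniform convergence and joint upper semicontinuity of the transport cost), exactly as in the proof of Theorem~\ref{t.main scalar}.

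The main obstacle is the nonconvex regime: when $\bar\xi$ is not convex on $\R^D_+$, the viscosity theory developed in \cite{chen2022hamilton} no longer applies, so even the meaning of \eqref{e.hj.diag} and the well-posedness of a comparison-friendly viscosity solution are essentially open. The Hopf-Lax formula used above to produce $S_t\chi$ as a pointwise viscosity solution on $\R^D_+$ does survive (since on finite-dimensional Euclidean space one can allow nonconvex Hamiltonians in the Hopf representation provided the initial datum is convex), but lifting this to the Wasserstein level requires developing a notion of viscosity solution for transport-derivative Hamilton-Jacobi equations with nonconvex $H$. Moreover, \cite[Conjecture~2.6]{mourrat2019parisi} is itself open in this generality. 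A natural strategy is to exploit the product and diagonal structure from \ref{i.assume_4_product_proba}-\ref{i.assume_5_diag} to reduce as much as possible to the scalar objects, but the coupling introduced by $\bar\xi$ prevents a purely coordinate-wise argument and careful treatment of $\partial_\mu$ in the nonconvex regime is the core difficulty.
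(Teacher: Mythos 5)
Your proposal correctly identifies the paper's derivation: Conjecture~\ref{c.var formula} is not proved in the paper but is stated as a conditional consequence of combining Conjecture~\ref{c.hopf} (the Hopf-like representation for the solution of \eqref{e.hj.diag}) with \cite[Conjecture~2.6]{mourrat2019parisi} (the identification of $\lim \bar F_N$ with that solution), and then specializing to $\mu = \delta_0$. Your subsequent sketch of how one might attack Conjecture~\ref{c.hopf} — factorizing $\psi$ via \ref{i.assume_4_product_proba} and \eqref{e.decouple}, applying Lemma~\ref{l.fenchel moreau} coordinate-wise to obtain the $\mcl X_D$-representation \eqref{e.dual vector}, then mimicking the finite-dimensional-approximation and Sion-minimax structure of the scalar proof — matches the heuristics the paper gives just before stating Conjecture~\ref{c.hopf}, and your identification of the core obstacles (lack of a viscosity theory for nonconvex $\bar\xi$ in the transport-derivative setting, and the openness of \cite[Conjecture~2.6]{mourrat2019parisi} itself) is accurate.
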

\newpage 

\small
\bibliographystyle{plain}
\bibliography{uninverting}

\end{document}